\documentclass[12pt]{article}%
   
\usepackage{amsmath,enumerate}
\usepackage{amsfonts}
\usepackage{amssymb}
\usepackage{comment}
\usepackage{color}

\setlength{\topmargin}{-.5in}
\setlength{\textheight}{9in}
\setlength{\oddsidemargin}{.125in}
\setlength{\textwidth}{6.25in} 

\setcounter{MaxMatrixCols}{30}
\newtheorem{theorem}{Theorem}[section]

\newtheorem{claim}[theorem]{Claim}

\newtheorem{lemma}[theorem]{Lemma}

\newtheorem{problem}[theorem]{Problem}
\newtheorem{proposition}[theorem]{Proposition}

\newenvironment{proof}[1][Proof]{\noindent\textbf{#1.} }
{\hfill \ \rule{0.5em}{0.5em}}
\newcommand{\sat}{\textup{sat}}

\title{Few $T$ copies in $H$-saturated graphs}
\author{J\"urgen Kritschgau\thanks{Department of Mathematics, Iowa State University, \texttt{jkritsch@iastate.edu}} \and Abhishek Methuku \thanks{Central European University, Budapest and \'Ecole Polytechnique F\'ed\'erale de Lausanne, \texttt{abhishekmethuku@gmail.com}. Research is supported by the National Research, Development and Innovation Office - NKFIH under the grant K 116769.} \and Michael Tait\thanks{Department of Mathematical Sciences, Carnegie Mellon University, \texttt{mtait@cmu.edu}. Research is supported in part by NSF grant DMS-1606350.} \and Craig Timmons \thanks{Department of Mathematics and Statistics, California State University, Sacramento.
Research is supported by a grant from the Simons Foundation \#359419.}}


\begin{document}

\maketitle
\begin{abstract}
A graph is $F$-saturated if it is $F$-free but the addition of any edge creates a copy of $F$. In this paper we study the quantity $\mathrm{sat}(n, H, F)$ which denotes the minimum number of copies of $H$ that an $F$-saturated graph on $n$ vertices may contain. This parameter is a natural saturation analogue of Alon and Shikhelman's generalized Tur\'an problem, and letting $H = K_2$ recovers the well-studied saturation function. We provide a first investigation into this general function focusing on the cases where the host graph is either $K_s$ or $C_k$-saturated. Some representative interesting behavior is:

\begin{enumerate}[(a)]
\item For any natural number $m$, there are graphs $H$ and $F$ such that $\sat(n, H, F) = \Theta(n^m)$.
\item For many pairs $k$ and $l$, we show $\mathrm{sat}(n, C_l, C_k) = 0$. In particular, we prove that there exists a triangle-free $C_k$-saturated graph on $n$ vertices for any $k > 4$ and large enough $n$. 
\item $\mathrm{sat}(n, K_3, K_4) = n-2$, $\mathrm{sat}(n, C_4, K_4) \sim \frac{n^2}{2}$, and $\mathrm{sat}(n, C_6, K_5) \sim n^3$. 

\end{enumerate}

We discuss several intriguing problems which 
remain unsolved.
\end{abstract}

\section{Introduction}

Given graphs $G$ and $F$, the graph $G$ is \emph{$F$-free} if $G$ does not contain $F$ as a 
subgraph.  We write $\textup{ex}(n , F)$
for the \emph{Tur\'{a}n number} of $F$ which is the maximum number of edges in an 
$n$-vertex $F$-free graph. This function is a fundamental object in combinatorics, c.f. \cite{sidorenko} for a survey.  An important generalization of the Tur\'{a}n number 
was introduced by Alon and Shikhelman \cite{as}.  For a graph $H$, 
write 
\[
\textup{ex}(n , H , F)
\]
 for the maximum number of copies of $H$ in an $F$-free $n$-vertex graph.
 Taking $H = K_2$ gives back the ordinary Tur\'{a}n number 
 $\textup{ex}(n , F)$.  
 The function $\textup{ex}(n , H , F)$ 
 has been studied by numerous researchers
 (\cite{alonks, egms, GGMV, gerbnerpalmer, GS, maqiu, samotij} to name a few).
 
 Next we discuss graph saturation.  
 The graph $G$ is \emph{$F$-saturated} if $G$ is $F$-free, but adding any nonedge 
to $G$ creates at least one copy of $F$. 
The \emph{saturation number} of $F$, written $
\textup{sat}(n , F)$,
is the minimum number of edges in an $n$-vertex $F$-saturated graph. Saturation in graphs has been studied extensively since the 1960s, c.f. \cite{sat survey} for a survey.  
Generalizing the function $\textup{sat}(n ,F)$, we write 
\[
\textup{sat}(n , H  , F)
\]
for the minimum number of copies of $H$ in an $n$-vertex $F$-saturated graph.    
Note that 
\[
\textup{sat}(n , K_2 , F) = \textup{sat}(n  , F).
\]
If $F$ is a subgraph of $H$, then $\textup{sat}(n , H  , F) = 0$.
This is because an $n$-vertex $F$-free graph with $\textup{ex}(n , F)$ edges 
is $F$-saturated, and has no copies of $H$.  

One of the first results in this area is a theorem 
of Erd\H{o}s, Hajnal, and Moon \cite{ehm} which determines
the saturation number of any complete graph.

\begin{theorem}[Erd\H{o}s, Hajnal, Moon]\label{theorem ehm}
If $s \geq 3$ is an integer, then
\[
\textup{sat}(n , K_s) = (s-2) ( n - s +2) + \binom{s-2}{2}.
\]
Furthermore, if $G$ is an $n$-vertex $K_s$-saturated
graph with $\textup{sat}(n , K_s)$ edges,
then $G$ is isomorphic to the join of a clique 
with $s-2$ vertices and an independent set with $n-s+2$ vertices.
\end{theorem}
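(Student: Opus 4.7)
I would take $G^{\ast}=K_{s-2}\vee\overline{K_{n-s+2}}$, the join of a clique $A$ on $s-2$ vertices with an independent set $B$ on $n-s+2$ vertices. A direct count yields $e(G^{\ast})=\binom{s-2}{2}+(s-2)(n-s+2)$. Any clique in $G^{\ast}$ can contain at most one vertex of $B$, so $\omega(G^{\ast})=s-1$ and $G^{\ast}$ is $K_s$-free; every non-edge of $G^{\ast}$ sits inside $B$, and adjoining all of $A$ to it produces a $K_s$, establishing $K_s$-saturation.

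\textbf{Lower bound and uniqueness.} I would argue by induction on $n+s$, with base cases $s=2$ (trivial) and $s=3$, where a short connectivity argument (a non-edge between distinct components creates no triangle) forces any $K_3$-saturated graph to be connected, yielding $e\geq n-1$. For the inductive step, let $G$ be $K_s$-saturated on $n\geq s\geq 4$ vertices. The first step is the standard bound $\delta(G)\geq s-2$: for a non-universal vertex $v$ with a non-neighbor $u$, the $K_s$ forced by inserting $uv$ delivers a copy of $K_{s-2}$ inside $N(u)\cap N(v)$. The argument then splits into two principal reductions.

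If $G$ has a universal vertex $z$, then $G-z$ is $K_{s-1}$-saturated on $n-1$ vertices: joining $z$ to any $K_{s-1}$ in $G-z$ would yield a $K_s$ in $G$, ruling these out; and for any non-edge $xy$ of $G-z$, the $K_s$ forced in $G+xy$ must contain $z$ (otherwise $G-z$ would already contain a $K_{s-1}$), so removing $z$ leaves the needed $K_{s-1}$ in $G-z+xy$. Applying the inductive hypothesis with parameters $(n-1,s-1)$ together with the algebraic identity $(s-3)(n-s+2)+\binom{s-3}{2}+(n-1)=(s-2)(n-s+2)+\binom{s-2}{2}$ closes this case. If instead some vertex $v$ has $\deg(v)=s-2$, then $|N(u)\cap N(v)|\geq s-2=|N(v)|$ for every non-neighbor $u$ of $v$ forces $N(v)$ itself to be a $K_{s-2}$ and every non-neighbor of $v$ to be joined to all of $N(v)$; since $v$ has only $s-2$ neighbors, no $K_s$ formed by inserting a non-edge in $G-v$ can involve $v$, so $G-v$ is $K_s$-saturated on $n-1$ vertices and induction yields the bound. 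Tracking equality through these reductions forces $N(v)$ to induce $K_{s-2}$ dominating the remaining vertices and $G-v$ to be extremal, so $G\cong K_{s-2}\vee\overline{K_{n-s+2}}$, giving uniqueness.

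The main obstacle is the remaining case in which $G$ has no universal vertex and $\delta(G)\geq s-1$: the degree-sum bound $e(G)\geq(s-1)n/2$ falls short of the target once $n>(s-1)(s-2)/(s-3)$, so a finer argument is required. To address it, I would fix a maximum clique $Q$ of $G$, which has size exactly $s-1$ since any non-edge of $G$ extends to such a clique, and carefully account for the edges inside $Q$, between $Q$ and $V(G)\setminus Q$, and inside $V(G)\setminus Q$: each outside vertex has at most $s-2$ neighbors in $Q$ by $K_s$-freeness but degree at least $s-1$ overall, and combining this deficit with the saturation property applied to non-edges inside $V(G)\setminus Q$ should supply enough additional edges to recover the required inequality.
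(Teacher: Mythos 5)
The paper does not prove this theorem itself; it quotes it as a classical result of Erd\H{o}s, Hajnal, and Moon, so there is no internal proof to compare against. Judged on its own, your proposal is correct and essentially complete in two of its three cases, but the third contains an acknowledged gap that you have not closed.

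Your upper bound is right, and your two reductions are sound. When a universal vertex $z$ exists, $G-z$ is indeed $K_{s-1}$-saturated and the arithmetic checks out. When some $v$ has $\deg(v)=s-2$, your observation that $N(v)$ is a clique of size $s-2$ dominating all of $V(G)\setminus N(v)$ in fact forces $G\cong K_{s-2}+\overline{K}_{n-s+2}$ outright (the paper isolates exactly this as Lemma~\ref{min degree s-2}), so no induction is even needed there.

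The genuine gap is the remaining case: $\delta(G)\ge s-1$ and no universal vertex. You correctly note the degree-sum bound is insufficient, and you propose to fix a maximum clique $Q$ with $|Q|=s-1$ and account for the deficiency $f(w)=s-2-d_Q(w)$ of each $w\in R:=V\setminus Q$. Writing out that accounting, the target inequality $e(G)\ge(s-2)(n-s+2)+\binom{s-2}{2}$ is equivalent to
\[
e(G[R])\ \ge\ \sum_{w\in R} f(w).
\]
But all that $\delta(G)\ge s-1$ gives directly is $d_R(w)\ge f(w)+1$, hence $2e(G[R])\ge \sum_w f(w)+|R|$, which is strictly weaker than what is needed whenever $\sum_w f(w)>|R|$ — and that is a real possibility since $f(w)$ can be as large as $s-2$. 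Pulling in the $K_{s-2}$'s guaranteed in $N(w)\cap N(q)$ for a non-edge $wq$ with $q\in Q$ only re-derives $d_R(w)\ge f(w)$, which you already have. So the final sentence of your proposal (``should supply enough additional edges'') is a hope rather than an argument, and the approach as stated does not recover the bound. Closing this case requires a sharper structural idea — for instance a more careful double-count of the $(s-2)$-cliques guaranteed in common neighborhoods of non-edges, or an induction that removes a minimum-degree vertex and controls the edges that must be added to re-saturate $G-v$ — none of which is present in the current sketch.

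The uniqueness ``tracking equality'' sentences in the two completed cases are fine, but uniqueness in the third case is also left open for the same reason.
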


While Theorem \ref{theorem ehm} solves the saturation problem
for complete graphs, many other cases have 
since been studied, including cycles.  

Our main results fall into two categories: counting graphs in $K_s$-saturated graphs or counting graphs in $C_k$-saturated graphs.
We first discuss counting graphs in $K_s$-saturated graphs 
as this line of research is a natural 
generalization of Theorem \ref{theorem ehm} in 
the spirit of Alon and Shikhelman \cite{as}.

\subsection{Clique saturated graphs}

\begin{theorem}\label{small improvement}
Let $s > r \geq 3$ be integers.  There is a constant $n_{s,r}$ such that for all $n \geq n_{s,r}$, 
\begin{eqnarray*}
\max \left\{
\frac{ \binom{s-2}{r-1} }{r-1} n - 2 \binom{s-2}{r-1}  , 
\left( \frac{ \binom{s-2}{r-1} + \binom{s-3}{r-2} }{r} \right) n   \right\}
& \leq & \textup{sat}(n , K_r , K_s ) \\
& \leq & (n - s + 2) \binom{s - 2}{r-1} + \binom{s-2}{r}.
\end{eqnarray*}
\end{theorem}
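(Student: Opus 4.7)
For the upper bound I would use the Erd\H{o}s--Hajnal--Moon extremal graph $G^{*} = K_{s-2} \vee \overline{K_{n-s+2}}$, which is $K_s$-saturated by Theorem~\ref{theorem ehm}. Since its independent part carries no edges, every $K_r$ in $G^{*}$ uses at most one vertex from that part, so the copies split into the $\binom{s-2}{r}$ copies lying inside the clique and the $(n-s+2)\binom{s-2}{r-1}$ copies using one independent-set vertex together with an $(r-1)$-subset of the clique, yielding exactly the stated upper bound.

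For the lower bounds, let $G$ be any $K_s$-saturated graph on $n$ vertices. The structural engine is the following consequence of saturation: for every vertex $v$ with a non-neighbor $u$, the fact that adding $uv$ must create a $K_s$ forces an $(s-2)$-clique $K_v \subseteq N(u) \cap N(v)$. Since $G$ has at most $s-2$ universal vertices (else a $K_s$ appears directly), all but $O(1)$ vertices admit such a $K_v$. Consequently $\{v\} \cup K_v$ is a $K_{s-1}$ through $v$ and, setting $f(v) := \#\{K_r \subseteq G : v \in V(K_r)\}$, one has $f(v) \ge \binom{s-2}{r-1}$; the identity $\sum_v f(v) = r \cdot \#K_r(G)$ already yields the naive bound $\#K_r(G) \ge \frac{n\binom{s-2}{r-1}}{r} - O(1)$, which is weaker than both claimed lower bounds.

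I would obtain the two improvements by splitting on $\delta(G)$. If $\delta(G) = s-2$, then any $v$ with $\deg(v) = s-2$ has $N(v)=K_v$, and saturation at each non-edge $vw$ forces $K_v \subseteq N(w)$, so $K_v$ is a clique of $s-2$ universal vertices; a direct count then gives $\#K_r(G) \ge (n-s+2)\binom{s-2}{r-1} + \binom{s-2}{r}$, which dominates both claimed bounds for $n$ large. If $\delta(G) \ge s-1$, then every $v$ has a neighbor $b \notin K_v$, and since $b$ must be non-adjacent to some $a \in K_v$ (else $\{v\} \cup K_v \cup \{b\}$ is a $K_s$), applying saturation to the non-edge $ab$ produces an $(s-2)$-clique $L \subseteq N(a) \cap N(b)$; a short analysis (split according to whether $|L \setminus N(v)|$ is $0$, $1$, or $\ge 2$, iterating the saturation argument once in the last case via any $\ell \in L \setminus N(v)$, and noting that otherwise $K_v \cup (L \setminus N(v))$ forms a forbidden $K_s$) furnishes a second $(s-2)$-clique in $N(v)$ meeting $K_v$ in at most $s-3$ vertices. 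Inclusion--exclusion then gives $f(v) \ge \binom{s-2}{r-1} + \binom{s-3}{r-2}$, and summing and dividing by $r$ proves bound B. Bound A exceeds bound B only when $s > r^2 - 2r + 3$, so it only needs a separate argument in that regime: there, a sharper accounting of $\sum_v f(v) = r \cdot \#K_r(G)$ shows that apart from $O(1)$ exceptional copies, each $K_r$ contains some vertex $v \in V(K_r)$ with $K_r \setminus \{v\} \not\subseteq K_v$ and is therefore counted at most $r-1$ times, giving the improvement by a factor of $r/(r-1)$ and the additive $-2\binom{s-2}{r-1}$ correction.

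The main obstacle is the second-clique construction inside the $\delta(G) \ge s-1$ case together with the exception bookkeeping for bound A: in both one must handle the degenerate configurations where iterated saturation around $v$ keeps returning the same $(s-2)$-clique $K_v$, or where many vertices share a common $K_{s-1}$, and the fix requires either a careful sub-case analysis (as above) or selecting the witness cliques $K_v$ globally rather than independently at each vertex.
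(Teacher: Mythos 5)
Your upper bound and your scheme for bound B are in the spirit of the paper: the paper also splits on $\delta(G)$ and shows that, once $\delta(G)\geq s-1$, every neighborhood $N(v)$ contains at least two distinct $(s-2)$-cliques (their Lemma 3.2) and hence at least $\binom{s-2}{r-1}+\binom{s-3}{r-2}$ copies of $K_{r-1}$. But your route to the second clique is broken. You split on $|L\setminus N(v)|\in\{0,1,\geq 2\}$ and claim that in the remaining case $K_v\cup(L\setminus N(v))$ is a forbidden $K_s$; when $|L\setminus N(v)|=1$ that set has only $s-1$ vertices, so no contradiction arises, and in the case $|L\setminus N(v)|\geq 2$ you give no reason that re-applying saturation at $v\ell$ produces anything other than $K_v$ again, so the iteration need not terminate or succeed. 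The paper's proof of Lemma 3.2 instead exploits that, if $K_v$ is the unique $(s-2)$-clique in $N(v)$, then every $u\in N_2(v)$ must see exactly $K_v$, forcing $N_2(v)$ to be independent, and then derives a contradiction by producing a clique with two vertices in $N_2(v)$ — a genuinely different structure than your local iteration.

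The bigger gap is bound A. You assert that ``apart from $O(1)$ exceptional copies, each $K_r$ contains some vertex $v\in V(K_r)$ with $K_r\setminus\{v\}\not\subseteq K_v$,'' and deduce that each copy is counted at most $r-1$ times. You give no argument for this claim, and it is not clear it is even true without more care about how the witness cliques $K_v$ are chosen — you yourself flag this as the main obstacle. The paper proves bound A by an entirely different mechanism: if $e(G)\geq\textup{ex}(n,K_r)+\alpha_{r,s}$, then by Erd\H{o}s (for $r=3$) or Mubayi (for $r\geq4$) the graph already contains $\geq\binom{s-2}{r-1}n$ copies of $K_r$; otherwise $G$ has $\geq\binom{n}{2}-\left(1-\frac{1}{r-1}\right)\frac{n^2}{2}-\alpha_{r,s}$ nonedges, each of which supplies $2\binom{s-2}{r-1}$ copies of $K_r$ through its two endpoints via the witness $K_{s-2}$ in the common neighborhood, with each copy counted at most $n-s+2$ times. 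This supersaturation-or-count-nonedges dichotomy is what produces both the $\frac{1}{r-1}$ coefficient and the $-2\binom{s-2}{r-1}$ correction, and it has no analogue in your sketch. You would need to either import that approach or supply a real proof of your counting claim; as written, bound A is unproved.
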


\bigskip

The 
join of a clique with $s-2$ vertices 
and an independent set with $n - s +2$ vertices gives 
the upper bound of Theorem \ref{small improvement}.  
This is the same graph that is the unique extremal 
example for Theorem \ref{theorem ehm}.
For $r \geq \sqrt{s-1} + 1$, the second entry in the maximum 
gives the better lower bound.  When $r$ is fixed and $s$ tends to 
infinity, the lower bound is roughly $\frac{1}{r-1} \binom{s-2}{r-1}n$ which means 
there is a gap of a factor of $r-1$ between the lower and upper bounds.

Theorem \ref{small improvement} shows that $\textup{sat}( n , K_r , K_s) = \Theta(n)$ for $n \geq s > r \geq 3$, 
but it does not give an asymptotic formula.  
When $s = 4$ and $r = 3$, Theorem \ref{small improvement} implies
  \[
  \frac{2n}{3} \leq \textup{sat}(n , K_3 , K_4 ) \leq n - 2.
  \]
 In this special case we can determine $\textup{sat}(n , K_3 , K_4)$ exactly.  
  
\begin{theorem}
\label{K4triangletheorem}
For $n \geq 7$, 
\[
\textup{sat}(n , K_3 , K_4) = n -2.
\]
Furthermore, 
the only $n$-vertex $K_4$-saturated graph with $n - 2$ triangles is 
the join of an edge and an independent set with $n-2$ vertices.
\end{theorem}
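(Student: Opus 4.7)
My plan is to establish the matching upper and lower bounds and argue uniqueness along the way, via a case analysis on $\delta(G)$. For the upper bound, take $G_0 := K_2 \vee \overline{K}_{n-2}$, the join of an edge $\{u_1,u_2\}$ with an independent set $\{w_1,\ldots,w_{n-2}\}$. This graph is $K_4$-free (any four vertices contain two non-adjacent $w_i$'s), $K_4$-saturated (adding any $w_iw_j$ completes a $K_4$ on $\{u_1,u_2,w_i,w_j\}$), and has exactly the $n-2$ triangles $\{u_1,u_2,w_i\}$.

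For the lower bound, let $G$ be a $K_4$-saturated graph on $n\geq 7$ vertices. First, $\delta(G)\geq 2$: any incident non-edge of a vertex of degree $\leq 1$ would fail to admit the pair of common neighbors joined by an edge that saturation demands. If $\delta(G)=2$, let $v$ have neighborhood $\{u_1,u_2\}$; for each $w\in V(G)\setminus\{v,u_1,u_2\}$, the $K_4$ witnessing the non-edge $vw$ must lie on $N(v)\cap N(w)\subseteq\{u_1,u_2\}$, forcing $u_1u_2\in E(G)$ and $w\in N(u_1)\cap N(u_2)$. Thus $u_1,u_2$ are adjacent universal vertices, and $K_4$-freeness then forces $V(G)\setminus\{u_1,u_2\}$ to be independent, so $G\cong G_0$ with $t(G)=n-2$.

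If $\delta(G)\geq 3$, the goal is to prove the strict inequality $t(G)\geq n-1$, which together with the previous case will settle uniqueness. When $G$ has a universal vertex $v$, the graph $G-v$ is triangle-free (by $K_4$-freeness of $G$) and $K_3$-saturated (any non-edge in $G-v$ must complete a $K_4$ in $G$, which through the universal $v$ means a triangle in $G-v$). Theorem~\ref{theorem ehm} yields $e(G-v)\geq\sat(n-1,K_3)=n-2$ with $K_{1,n-2}$ as the unique extremal graph; every triangle of $G$ passes through $v$, so $t(G)=e(G-v)\geq n-2$, and equality would force $G\cong G_0$ and thus $\delta(G)=2$, a contradiction. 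Hence $t(G)\geq n-1$.

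The main obstacle is the last subcase, $\delta(G)\geq 3$ with no universal vertex. My plan is to establish the local estimate $e(G[N(v)])\geq d(v)-1$ for every $v$, via a case analysis on the saturation $K_4$-witness of each non-edge inside $N(v)$. If the witness contains $v$ or another vertex of $N(v)$, it supplies a common neighbor of the non-edge endpoints inside $G[N(v)]$ (the degenerate subcase where both witness vertices lie in $N(v)$ but $v$ does not is ruled out because it would already produce a $K_4$ in $G$ through $v$). Otherwise both witness vertices are non-neighbors of $v$, producing two triangles of $G$ outside $v$'s closed neighborhood that can be charged to the global triangle count. Combining the local estimates yields $3t(G)=\sum_v e(G[N(v)])\geq 2e(G)-n\geq 2(2n-3)-n=3n-6$, using $e(G)\geq\sat(n,K_4)=2n-3$ from Theorem~\ref{theorem ehm}; the absence of a universal vertex forces at least one of these inequalities to be strict, upgrading to $t(G)\geq n-1$ and completing the proof together with the $\delta(G)=2$ case.
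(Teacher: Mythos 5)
Your overall architecture --- matching upper bound from $K_2 + \overline{K}_{n-2}$, then a case split on $\delta(G)$ --- is a genuinely different and more ``local'' route than the paper's. The paper's proof does not look at $\delta(G)$ at all: it introduces \emph{triangle blocks} (chains of edge-sharing triangles) and \emph{triangle clusters} (maximal unions of blocks sharing $\geq 2$ vertices), shows that two distinct clusters overlap in at most one vertex, proves that any cluster containing a tight path of three triangles $abc,bcd,cde$ already forces $>n-2$ triangles, reduces the remaining case to ``book'' clusters sharing a common base edge, and finally counts triangle ``tips'' to get at least $2n$ triangles when $G\neq K_2 + \overline{K}_{n-2}$. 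Your $\delta=2$ case is correct and essentially reproduces the $s=4$ case of the paper's Lemma~\ref{min degree s-2}, and the reduction through a universal vertex to $\sat(n-1,K_3)$ with its uniqueness (via Theorem~\ref{theorem ehm}) is clean and correct.

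The gap is in the final sub-case, $\delta(G)\geq 3$ with no universal vertex, which is where the work has to happen: the local estimate $e(G[N(v)]) \geq d(v)-1$ is \emph{not} established. Your case analysis on the $K_4$-witness of a non-edge $xy$ inside $N(v)$ correctly observes that if one witness vertex lies in $N(v)\cup\{v\}$ you get a common neighbor of $x,y$ inside $N(v)$, and that both witness vertices lying in $N(v)$ is impossible. But the remaining case --- both witness vertices $a,b$ entirely outside $N[v]$ --- is not ruled out, and you do not actually convert it into the claimed inequality. The suggestion to ``charge'' the triangles $xab,\,yab$ for the missing local edge is too vague to be checkable: those triangles are not ``outside $v$'s closed neighborhood'' (they contain $x$ or $y$ from $N(v)$), they are already counted three times each in the identity $3t(G) = \sum_u e(G[N(u)])$, and different vertices $v$ may try to charge the same triangles, so the accounting is not set up to avoid overcounting. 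Without either proving $e(G[N(v)]) \ge d(v)-1$ outright for $K_4$-saturated graphs with $\delta\geq 3$ and no universal vertex, or making the charging scheme precise with a global double-counting that handles the defective non-edges uniformly, this step does not close. (Once that estimate is in place, the rest does work: $e(G)>2n-3$ by the EHM uniqueness since $\delta\geq 3$, so $e(G)\geq 2n-2$, hence $3t(G)\geq 2e(G)-n\geq 3n-4$, giving $t(G)\geq n-1$ by integrality.)
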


K\'{a}szonyi and Tuza \cite{kt} proved that for any graph $F$, there 
is a constant $C$, depending only on $F$, for which 
\begin{equation}\label{kt inequality}
\textup{sat}(n , F) < C n.
\end{equation}
We can use the same construction that gives (\ref{kt inequality}) to 
prove that $\textup{sat}(n , K_r , F)$ is also at most $Cn$.

\begin{proposition}\label{general ub}
Let $n \geq 1$ and $r \geq 2$ be integers.  For any graph $F$,  
there is a constant $C = C(r,F)$ such that 
\[
\textup{sat}(n , K_r , F ) < Cn.
\]
\end{proposition}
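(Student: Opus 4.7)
The plan is to apply the same K\'aszonyi-Tuza construction that establishes (\ref{kt inequality}) and bound the number of copies of $K_r$ it contains. For any graph $F$ and $n$ sufficiently large, their construction produces an $F$-saturated graph $G$ on $n$ vertices with the following structural feature: there is a ``core'' vertex set $S \subseteq V(G)$ of size $s_0 = s_0(F)$, depending only on $F$, and every vertex in $V(G) \setminus S$ has degree in $G$ at most $\Delta_0 = \Delta_0(F)$, also depending only on $F$. This is immediate from the construction, which attaches $n - s_0$ ``outer'' vertices to a fixed seed graph (determined by a minimum-degree vertex of $F$) using a pattern with a bounded number of incidences per outer vertex.

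Granted this structural property, I bound the number of copies of $K_r$ in $G$ by partitioning them according to whether they meet $V(G) \setminus S$. A copy of $K_r$ contained entirely in $S$ contributes at most $\binom{s_0}{r}$, which is a constant. Any other copy of $K_r$ contains some vertex $v \in V(G) \setminus S$, and the remaining $r-1$ vertices of the clique must lie in $N_G(v)$. Since $|N_G(v)| \leq \Delta_0$, there are at most $\binom{\Delta_0}{r-1}$ such completions for each fixed $v$, so summing over the at most $n$ choices of $v$ (and absorbing the mild overcount by a factor of at most $r$ into the constant) bounds the total number of such copies by $n \binom{\Delta_0}{r-1}$. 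Combining the two cases gives
\[
\textup{sat}(n, K_r, F) \;\leq\; \#\{K_r \subseteq G\} \;\leq\; \binom{s_0}{r} + n\binom{\Delta_0}{r-1} \;<\; Cn
\]
for an appropriate constant $C = C(r,F)$ once $n$ is large enough; enlarging $C$ absorbs the constant additive term for small $n$.

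The only step requiring work is verifying the bounded-degree-outside-the-core structure of the K\'aszonyi-Tuza graph, and this is read off directly from their construction. The counting step is then a routine degree argument, so I do not expect a serious obstacle. The one subtlety worth noting is that the core vertices themselves may have degree as large as $n-1$ (as in the $K_s$-saturated example $K_{s-2} + \overline{K}_{n-s+2}$ from Theorem \ref{theorem ehm}); what saves the count is precisely that every clique with an outer vertex is forced to live inside the bounded neighborhood of that vertex, so the large degrees in the core never get to multiply.
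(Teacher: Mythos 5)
Your proposal is correct and follows the same route as the paper: run the K\'aszonyi--Tuza construction, observe that every vertex outside the bounded core has degree bounded by a function of $F$ alone, and partition the copies of $K_r$ according to whether they use an outer vertex. The paper's proof merely instantiates the constants you leave implicit---the core is a clique on $|V(F)|-\alpha(F)-1$ vertices joined to an independent set $J$, and after greedily adding edges within $J$ until saturation each vertex of $J$ has at most $\alpha(F)-1$ neighbors in $J$, hence degree at most $|V(F)|-2$---and then carries out exactly the count you describe.
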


If one replaces $K_r$ with an arbitrary graph $H$ in Proposition \ref{general ub}, 
then it is not necessarily the case that $\textup{sat}(n , H , F) = O(n)$, as the following result shows.

\vspace{2mm}

Let $H$ be a noncomplete graph with at most $s$ vertices.  Fix 
some nonedge $h_1 h_2$ in $H$, and let $u_1$ and $u_2$ be a 
fixed pair of vertices in $K_s$.
Let $f_{h_1 , h_2 } (H)$ be the number of copies of $H$ in $K_s$ 
where the vertices $h_1$, $h_2$ of $H$ correspond to the vertices $u_1$, $u_2$, respectively, in the $K_s$.


 \begin{proposition}\label{quadratic lower bound}
 Let $s \geq 3$ be an integer. 
If $H$ contains at most $s$ vertices and $h_1 h_2$ is any nonedge of $H$, then 
\[
\textup{sat}(n , H , K_s) \geq f_{h_1 , h_2} (H) \left( \frac{ n^2 }{ 2 ( s -  1) } - \frac{n}{2} \right).
\] 
\end{proposition}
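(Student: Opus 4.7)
The plan is to combine two facts about a $K_s$-saturated graph $G$ on $n$ vertices. First, since $G$ is $K_s$-free, Tur\'an's theorem bounds $e(G) \leq \left(1-\frac{1}{s-1}\right)\frac{n^2}{2}$, so the number of nonedges of $G$ is at least
\[
\binom{n}{2} - e(G) \;\geq\; \frac{n^2}{2(s-1)} - \frac{n}{2}.
\]
Second, the saturation hypothesis supplies a copy of $K_s - e$ around each nonedge, and such a near-clique carries many copies of $H$.

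In detail, I would fix any nonedge $\{u,v\}$ of $G$. Since $G + uv$ contains a $K_s$, there must be an $(s-2)$-clique $W \subseteq N(u) \cap N(v)$, and $\{u,v\} \cup W$ spans a copy of $K_s - uv$ in $G$. Within this $K_s - uv$ one builds copies of $H$ by mapping $h_1 \mapsto u$, $h_2 \mapsto v$, and injecting the remaining vertices of $H$ arbitrarily into $W$; each such injection produces a valid copy because the only nonadjacent pair among the $s$ chosen vertices is $\{u,v\}$, and by hypothesis $h_1h_2$ is a nonedge of $H$, so no edge of $H$ is forced onto $\{u,v\}$. By the very definition of $f_{h_1,h_2}(H)$, exactly $f_{h_1,h_2}(H)$ copies of $H$ arise this way from each nonedge.

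The final and subtlest step is to verify that summing $f_{h_1,h_2}(H)$ over the at least $\frac{n^2}{2(s-1)} - \frac{n}{2}$ nonedges does not overcount. Each copy constructed from the nonedge $\{u,v\}$ sends the distinguished pair $\{h_1,h_2\}$ to $\{u,v\}$, so the nonedge can be recovered from the copy as the image of this pair; consequently distinct nonedges of $G$ give rise to disjoint families of copies, and multiplying yields the desired lower bound. This is where the main obstacle lies---controlling the multiplicity with which any given copy of $H$ is counted---but it is resolved automatically because the pair $\{h_1,h_2\}$ in any copy that we construct is uniquely tagged with the nonedge of $G$ that produced it.
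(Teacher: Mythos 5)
Your proof follows the same route as the paper: attach a near-clique $K_s$ minus an edge around each nonedge, harvest $f_{h_1,h_2}(H)$ copies of $H$ per nonedge, and feed Tur\'an's bound on $e(G)$ into the count of nonedges. The one spot where your write-up cuts a corner is the disjointness claim. You say the nonedge can be recovered from a copy ``as the image of the pair $\{h_1,h_2\}$,'' but a copy of $H$ is merely a subgraph of $G$, and it may admit several isomorphisms onto $H$ that carry $\{h_1,h_2\}$ to different vertex pairs (e.g.\ when $H$ has an automorphism moving the pair $\{h_1,h_2\}$ to some other nonedge of $H$). So ``the image of $\{h_1,h_2\}$'' is not intrinsically well-defined from the subgraph alone. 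The structural fact that actually rescues the claim --- and the argument the paper uses --- is this: a copy produced from the nonedge $\{u,v\}$ has its entire vertex set inside $\{u,v\} \cup W$, and within $\{u,v\} \cup W$ the only pair of vertices nonadjacent in $G$ is $\{u,v\}$. Hence if the same copy arose from a second nonedge $\{u',v'\}$, both $u'$ and $v'$ would lie in $\{u,v\} \cup W$ and be nonadjacent in $G$, forcing $\{u',v'\} = \{u,v\}$. With this sentence substituted in, your argument is complete and essentially coincides with the paper's.
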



\bigskip

Applying Proposition \ref{quadratic lower bound} with $H = C_4$, we can prove that $\textup{sat}( n , C_4 , K_4 ) \sim \frac{n^2}{2}$. More precisely, we have:

\begin{proposition}\label{c4 against k4}
Let $\delta>0$ be a real number.  There is an $n( \delta )$ such that for all $n \geq n (\delta )$, 
\[
\binom{n}{2} - n^{5/3 + \delta } \leq \textup{sat}( n , C_4 , K_4 ) 
\leq \binom{n-2}{2}.
\] 
\end{proposition}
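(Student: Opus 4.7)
For the upper bound I would take the Erd\H{o}s--Hajnal--Moon extremal graph $G^* := K_2 + \overline{K_{n-2}}$, which is $K_4$-saturated by Theorem \ref{theorem ehm}. Since no two vertices of the independent part are adjacent, every copy of $C_4$ in $G^*$ must use both vertices $u_1,u_2$ of the $K_2$ together with two vertices $v_i,v_j$ of the independent set, in the unique cycle $u_1 v_i u_2 v_j$; hence $G^*$ contains exactly $\binom{n-2}{2}$ copies of $C_4$.

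For the lower bound let $G$ be an arbitrary $n$-vertex $K_4$-saturated graph, set $m := e(G)$, and write $c(u,v) := |N(u) \cap N(v)|$. The saturation property provides, for every nonedge $\{u,v\}$, two vertices $x,y$ with $ux,uy,vx,vy,xy \in E(G)$ (otherwise $G+uv$ would not contain $K_4$); then $uxvy$ is a copy of $C_4$ in $G$ whose \emph{unique} nonedge diagonal is $\{u,v\}$, so different nonedges yield different witness $C_4$'s. This immediately gives
\[
\#\{C_4 \text{ in } G\} \;\geq\; \binom{n}{2} - m,
\]
which already implies the target bound whenever $m \leq n^{5/3+\delta}$.

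In the complementary regime $m > n^{5/3+\delta}$ I would fall back to a convexity estimate: Cauchy--Schwarz on the degree sequence gives $\sum_v \binom{d(v)}{2} \gtrsim m^2/n$, and since this sum equals $\sum_{\{u,v\}} c(u,v)$ the average codegree is $\gtrsim m^2/n^3$; Jensen's inequality applied to the convex function $\binom{x}{2}$ then produces
\[
\#\{C_4 \text{ in } G\} = \tfrac{1}{2}\sum_{\{u,v\}}\binom{c(u,v)}{2} \;\gtrsim\; n^{8/3 + 4\delta},
\]
which overwhelms $\binom{n}{2}$ for all sufficiently large $n$, so the desired inequality is trivial here. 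The argument hinges on a single sparse/dense dichotomy, and the only delicate point is to check that the threshold $m = n^{5/3+\delta}$ is indeed chosen so that the sparse witness bound and the dense moment bound both fit inside the allowed error term; beyond the saturation property itself, no structural information about $K_4$-saturated graphs is required.
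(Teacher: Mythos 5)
Your proof is correct and takes essentially the same route as the paper: the same extremal construction $K_2 + \overline{K}_{n-2}$ for the upper bound, and the same sparse/dense dichotomy at threshold $e(G) = n^{5/3+\delta}$ for the lower bound, with the ``unique nonedge diagonal'' witness in the sparse regime matching the argument of Proposition~\ref{quadratic lower bound}. The only cosmetic difference is that in the dense regime the paper cites Lemma~2.5 of~\cite{f} for the $C_4$-count $\gtrsim e(G)^4/n^4$, whereas you rederive that bound directly via Cauchy--Schwarz on degrees and Jensen on codegrees, which is precisely how that lemma is proved.
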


\bigskip

The graph obtained by taking a vertex of degree 
$n-1$ and putting $\lfloor \frac{n-1}{2} \rfloor$ 
disjoint edges in its neighborhood is 
$(K_4 - e)$-saturated and has no $C_4$, so 
\[
\textup{sat}( n , C_4 , K_4 - e ) = 0.
\]
Thus, even though $K_4 - e$ differs from 
$K_4$ by only one edge, the functions 
$\textup{sat}( n , C_4 , K_4 - e ) $
and 
$\textup{sat}( n , C_4 , K_4 )$ have very different 
behavior.

Proposition \ref{c4 against k4} and Theorem \ref{small improvement} 
gives examples of graphs $H$ and $F$ where 
\[
\textup{sat}(n,H,F) = \Theta (n^m)
\]
for
$m=1,2$.  In fact, for any integer $m   \geq 3$, there
are graphs $H$ and $F$ for which 
$\textup{sat}(n , H, F) = \Theta (n^m)$, as the following result shows.

\begin{theorem}\label{cr against ks with constants}
For $s\geq 5$ and $r \le 2s-4$, $$\sat(n,C_r,K_s)=\Theta (n^{\lfloor \frac{r}{2}\rfloor}).$$ More precisely,

$$\begin{cases}
\left(\frac{(s-2)_k}{4\cdot k}\right)\left(n^k-o(n^k)\right)\leq\sat(n,C_r,K_s)\leq \left( \frac{(s-2)_k}{2k}\right)\left(n^{ k}+o(n^{ k})\right) & \text{if }  2|r\\
\left(\frac{(s-2)_{k+1}(k-2)!}{r(r-3)(r)_k(s-1)}\right)\left(n^k-o(n^{k})\right)\leq\sat(n,C_r,K_s)\leq \left( \frac{(s-2)_{k+1}}{2}\right)\left(n^{ k}+o(n^{ k})\right) & \text{if }  2\not |r
\end{cases}$$ where $k=\lfloor \frac{r}{2}\rfloor$,
and $(m)_k = m (m-1) \cdots (m-k+1)$.
\end{theorem}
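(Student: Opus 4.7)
The plan is to split into an upper and a lower bound and handle even and odd $r$ in parallel; throughout, set $k=\lfloor r/2\rfloor$.

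For the upper bound, I would use the extremal $K_s$-saturated graph from Theorem \ref{theorem ehm}, namely $G^{*}=K_{s-2}\vee \overline{K_{n-s+2}}$, with clique side $A$ of size $s-2$ and independent side $B$ of size $n-s+2$. Since $B$ carries no edges, any $C_r\subseteq G^{*}$ contains at most $k$ vertices of $B$, and the dominant contribution comes from cycles using exactly $k$ of them. For even $r=2k$ such cycles alternate between $A$ and $B$, and a direct count of ordered $k$-tuples on each side, divided by the $4k$ automorphisms of $C_{2k}$, gives $\frac{(s-2)_{k}(n-s+2)_{k}}{2k}=\frac{(s-2)_{k}}{2k}n^{k}+o(n^{k})$. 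For odd $r=2k+1$ the cycles use $k+1$ vertices of $A$ and $k$ of $B$, with exactly one arc of two consecutive $A$-vertices; summing over the $2k+1$ admissible cyclic patterns (the independent $k$-subsets of $C_{2k+1}$) and dividing by $2(2k+1)$ yields $\frac{(s-2)_{k+1}(n-s+2)_{k}}{2}=\frac{(s-2)_{k+1}}{2}n^{k}+o(n^{k})$. Cycles using fewer than $k$ vertices from $B$ contribute only $O(n^{k-1})$ and are absorbed into the error.

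For the lower bound, let $G$ be any $n$-vertex $K_s$-saturated graph. I would exploit two structural facts. First, for every non-edge $uv$ of $G$ the vertices $u,v$ have at least $s-2$ common neighbors spanning a clique $W(uv)\cong K_{s-2}$, since $G+uv$ contains a $K_s$. Second, for any $K_{s-2}$-subgraph $W$ of $G$ the set $I_{W}:=N_{G}(W)\setminus W$ is independent in $G$, otherwise $W$ together with any edge in $I_W$ would form $K_s$. Together these facts produce many bipartite-like subgraphs $W\cup I_W$ inside which the alternating cycles from the upper bound can be recognized. Tur\'an's theorem gives at least $\binom{n}{2}/(s-1)+o(n^{2})$ non-edges, and assigning each non-edge to a witness $W$ yields $\sum_{W}\binom{|I_W|}{2}=\Omega(n^{2})$, where the sum ranges over $K_{s-2}$-subgraphs of $G$. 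A power-mean (or Jensen-type) step then boosts this to $\sum_{W}\binom{|I_W|}{k}=\Omega(n^{k})$, and counting alternating $C_{2k}$ (resp.\ nearly-alternating $C_{2k+1}$) inside each $W\cup I_W$ exactly as in the upper bound yields the desired $\Omega(n^{k})$ estimate.

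The main obstacle is controlling overcounting in the lower bound: a single alternating $C_{2k}$ can sit inside several distinct pairs $(W,I_W)$, so the sum over $W$ must be divided by an upper bound on this multiplicity. The factor-of-two loss between the upper constant $\frac{(s-2)_{k}}{2k}$ and the lower constant $\frac{(s-2)_{k}}{4k}$ is precisely the slack left by this step. The odd case is more delicate because the cycle contains one edge inside the clique side, forcing an extra vertex of $W$ and an extra combinatorial factor; this is what produces the more intricate constant $\frac{(s-2)_{k+1}(k-2)!}{r(r-3)(r)_k(s-1)}$.
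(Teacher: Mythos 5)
Your upper bound is essentially the paper's: count even cycles alternating (and odd cycles nearly alternating) between the clique side and the independent side of $K_{s-2}+\overline{K}_{n-s+2}$, noting that no $C_r$ can use more than $k=\lfloor r/2\rfloor$ independent-side vertices. That part is fine.

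The lower bound has a genuine gap at the ``power-mean (Jensen-type) step.'' You correctly note that for every $K_{s-2}$-subgraph $W$ the common neighborhood $I_W$ is independent, and that summing over witnesses of non-edges gives $\sum_W\binom{|I_W|}{2}=\Omega(n^2)$. But there is no control whatsoever on \emph{how many} witnesses $W$ appear nor on the distribution of the $|I_W|$, so convexity cannot upgrade a quadratic lower bound on $\sum_W\binom{|I_W|}{2}$ to a degree-$k$ lower bound on $\sum_W\binom{|I_W|}{k}$. For instance, $\Theta(n^2)$ witnesses each with $|I_W|=2$ already satisfy your hypothesis while making $\sum_W\binom{|I_W|}{k}=0$ for $k\ge 3$. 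Worse, even if the sum $\sum_W\binom{|I_W|}{k}$ were large, you are double-counting by a factor that is not constant: a single alternating $C_{2k}$ with clique-side $k$-set $B$ and independent-side $k$-set $A$ can sit inside every $K_{s-2}$ extending $B$ whose common neighborhood contains $A$, and there can be $\Theta(n^{s-2-k})$ such choices. Your claim that the overcounting costs only a factor of $2$ (the slack between $\frac{(s-2)_k}{2k}$ and $\frac{(s-2)_k}{4k}$) does not account for this.

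The paper avoids both problems by counting from independent $k$-sets of \emph{arbitrary} vertices (not through witnesses): any consecutive pair in such a set has $\geq s-2$ common neighbors by saturation, and since $k\leq s-2$ one can greedily pick $k$ distinct middle vertices, giving $(s-2)_k(k-1)!/2$ cycles per independent $k$-set with at most a factor-$2$ overcount. This is then fed into a dichotomy based on the Erd\H{o}s--Simonovits supersaturation bound for even cycles: if $e(G)=\Omega(n^2)$ there are already $\Theta(n^r)$ copies of $C_r$, and otherwise a Moon--Moser-type argument on the complement shows there are $\binom{n}{k}-o(n^k)$ independent $k$-sets. Neither the supersaturation dichotomy nor the Moon--Moser step appears in your sketch, and both are needed. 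The odd case in the paper is different again (a non-edge, a $K_{s-2}$ witness, and a $(k-2)$-set, stitched into a path and then closed up through the clique), which is why its constant looks so different; your sketch would need a wholly separate treatment for that case as well.
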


In the special case of $\textup{sat}(n , C_6 , K_5)$, Theorem  
\ref{cr against ks with constants} shows
$\textup{sat}(n , C_6, K_5) = \Theta (n^3)$.  With a more 
specialized argument, we can determine this function asymptotically.

\begin{theorem}\label{c6 against k5}
We have 
\[
(1 - o(1)) n^3 \leq \textup{sat}(n , C_6 , K_5) \leq 6 \binom{n-3}{3}.
\]
\end{theorem}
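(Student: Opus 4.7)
For the upper bound, let $G = K_3 \vee \overline{K_{n-3}}$; this is $K_5$-saturated by Theorem~\ref{theorem ehm}. Writing $A$ for the clique side and $B$ for the independent side, observe that since $B$ has no edges, any $C_6$ in $G$ uses at most three vertices of $B$, and since $|A|=3$, every $C_6$ in fact uses all of $A$ together with a 3-subset of $B$, alternating between the two parts around the cycle. The number of Hamiltonian cycles in $K_{3,3}$ equals $\frac{3!\,2!}{2}=6$, so the total count is $6\binom{n-3}{3}$ as claimed.

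For the lower bound, let $G$ be a $K_5$-saturated graph on $n$ vertices. For each triangle $T$ in $G$, define $S_T = \{v\in V(G)\setminus T : v\sim t \text{ for all } t\in T\}$. Because $G$ is $K_5$-free, $S_T$ is an independent set. For any 3-subset $\{u_1,u_2,u_3\}\subseteq S_T$ the bipartite graph between $T$ and $\{u_1, u_2, u_3\}$ is $K_{3,3}$; independence of $S_T$ forces every $C_6$ on these six vertices to alternate between the two sides, yielding exactly $6$ copies of $C_6$ per 3-subset. Letting $t(C)\in\{0,1,2\}$ denote the number of bipartition classes of a $C_6$ $C$ that form a triangle in $G$, double-counting gives
\[
\sum_T 6\binom{|S_T|}{3} \;=\; \#\{C_6 : t(C)=1\} + 2\cdot\#\{C_6 : t(C)=2\},
\]
so the number of $C_6$'s in $G$ is at least $\sum_T 6\binom{|S_T|}{3} - \#\{C_6 : t(C)=2\}$.

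The plan to reach $(1-o(1))n^3$ is to produce a \emph{dominant} triangle $T^*$ with $|S_{T^*}|\ge(1-o(1))n$, so that the first summand alone already exceeds $(1-o(1))n^3$. The easy case is when $G$ has a vertex $v$ of degree $3$: then $N(v)$ must itself be the triangle $T^*$ covering every non-neighbor of $v$, forcing $|S_{T^*}|\ge n-3$. In general, fix any vertex $v$, use $K_5$-saturation to assign each of the $n-1-d(v)$ non-neighbors of $v$ to a triangle $T\subseteq N(v)$ with $T\subseteq N(u)$, and note that each such $T$ absorbs exactly the non-neighbors lying in $S_T\setminus\{v\}$ (since $S_T$ is independent and $v\in S_T$); pigeonholing over the triangles of $N(v)$ produces a $T^*$ with $|S_{T^*}|$ as large as possible. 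To control the subtracted term, observe that every $C_6$ with $t(C)=2$ spans an octahedron $K_{2,2,2}$ subgraph (two disjoint triangles joined by the complement of a perfect matching) --- a $K_4$-free configuration whose total count in a $K_5$-saturated graph should be $o(n^3)$ by a generalized Tur\'an argument.

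The main obstacle will be the case when $|S_T|$ is spread over many triangles in $N(v)$ so that pigeonhole yields only a linear-in-$n$ but sub-$n$ value of $|S_{T^*}|$. To close this gap one likely needs to iterate the argument across carefully chosen vertices, or to invoke a stability result around the Erd\H{o}s--Hajnal--Moon extremal graph $K_3\vee\overline{K_{n-3}}$: any $K_5$-saturated graph far from this shape should contain many additional $C_6$'s through other mechanisms (for instance crossings between several disjoint dense subgraphs), compensating for the spread of $|S_T|$. Establishing the octahedron bound $\#\{C_6 : t(C)=2\} = o(n^3)$ is the other delicate ingredient, and is itself a nontrivial generalized Tur\'an estimate that should exploit the $K_5$-freeness of $G$ beyond what the triangle counting uses.
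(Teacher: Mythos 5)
Your upper bound is the same as the paper's and is correct. Your lower bound, however, has a genuine structural gap that you yourself flag but do not close, and the approach you sketch cannot actually be made to work as stated.

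The problem with the ``dominant triangle'' strategy is that there are $K_5$-saturated graphs in which no single triangle $T$ satisfies $|S_T| \geq (1-o(1))n$, yet which still have $\sim n^3$ copies of $C_6$. For instance, take the graph $W_5(m_1,1,m_3,m_4,1)$ described in Lemma~\ref{lemma for degree} (a blown-up $5$-cycle together with a dominating $K_2$), with $m_1=m_3=m_4\approx n/3$. A direct computation shows that every triangle $T$ there has $|S_T| \leq (2/3+o(1))n$, so your sum $\sum_T 6\binom{|S_T|}{3}$ cannot reach $(1-o(1))n^3$ from one dominant triangle, and attempting to sum over several triangles forces you to handle the over-counting, which is exactly the delicate $\#\{C : t(C)=2\}$ term you leave as a ``nontrivial generalized Tur\'an estimate.'' Neither the pigeonhole iteration nor the stability argument you allude to is given, so the lower bound is unproven.

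The paper avoids this entirely by switching the counting object from triangles to independent $3$-sets. The key lemma (Lemma~\ref{lemma for c6 in k5 sat}) shows that in any $K_5$-saturated graph, \emph{every} independent triple $\{x,y,z\}$ contributes at least $6$ distinct copies of $C_6$ -- even when $\{x,y,z\}$ has no triangle in its common neighborhood, via a separate case analysis depending on how a vertex $z$ meets a triangle in $N(x)\cap N(y)$. Distinctness across different independent triples is established by distinguishing whether the induced $6$-vertex subgraph is $K_6-K_3$ or not, rather than by controlling a ``$t(C)=2$'' term. Combined with a supersaturation argument (if $e(G)=\Omega(n^{3/2})$ the $C_6$ count is already $\Theta(n^3)$; otherwise $G$ is sparse and $i_3(G)=(1-o(1))\binom{n}{3}$ by a Goodman-type bound), this gives $(1-o(1))n^3$ immediately. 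If you want to follow your line, you would first need a lemma guaranteeing $6$ copies of $C_6$ per independent triple irrespective of whether a triangle exists in the common neighborhood; without that, the argument cannot handle graphs like $W_5$.
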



\subsection{Cycle saturated graphs}

Thus far, many of the results we have stated on 
$\textup{sat}(n , H, F)$ concern the cases when $F$ is a complete 
graph, and when $H$ is a cycle or a complete graph.  
The case when $H$ and $F$ are both cycles also 
is interesting.  
Some cases are fairly straightforward.  
A complete bipartite graph with large enough part sizes is $C_{2k+1}$-saturated and $C_{2t+1}$-free.  
Thus,
\[
\textup{sat}(n , C_{2t +1} , C_{2k+1} ) = 0
\]
for all $t , k \geq 1$ and $n \geq k+1$.  


  


This shows that minimizing the number of triangles in 
a $C_{k}$-saturated graph when $k$ is odd is trivial.  

Our next theorem shows that there exist $n$-vertex triangle-free graphs that are $C_k$-saturated for any even $k \ge 5$ when $n$ is large enough.    

\begin{theorem}\label{main theorem k3 against c2k}
For any integer $k \geq 5$, 
\[
\textup{sat}(n , K_3 , C_k ) = 0
\]
for all $n \geq 2k + 2$.  
\end{theorem}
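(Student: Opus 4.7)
The plan is to exhibit, for each $k\ge 5$ and each $n\ge 2k+2$, an explicit triangle-free $C_k$-saturated graph on $n$ vertices, with the construction and its verification splitting along the parity of $k$. For odd $k$ I would take $G=K_{t,n-t}$ with $t=(k+1)/2$: being bipartite, $G$ is automatically triangle-free and contains no odd cycle, so in particular no $C_k$. Every non-edge lies within one of the two parts, and for such a non-edge $uv$ in a part $L$, the hypothesis $n\ge 2k+2$ guarantees $|L|,|R|\ge (k+1)/2$---exactly enough room to interleave a $u$--$v$ path of length $k-1$ using $(k-3)/2$ additional vertices of $L$ and $(k-1)/2$ vertices of $R$, so that adding $uv$ closes the alternating path into a $C_k$.

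For even $k\ge 6$, complete bipartite graphs no longer work: when each side has at least $k/2$ vertices a $C_k$ already appears, while if the smaller side has fewer than $k/2$ vertices then no simple path of odd length $k-1$ can join two vertices on the same side. My plan is therefore to use a non-bipartite host built around an odd cycle. I would begin with a short odd ``core'' cycle (such as $C_{k-1}$) and attach the remaining $n-O(k)$ vertices as twin copies of suitably chosen core vertices, taken to have an independent common neighborhood so that no triangle is produced. The twins act as parity-shifting detours: when a non-edge $uv$ only admits simple paths of the wrong parity through the odd core alone, one routes through a twin to adjust the length to $k-1$. Verification would then proceed by case analysis on where the two endpoints of a non-edge lie---both in the core, one in the core and one a twin, or both twins---exhibiting an explicit length-$(k-1)$ path in each case and confirming that attaching twins introduces no new $C_k$.

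The hard part will be the even case, where I must simultaneously ensure (i) no $C_k$ is created when twins are attached to the odd core, and (ii) every non-edge---especially those strictly inside the core, where the length-$(k-1)$ path must necessarily detour through the twin set---admits a simple path of the required length. These requirements pull in opposite directions, since increasing ``connectivity'' via twins tends to introduce cycles of every length including $k$. The condition $n\ge 2k+2$ is precisely what guarantees enough twin copies to uniformly handle every case in the case analysis, while keeping the attachment pattern simple enough that no $C_k$ appears.
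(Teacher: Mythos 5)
Your odd-$k$ case is correct and is essentially the paper's Proposition \ref{H against odd cycle}: a complete bipartite graph with both parts of size at least $(k+1)/2$ is bipartite (hence triangle-free and $C_k$-free for odd $k$), and a short parity count shows every within-part non-edge closes to a $C_k$. You take an unbalanced bipartition where the paper takes a balanced one, but the verification is the same.

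The even-$k$ case is where the entire content of the theorem lies, and there your proposal is a plan rather than a proof. You correctly intuit the cloning mechanism, namely find a small $K_3$-free $C_k$-saturated base graph with a set of mutual twins on a small common neighborhood and duplicate them to reach any $n$; this is exactly Lemma \ref{duplicating 2} in the paper. But you never produce the base graph, and the candidate you sketch, a $(k-1)$-cycle with twin copies attached, already fails at $k=6$: take $C_5$ with vertices $0,1,2,3,4$ and attach arbitrarily many twins of vertex $0$ on neighborhood $\{1,4\}$. The non-edge $\{1,3\}$ never lies on a $6$-cycle in this host, because $2$ and $3$ remain degree-$2$ bottlenecks and any path from $3$ through $4$ next hits $0$ or a twin, whose only remaining neighbor is $1$, closing a cycle of length $\le 5$. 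So that graph is not $C_6$-saturated. The paper's base graphs $G(4j)$ and $G(4j+2)$ (Lemma \ref{lemma for G(4k)}) are considerably more structured: five vertex classes $\{v,u_1,u_2\}\cup X\cup Y\cup A\cup B$ with two hub vertices $u_1,u_2$, the class $Y$ engineered so that its vertices are mutual twins with common neighborhood $X$ of size $|Y|$ (so the hypothesis of Lemma \ref{duplicating 2} is met), and a roughly twenty-case hand check of every non-edge type to verify saturation and $C_{2k}$-freeness. In short, you have the right outline, but what is missing is precisely the hard part.
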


The case $\textup{sat}(n , K_3 , C_4)$ is not covered by Theorem \ref{main theorem k3 against c2k} and 
appears difficult.  This is discussed further in Section \ref{k3s in c4-sat}.  We know that 
$\textup{sat}(n , K_3 , C_4) = 0$ 
for $8 \leq n \leq 24$.    

\vspace{2mm}

In the case of $\textup{sat}(n , C_4 , C_k )$, we have the following result.  The method 
of the proof used for Theorem \ref{main theorem c4 against ck} is very different from the proof of Theorem \ref{main theorem k3 against c2k}.

\begin{theorem}\label{main theorem c4 against ck}
For all $n \geq 111$ and $k \in \{7,8,9,10 \}$, 
\[
\textup{sat}(n , C_4 , C_k ) = 0.
\]
\end{theorem}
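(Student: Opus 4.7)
The plan is to prove Theorem \ref{main theorem c4 against ck} by explicit construction: for each $k \in \{7,8,9,10\}$ I would exhibit an $n$-vertex graph $G_{n,k}$ that is simultaneously $C_4$-free and $C_k$-saturated, which forces $\textup{sat}(n, C_4, C_k) = 0$.

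The strategy I would pursue is to fix, for each $k$, a small ``core'' graph $H_k$ that is $C_4$-free (girth at least $5$), contains no $C_k$, and has paths of prescribed lengths $k-3$, $k-2$, and $k-1$ between appropriate pairs of its vertices. I would then attach $n - |V(H_k)|$ pendant (degree-one) vertices to carefully chosen ``hub'' vertices of $H_k$ to reach $n$ total vertices. Since pendants lie on no cycle, the extended graph $G$ automatically remains $C_4$-free and $C_k$-free; the only nontrivial task is to verify $C_k$-saturation.

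To check saturation, consider an arbitrary non-edge $uv$ of $G$ and produce a path of length exactly $k-1$ from $u$ to $v$. The argument splits into three cases according to whether $u$ and $v$ are both in the core, one is in the core and the other is a pendant, or both are pendants. If both are pendants attached to hubs $h$ and $h'$, then one needs a $u$--$v$ path of length $k-1$, which corresponds to a length-$(k-3)$ path from $h$ to $h'$ in $H_k$. In the mixed case, one needs a length-$(k-2)$ path from the hub to the core endpoint. In the core-core case, one needs a length-$(k-1)$ path directly in $H_k$. Thus the cores must be designed so that paths of each of these three lengths are available between the appropriate vertex pairs, and the hub set must be chosen so that this property extends correctly to all pendant non-edges.

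The main obstacle is engineering, for each $k \in \{7, 8, 9, 10\}$, a core $H_k$ that realizes all three path-length conditions simultaneously while admitting no $C_k$. This is a delicate balance: length-$(k-1)$ paths tend to be abundant, yet closing any one by an edge must not produce a $C_k$ inside $H_k$. An ad-hoc construction for each $k$ (likely on the order of a couple of dozen vertices, combining a long cycle or two with a few carefully placed chords), verified by hand or small computer search, seems the natural route, and different $k$ will almost certainly require different cores. The hypothesis $n \geq 111$ will then follow from summing the size of the largest core with the number of pendants needed at each hub to make every subcase of the saturation argument succeed; in particular, one needs enough pendants that the ``both pendants'' subcase is genuinely the binding constraint rather than a sporadic failure at small $n$.
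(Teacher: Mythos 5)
Your pendant-attachment scheme has a fatal structural problem when the number of pendants exceeds the number of available hubs, which it must for $n$ to grow. Suppose two pendant vertices $u_1$ and $u_2$ are attached to the \emph{same} hub $h$. Since each of $u_1,u_2$ has degree one, every $u_1$--$u_2$ path in $G$ passes through $h$ immediately on both ends, and is therefore exactly $u_1hu_2$ of length $2$. Adding the non-edge $u_1u_2$ then creates only a triangle, never a $C_k$ for $k\geq 7$. So $G$ is not $C_k$-saturated. To avoid this you would be forced to put at most one pendant per hub, capping $n$ at $2|V(H_k)|$ and defeating the goal of reaching all large $n$. The same obstruction resurfaces for pendants on distinct hubs $h,h'$: you then need a length-$(k-3)$ path from $h$ to $h'$ for \emph{every} ordered pair of hubs, and simultaneously length-$(k-2)$ and length-$(k-1)$ paths for the mixed and core cases — an overconstrained system.

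The paper sidesteps both problems by working with $C_k$-\emph{builders} (after Barefoot et al.): finite $C_k$-saturated, $C_4$-free graphs each with a distinguished vertex, glued at that single vertex. Because each glued-in piece is itself $C_k$-saturated, non-edges inside a piece are handled automatically; cross-piece non-edges are handled by the builder condition that every vertex admits suitable path lengths to the distinguished vertex. Since the pieces are blocks and $C_4$ is $2$-connected, the gluing cannot create new $C_4$'s. Taking two compatible builders of sizes $a+1$ and $b+1$ with $\gcd(a,b)=1$ then covers all $n\geq 1+ab$ (or close to it) by Chicken McNugget. The builder pieces have minimum degree at least $2$, which is precisely what is missing from your pendants. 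If you want to proceed constructively you should replace each pendant vertex with a whole $C_k$-saturated $C_4$-free gadget sharing a single cut vertex with the rest of the graph, i.e.\ rediscover the builder framework.
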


The lower bound on $n$ is not needed when $k \in \{7,8 \}$.  For these two cases, 
we have $\textup{sat}(n , C_4 , C_7) = 0$ for all $n \geq 8$, and 
$\textup{sat}(n , C_4 , C_8)=0$ for all $n \geq 9$.  It is likely that the lower bound on $n$ is a consequence of our proof technique and is not 
optimal for $k \in \{9, 10 \}$.   

\vspace{2mm}
Finally, simple bounds on $\textup{sat}(n , C_{2l} , C_{2k} )$ can obtained by the following construction. Let $C$ a clique on $2k - 2$ vertices and fix two vertices $x, z \in C$, and let $y_1, y_2, \ldots, y_{n-2k+2}$ be the vertices not in $C$. Let us add all the edges $xy_i$, $y_iz$ for $1 \le i \le n-2k+2$. This shows that
\[
\textup{sat}(n , C_{2l} , C_{2k} ) 
\leq 
\left\{
\begin{array}{cc}
0 & \mbox{if $l \geq k$,} \\
O_{k,l}(n) & \mbox{if $l < k$.}
\end{array}
\right.
\]

The table shown in Figure \ref{summary of results} gives a summary of our results with references. 
\begin{figure}\label{summary of results}
\begin{tabular}{l|l|l}
 Result& Hypothesis & Reference\\\hline
 $\textup{sat}(n,K_r,F) = O(n)$&$ n\geq 1,r\geq 2$& Proposition \ref{general ub} \\
  $\textup{sat}(n,H,K_s) = \Omega(n^2) $&$ H\neq K_s , |V(H)| = s\geq 3$&Proposition \ref{quadratic lower bound}\\
 $\textup{sat}(n,K_r,K_s)=\Theta(n)$&$ s>r\geq 3$&Theorem \ref{small improvement} \\
 $\textup{sat}(n,K_3,K_4)=n-2$&$ n\geq 7$&Theorem \ref{K4triangletheorem}\\
 $\textup{sat}(n,C_4,K_4) \sim \frac{n^2}{2}$& ~~~ &  Proposition \ref{c4 against k4}  \\
 $\textup{sat}(n,C_r,K_s)=\Theta(n^{\lfloor \frac{r}{2}\rfloor})$ & $s\geq 5, r\leq 2s-4$ & Theorem \ref{cr against ks with constants}\\
   $\textup{sat}(n,C_6,K_5) \sim n^3 $& ~~~ & Theorem \ref{c6 against k5} \\
 $\textup{sat}(n,K_3,C_k)=0$&$ k\geq 5, n\geq 2k+2$& Theorem \ref{main theorem k3 against c2k} \\
 $\textup{sat}(n, C_4, C_k)=0$&$ n\geq 111, k\in \{7,8,9,10\}$&Theorem \ref{main theorem c4 against ck}\\
 $\textup{sat}(n,C_{2l}, C_{2k})=0$&$ l\geq k$&\\
  $\textup{sat}(n,C_{2l}, C_{2k})=O_{k,l}(n)$&$ l< k$&\\
  $\textup{sat}(n,H,C_{2k+1})=0$&$ n\geq 2k+2\geq 4, H\text{ is not bipartite}$&Proposition \ref{H against odd cycle}\\
  $\textup{sat}(n, C_t,C_k)=0$&$ n\geq t\geq k\geq 3$&Proposition \ref{long against short}\\
  $\textup{sat}(m(r-1)+1,C_t,C_k)=0$&$ t\geq r+1, 2r-2\geq k\geq r+1$&Proposition \ref{long against short}\\
  $\textup{sat}(n,K_3,C_4)\leq \lfloor \frac{n-1}{2}\rfloor$&$n\geq 4$&\\
  $\textup{sat}(10t+1,C_4,C_6)\leq 2t$&$ t\geq 1$&Theorem \ref{c4 against c6}
 \end{tabular}
 \center{Figure \ref{summary of results}: Summary of our results}
 \end{figure}

\vspace{2mm}

The rest of this paper is organized as follows.  
In the next section, we introduce some of our notation and 
give the proofs of Propositions \ref{general ub}, \ref{quadratic lower bound}, 
and \ref{c4 against k4}.  
Many of the ideas used in the proofs of these propositions will be 
used at other places in the paper.  Section \ref{section clique saturated} considers $K_s$-saturated graphs and  
contains the proofs of Theorems \ref{small improvement}, \ref{K4triangletheorem}, \ref{cr against ks with constants}, and \ref{c6 against k5}.  Section \ref{section cycles against cycles}
contains our results on $\textup{sat}(n ,H,F)$ where $H$ and $F$ are both
cycles.  The proofs of Theorem \ref{main theorem k3 against c2k} and
\ref{main theorem c4 against ck} are given there along with more discussion. 
We end with some open problems in Section \ref{section problems}.


\section{Notation and Proofs of Propositions \ref{general ub}, 
\ref{quadratic lower bound}, and \ref{c4 against k4}}

Throughout the paper, we write $(n)_k$ to denote the falling factorial $(n)(n-1)\ldots(n-k+1)$.

For two graphs $G$ and $F$, the \emph{join} of $G$ and $F$ is written $G + F$.  
This is the graph obtained by taking the union of $G$ and $F$, and joining every vertex 
of $G$ to every vertex of $F$.  For $r \geq 3$, $\overline{K_r}$ is the graph with 
$r$ vertices and no edges.  
Write $N(v)$ for the neighborhood of $v$, and 
$N_2(v)$ for the vertices at distance 2 from $v$.  
A very useful fact 
about a $K_s$-saturated graph is that its diameter is 2 provided $s \geq 3$.  
Thus, if $v$ is any vertex in a $K_s$-saturated graph $G$, then 
\[
V(G) = \{v \} \cup N(v) \cup N_2 (v).  
\]
We will slightly abuse notation and use $N(v)$ and $N_2(v)$ to represent the subgraph of $G$ induced by 
$N(v)$ and $N_2(v)$, respectively, when it is convenient.    
One observation that we will use frequently is that in a $K_s$-saturated graph, if $u$ and $v$ are two 
vertices that are not adjacent, then $N(u) \cap N(v)$ must contain a copy of $K_{s - 2}$. 
This also implies that the minimum degree in a $K_s$-saturated graph is at least $s-2$.  

Let us finish this section by giving the proofs of Propositions 
\ref{general ub}, \ref{quadratic lower bound}, and \ref{c4 against k4}.  

\bigskip

\begin{proof}[Proof of Proposition \ref{general ub}]
Let $r \geq 2 $ be an integer and $F$ be a graph.  
We use the construction of K\'{a}szonyi and Tuza \cite{kt}
to build an $n$-vertex graph $G$ that is $F$-saturated.
Start with a clique $C$ with $|V(F)| - \alpha (F) - 1$ vertices, and join all vertices of 
$C$ to an independent set $J$ with $n - ( |V(F)| - \alpha (F) - 1)$ vertices.  
The graph constructed so far is $F$-free.  This is because 
all of the edges in this graph can be covered with $|V(F)| - \alpha (F) - 1 = \beta (F) - 1$ vertices 
(here $\beta (F)$ is the minimum number of vertices in $F$ needed to touch all edges of $F$).     
We now add edges to the independent set $J$ one by one until we obtain an $F$-saturated graph $G$.  
We will never add more than $\alpha (F)$ edges incident to 
a single vertex in $J$ because this would create a copy of $F$.  Indeed, if
$v \in J$ and $v$ has $\alpha (F)$ other neighbors in $J$, then the clique 
$C$ together with $v$ forms a clique of size $|V(F)| - \alpha (F)$, and all of these vertices are joined to $\alpha (F)$ neighbors of $v$ in $J$.  This subgraph contains a copy of $F$.  Thus, at the end of the process the subgraph induced by $J$ has maximum degree less than $\alpha (F)$, and all vertices in $J$ have degree 
at most $|V(F)| - \alpha (F) - 1 + \alpha (F) - 1  = |V(F)| - 2$.  

The last step is to estimate 
the number of $K_r$'s in $G$.  There are 
$\binom{ |V(F)| - \alpha (F) - 1 } {r}$ copies of $K_r$ that do not contain a vertex in $J$.
The number of $K_r$'s that contain at least one vertex in $J$ is at most 
\[
n \binom{ |V(F) | - 2 }{ r - 1}
\]
since the degree of a vertex in $J$ is no more than $|V(F) | - 2$.  
We conclude that there are at most 
\[
n \binom{ |V(F)|  -  2}{r-1} + \binom{ |V(F)| - \alpha (F) - 1 } {r}
\]
copies of $K_r$ in the $F$-saturated graph $G$.
\end{proof}

\bigskip
\begin{proof}[Proof of Proposition \ref{quadratic lower bound}]
Let $G$ be an $n$-vertex $K_s$-saturated graph.
So if $xy$ is a nonedge of $G$, then there is an $(s-2)$-clique, 
say on $\{z_1 , \dots , z_{s-2} \}$, where each $z_i$ 
is adjacent to both $x$ and $y$.  There are 
$f_{h_1, h_2} (H)$ copies of $H$ in $G$ which are contained in the
vertex set $\{ x , y , z_1 , \dots , z_{s - 2 } \}$, such that the vertices
$h_1$, $h_2$ of each copy of $H$ correspond to $x$, $y$ respectively.  
Let $H_1 , \dots , H_f$ be these copies of 
$H$ where $f = f_{h_1,  h_2} (H)$.  We claim that 
each time we choose a nonedge and obtain the corresponding 
$f$ many copies of $H$, we will never see the same copy of $H$ twice.
To see this, suppose $xy$ and $x' y'$ are distinct nonedges of $G$.
Let $H_1 , \dots , H_f$ and $H_1 ' , \dots , H_f '$ be copies of 
$H$ obtained from $xy$ and $x' y'$, respectively.  The vertex set 
of each $H_i$ is $\{x,y, z_1 , \dots , z_{s-2} \}$ where 
$\{z_1 , \dots ,z_{s-2} \}$ is an $(s-2)$-clique, and 
$x$ and $y$ are joined to every $z_i$.  If some $H_j '$ has 
the same vertex set as $H_i$, then 
$x' , y ' \in \{x , y , z_1 , \dots , z_{s-2}\}$.  
This is a contradiction since $x' y'$ is a nonedge and the 
only missing edge from $\{x , y , z_1, \dots , z_{s-2} \}$ is 
$xy$.  This shows that the number of copies of $H$ in $G$ is at least 
\[
f_{h_1, h_2 }(H) \left( \binom{n}{2} - e(G) \right) \geq 
f_{h_1, h_2} (H) \left( \binom{n}{2} - \textup{ex}(n, K_s ) \right).
\]  
The proposition follows from the bound 
$\textup{ex}(n , K_s ) \leq \left( 1 - \frac{1}{s - 1} \right) \frac{n^2}{2}$. Observe that since $H$ has a nonedge, $f_{h_1, h_2} (H) \geq 1$.   
\end{proof}

\bigskip

\begin{proof}[Proof of Proposition \ref{c4 against k4}]
For the upper bound, notice that $K_2+\overline{K}_{n-2}$ is $K_4$-saturated with $\binom{n-2}{2}$ copies of $C_4$.
To prove the lower bound, let $G$ be an $n$-vertex graph that is $K_4$-saturated.
If $e(G) > n^{ 5/3 + \delta }$, then the number of $C_4$'s in $G$ is at least 
\[
\frac{ 2 e(G)^4}{ n^4} - \frac{3}{4} e(G) n 
= 
\left(2 n^{3 \delta } - \frac{3}{4} \right) n^{8/3 + \delta } > \frac{n^2}{2}
\]
for large enough $n$ in terms of $\delta$ (see Lemma 2.5 
of \cite{f}).  
Now assume $e(G) \leq n^{5/3 + \delta }$.  
The argument of Proposition \ref{quadratic lower bound} shows that $G$ contains at least 
\[
\binom{n}{2} - e(G) \geq \binom{n}{2} - n^{5/3 + \delta }
\]
copies of $C_4$.
\end{proof}


\section{Counting subgraphs of clique-saturated graphs}\label{section clique saturated}
In this section we focus on graphs which are $K_s$-saturated. In Section \ref{kr against ks}, we prove Theorem \ref{small improvement}. We improve the results for the case $s=4$ and $r=3$ in Theorem \ref{K4triangletheorem} in Section \ref{section K4triangletheorem}. In Section \ref{section c6 against k5} we count cycles to prove Theorems \ref{cr against ks with constants} and \ref{c6 against k5}.

\subsection{Proof of Theorem \ref{small improvement}}
\label{kr against ks}

 
 
  


We begin this section with a lemma that is certainly known, but a proof 
  is included for completeness.  A similar result was proved by Amin, Faudree, and Gould \cite{afg} in the case that $s = 4$.    
  
\begin{lemma}\label{min degree s-2}
Let $ n > s \geq 3$ be integers.  
If $G$ is a $K_s$-saturated graph with $\delta (G) \leq s-2$, then $G$ is isomorphic to $K_{s-2} + \overline{K}_{n - s + 2}$.
\end{lemma}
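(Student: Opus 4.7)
The plan is to exploit the fact, noted in the excerpt, that every vertex of a $K_s$-saturated graph has degree at least $s-2$. So the hypothesis $\delta(G) \leq s-2$ forces $\delta(G) = s-2$, and we pick a vertex $v$ realizing this minimum degree. The strategy is then to show that $N(v)$ is precisely a $K_{s-2}$ which all remaining vertices are joined to, and that $\{v\} \cup N_2(v)$ is an independent set.

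First I would observe that since $n > s$, we have $|V(G)| > |\{v\} \cup N(v)| = s - 1$, so $N_2(v)$ is nonempty (using diameter $2$). Pick any $u \in N_2(v)$. Since $uv$ is a nonedge in a $K_s$-saturated graph, $N(u) \cap N(v)$ must contain a $K_{s-2}$; but $|N(v)| = s-2$ forces $N(v)$ itself to be a clique of size $s-2$, and also forces $u$ to be adjacent to every vertex of $N(v)$. Running this argument for every $u \in N_2(v)$ shows that every vertex of $N_2(v)$ is completely joined to the clique $N(v)$.

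Next I would rule out edges inside $N_2(v)$. If $u_1 u_2$ is such an edge, then $\{u_1, u_2\} \cup N(v)$ consists of two adjacent vertices each joined to the $(s-2)$-clique $N(v)$, producing a $K_s$ in $G$, contradicting $K_s$-freeness. Hence $N_2(v)$ is an independent set, and combining it with $v$ (which has no neighbors in $N_2(v)$ by definition) gives an independent set of size $1 + (n - 1 - (s-2)) = n - s + 2$, every vertex of which is joined to the clique $N(v) \cong K_{s-2}$. This gives exactly the desired isomorphism $G \cong K_{s-2} + \overline{K}_{n-s+2}$.

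There is no real obstacle here; the only mild subtlety is ensuring $N_2(v)$ is nonempty, which uses the hypothesis $n > s$, and noticing that the smallness $|N(v)| = s-2$ forces the $(s-2)$-clique guaranteed by saturation to fill $N(v)$ entirely rather than sit inside it with room to spare. Everything else is an immediate structural consequence of diameter $2$ and the standard common-neighbor clique property of $K_s$-saturated graphs.
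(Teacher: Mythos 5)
Your proposal is correct and follows essentially the same line as the paper's proof: pick a vertex $v$ of degree $s-2$, use the diameter-2 decomposition $V = \{v\} \cup N(v) \cup N_2(v)$, observe that the $K_{s-2}$ in each common neighborhood $N(u)\cap N(v)$ must fill $N(v)$ entirely (forcing $N(v)$ to be a clique joined to all of $N_2(v)$), and then show $N_2(v)$ is independent. The only cosmetic difference is that you finish by exhibiting all edges directly, whereas the paper closes by noting $K_{s-2}+\overline{K}_{n-s+2}$ is itself $K_s$-saturated so $G$ can have no further edges; both are fine.
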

\begin{proof}
Suppose $G$ is a $K_s$-saturated graph with $n$ vertices.
Note that $\delta (G) \leq s-2$ implies $\delta (G) = s - 2$
since $G$ is $K_s$-saturated.     
Choose a vertex 
$v$ with $d(v) = s-2$.  
If $u \in N_2(v)$, then $N(v) \cap N(u)$ must contain a 
$K_{s-2}$, but since $|N(v)| = s - 2$, $N(v)$ must
then be a clique.  
If $u_1$ and $u_2$ are distinct vertices in $N_2(v)$, then $u_1$ cannot be adjacent to 
$u_2$, otherwise $\{u_1 , u_2 \} \cup N(v)$ is a $K_s$ in $G$.  
This shows that $G$ contains a copy $K_{s-2} + \overline{K}_{n - s + 2}$ 
where $N_2(v) \cup \{v \}$ is the independent 
set of size $n - s + 2$.  
The graph $K_{s-2} + \overline{K}_{n - s + 2}$ is $K_s$-saturated and has $n$ vertices, 
so $G$ must be this graph.   
 \end{proof}
  
\bigskip

The graph $K_{s-2} + \overline{K}_{n -s + 2}$ has the property that 
$n-s + 2$ vertices have exactly one $K_{s-2}$ in their neighborhood.
The next lemma shows that this cannot occur when there are no vertices of degree $s-2$.  

\begin{lemma}\label{no single copy}
Let $n \geq 2s - 2$ and $s  \geq 3$ be integers.  
If $G$ is a $K_s$-saturated graph on $n$ vertices with $\delta (G) \geq s-1$, then 
no vertex has just one copy of $K_{s-2}$ in its neighborhood.
\end{lemma}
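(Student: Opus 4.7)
My strategy is proof by contradiction: assume some vertex $v$ has exactly one copy of $K_{s-2}$ in $N(v)$, call it $K = \{z_1,\dots,z_{s-2}\}$, and produce a second such copy. The case $s=3$ is immediate, since then $K_{s-2}=K_1$ is a single vertex, so having a unique $K_1$ in $N(v)$ would force $d(v)=1$, contradicting $\delta(G) \geq s-1 = 2$; so assume $s \geq 4$.

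Since $d(v) \geq s-1 > s-2 = |K|$, I can pick $w \in W := N(v) \setminus K$. A quick observation is that $w$ is adjacent to at most $s-4$ vertices of $K$: otherwise $\{w\}$ together with $s-3$ of its neighbors in $K$ would already form a $K_{s-2}$ in $N(v)$ containing $w \notin K$, contradicting uniqueness. In particular, $w$ has at least one non-neighbor in $K$; relabel so that $wz_1$ is a non-edge.

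The pivotal structural step is to show that $N_2(v)$ is an independent set. For any $u \in N_2(v)$, the non-edge $uv$ forces $N(u) \cap N(v)$ to contain a $K_{s-2}$; this $K_{s-2}$ lies inside $N(v)$, so by uniqueness it must equal $K$, and hence $u$ is adjacent to every vertex of $K$. If two vertices $u_1, u_2 \in N_2(v)$ were adjacent, then $\{u_1, u_2\} \cup K$ would be a $K_s$ in $G$, contradicting $K_s$-freeness.

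To finish, I apply saturation at the non-edge $wz_1$, obtaining a clique $L_w \subseteq N(w) \cap N(z_1)$ with $|L_w|=s-2$, and case on whether $v \in L_w$. If $v \in L_w$, then $L_w \setminus \{v\}$ is an $(s-3)$-clique inside $N(v) \cap N(w)$, and $(L_w \setminus \{v\}) \cup \{w\}$ is a $K_{s-2}$ in $N(v)$ containing $w \notin K$. If $v \notin L_w$, the independence of $N_2(v)$ yields $|L_w \cap N_2(v)| \leq 1$ (a clique meets an independent set in at most one vertex), whence $|L_w \cap N(v)| \geq s-3$; then $\{w\} \cup (L_w \cap N(v))$ is a clique of size at least $s-2$ inside $N(v)$ containing $w \notin K$, from which a second $K_{s-2}$ is extracted. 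Either way we contradict the uniqueness of $K$. The main hurdle is establishing that $N_2(v)$ is independent; once this is in hand, the case analysis on the position of $v$ relative to $L_w$ is routine.
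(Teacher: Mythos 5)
Your proof is correct, and at its core it follows the same path as the paper's main case: establish that $N_2(v)$ is independent because the uniqueness of $K$ forces every vertex at distance $2$ from $v$ to be adjacent to all of $K$; locate a non-edge $wz_1$ with $w\in N(v)\setminus K$ and $z_1\in K$ (which must exist since $w$ adjacent to $\geq s-3$ vertices of $K$ already yields a second $K_{s-2}$); and then extract a second $(s-2)$-clique in $N(v)$ from the saturation witness $L_w\subseteq N(w)\cap N(z_1)$. The difference lies in where you split: you case on $v\in L_w$ versus $v\notin L_w$, whereas the paper cases on $d(v)\leq n-2$ versus $d(v)=n-1$. Your split is cleaner. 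In the paper's $d(v)=n-1$ case, the authors argue that $W=N(v)\setminus S$ must be a clique and then invoke $n\geq 2s-2$ to force $|W|\geq s-1$, yielding a $K_s$. Your argument absorbs that case automatically: when $d(v)=n-1$, $N_2(v)=\emptyset$ and the subcase $v\notin L_w$ forces $L_w\subseteq N(v)$, making $\{w\}\cup L_w$ an $(s-1)$-clique in $N(v)$ and $\{v\}\cup\{w\}\cup L_w$ a $K_s$; no counting of $n$ is required. In particular, nothing in your proof uses the hypothesis $n\geq 2s-2$, so you have in fact established the lemma without it — a modest strengthening that the paper's proof does not deliver.

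Two minor points you should tidy for a polished write-up. First, when extracting the second $K_{s-2}$ in the subcase $v\notin L_w$, you can shorten the argument by noting that $\{w\}\cup(L_w\cap N(v))$ having $\geq s-1$ vertices already produces a $K_s$ with $v$, so the "extraction" is only needed when equality $|L_w\cap N(v)|=s-3$ holds; either way the contradiction is immediate. Second, it is worth stating explicitly (as you implicitly use) that $w\notin L_w$ and $z_1\notin L_w$ since $L_w\subseteq N(w)\cap N(z_1)$, to justify the cardinality counts.
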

\begin{proof}
Suppose $G$ is a $K_s$-saturated graph with $n$ vertices and $\delta (G) \geq s-1$.  
Aiming for a contradiction, assume $v$ is a vertex with exactly one copy of $K_{s-2}$ in 
$N(v)$.  Let $S \subseteq N(v)$ be the vertices that induce the unique $K_{s-2}$ in $N(v)$.

\medskip
\noindent
\textit{Case 1}: $d(v) \leq n -2$
\medskip

For any vertex $u \in N_2 (v)$, $N(u) \cap N(v)$ contains a $K_{s-2}$.  By uniqueness, 
this $K_{s-2}$ must be $S$.  
This implies $S \subseteq N(u) \cap N(v)$ and in 
particular, $u$ is adjacent to all vertices in $S$.  As every vertex in $N_2 (v)$ is joined to $S$, the 
set $N_2(v)$ must be an independent set, otherwise $G$ contains a $K_s$.  
By assumption, $d(v) \geq s - 1$ and so there is a vertex $v' \in N(v)$ with $v' \notin S$.  As there 
is only one $K_{s-2}$ in $N(v)$, vertex $v'$ cannot be adjacent to all vertices in $S$.  Say $v'$ is not 
adjacent to $v_1 \in S$.  The set $N(v_1 ) \cap N(v')$ must contain a $K_{s-2}$.  Let 
$S'$ be the vertices of such an $(s-2)$-clique. 
Note $v_1 \notin S'$ and $v' \notin S'$ since $S' \subseteq N(v_1) \cap N(v')$.  
 If $ |S' \cap N(v)| \geq s - 3$, then there is more than 
one $K_{s-2}$ in $N(v)$.  Indeed, $( S' \cap N(v) ) \cup \{ v' \} $ 
would contain a $K_{s-2}$ in $N(v)$ 
different from $S$.  This also shows $v \notin S'$
otherwise, $S' \subseteq \{v \} \cup N(v)$.  
Since the lemma is trivially true for $s=3$, assume that $s\geq 4$.
As $|S' \cap N(v) | \leq s-  4$ and 
$v \notin S'$, 
$S'$ contains at least two vertices in $N_2(v)$.  This contradicts 
the fact that $N_2(v)$ is an independent set.  

\medskip
\noindent
\textit{Case 2}: $d(v) = n -1$
\medskip
 
Let $W$ be the neighbors of $v$ that are not in $S$.  First suppose there is a pair of nonadjacent vertices, 
say $w_1$ and $w_2$, in $W$.  
Then $N(w_1) \cap N(w_2)$ must contain a $K_{s-2}$, say $S'$ are 
the vertices of such a $(s-2)$-clique.  If $v \notin S'$, then $S' = S$, but then we can remove a vertex from 
$S$ and replace it with $w_1$ to get a $K_{s-2}$ in $N(v)$ different from $S$.    
Therefore, $v$ must be in $S'$ and $|S' \backslash \{v \}  | = s- 3$.  
But then $w_1 \cup S'$ is an $(s-2)$-clique 
in $N(v)$ that is different from $S$.  This shows that $W$ is a clique and so $|W| < s - 1$ as $G$ is $K_s$-free.  This contradicts the 
assumption that $n \geq 2s - 2$.  

 \end{proof}

\bigskip

Lemma \ref{no single copy} shows that the neighborhood of any vertex 
in a $K_s$-saturated graph $G$ with $\delta (G) \geq s - 1$ must have 
at least two copies of $K_{s-2}$ in its neighborhood.  
We now use this lemma to characterize $K_s$-saturated graphs with $\delta (G) = s - 1$.

For integers $n > s \geq 3$, let $(K_{s-1} -e ) + \overline{K}_{n - s + 1}$ be the graph obtained 
by taking a $K_{s-1}$ and removing an edge $e$, and then joining all vertices of 
this graph to an independent set of size $n - s + 1$.
This graph is the same as the complete $(s-1)$-partite graph with part sizes 
$1,1, \dots , 1$ ($s-3$ times), 2, and $n - s+ 1$.  

 Let $W$ be the 6-vertex graph obtained by taking 
a 5-cycle $a_1 a_2 a_3 a_4 a_5 a_1$ and joining a new vertex $b$ to each vertex on the 5-cycle.
We call $b$ the \emph{central vertex}.  
 For $s \geq 3$ and positive integers $m_1$, $m_3$, $m_4$ with 
$m_1 + m_3 + m_4 = n - s + 1$, let $W_s( m_1, 1 , m_3 , m_4 , 1)$ be the graph
obtained from $W$ by replacing $a_i$ with an independent set $I_i$ with $|I_i| = m_i$ ($i = 1,3,4$), 
and replacing the central vertex $b$ with a clique of size $s-3$.  If $x$ and $y$ are vertices that replaced $a_i$ and $a_j$, 
respectively,
then $x$ and $y$ are adjacent if and only if $a_i$ and $a_j$ are adjacent in $W$.  
Vertices in the $(s-3)$-clique that replaced the central vertex $b$ are adjacent to all vertices in the graph and 
so have degree $n-1$.  

Amin, Faudree, and Gould \cite{afg} showed that if $G$ is an $n$-vertex $K_4$-saturated graph 
that is 3-connected, then $G$ is isomorphic to $(K_{3} - e ) + \overline{K}_{n - 3}$, or to 
$W_4( m_1 , 1, m_3 , m_4 , 1)$ for some $m_1 + m_3 + m_4 = n - 3$.    We prove a similar result 
for $K_s$-saturated graphs that have minimum degree $s-1$.

\begin{lemma}\label{lemma for degree}
If $G$ is a $K_s$-saturated $n$-vertex graph with $\delta (G) = s - 1$, 
then $G$ is isomorphic to $(K_{s-1} -e ) + \overline{K}_{n - s + 1}$, or to 
$W_s (m_1 , 1 , m_3 , m_4 , 1 )$ for some $m_1 + m_3 + m_4 = n - s+1$.  
\end{lemma}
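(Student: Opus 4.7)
The plan is to analyze $G$ starting from a vertex $v$ with $d(v)=s-1$. Since $G$ is $K_s$-free, $N(v)$ cannot induce $K_{s-1}$, and since $\delta(G)\geq s-1$, Lemma \ref{no single copy} forces $N(v)$ to contain at least two copies of $K_{s-2}$. A short case check on an $(s-1)$-vertex graph shows that if there were two nonedges sharing a vertex, only one $K_{s-2}$ would remain, while two disjoint nonedges would leave no $K_{s-2}$ at all; hence $N(v)$ has exactly one nonedge. So I may write $N(v)=T\cup\{a,b\}$, where $T$ is a clique of size $s-3$ and $ab$ is that unique nonedge, and the two $(s-2)$-cliques in $N(v)$ are $T\cup\{a\}$ and $T\cup\{b\}$.

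Because $G$ has diameter at most $2$, every $u\in N_2(v)$ satisfies that $N(u)\cap N(v)$ contains a $K_{s-2}$, so $u$ is adjacent to all of $T$ and to at least one of $a,b$. I would partition $N_2(v)=X\cup Y\cup Z$ into the vertices adjacent to $a$ only, $b$ only, or both, respectively. Next I would establish four structural facts by checking which additional edges would create a $K_s$ using $T$ together with one of $a,b$: each of $X$, $Y$, $Z$ is an independent set; $Z$ sends no edges to $X\cup Y$; and $X$ and $Y$ are joined by a complete bipartite graph. This last claim is the only place where the saturation hypothesis (rather than mere $K_s$-freeness) is used: if some nonadjacent pair $x\in X$, $y\in Y$ existed, the three preceding facts would give $N(x)\cap N(y)=T$, which is only a $K_{s-3}$, so no $K_s$ could be completed by adding $xy$, contradicting saturation.

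At this stage the minimum degree hypothesis $\delta(G)\geq s-1$ finishes the case split. A direct degree count yields $d(x)=s-2+|Y|$ for $x\in X$ and the symmetric identity for $y\in Y$, so $X$ and $Y$ are simultaneously empty or simultaneously nonempty. If $X=Y=\emptyset$, the degree condition on $a$ and $b$ forces $Z\neq\emptyset$, and $G$ is the join $(T\cup\{a,b\})+(\{v\}\cup Z)\cong(K_{s-1}-e)+\overline{K}_{n-s+1}$. If instead $X,Y\neq\emptyset$, the five sets $\{v\}\cup Z$, $\{a\}$, $X$, $Y$, $\{b\}$ form, in cyclic order, the independent parts of a blown-up $5$-cycle, each joined completely to the central clique $T\cong K_{s-3}$; this gives $G\cong W_s(|Z|+1,1,|X|,|Y|,1)$.

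The main obstacle is verifying the detailed adjacency structure on $N_2(v)$, and in particular pinning down the complete bipartite join between $X$ and $Y$ via the saturation property. Once those structural facts are in place, the two possible isomorphism types follow from a routine bookkeeping check using the degrees and the cyclic adjacency pattern.
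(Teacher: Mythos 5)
Your proof is correct and follows essentially the same route as the paper's: you identify the unique nonedge $ab$ of $N(v)$ via Lemma \ref{no single copy}, partition $N_2(v)$ by adjacency to $a$ and $b$ (your $X,Y,Z$ are the paper's $T_1,T_2,T_3$), derive the independence and complete-bipartite structure from $K_s$-freeness and saturation, and read off the two isomorphism types. The only variation is that you show $X$ and $Y$ are simultaneously empty or nonempty by a degree count on a vertex of $X$ after the complete bipartite join is in place, whereas the paper obtains this by arguing that a vertex of $T_1$ must share a common neighbor with $v_{s-1}$ in $N_2(v)$; both are valid.
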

\begin{proof}
Suppose $v$ is a vertex in a $K_s$-saturated $n$-vertex graph $G$ where $\delta (G) = s -1$
and $d(v) = s-1$.
By Lemma \ref{no single copy}, there must be at least two $(s-2)$-cliques in $N(v)$.
If there are more than two $(s-2)$-cliques in $N(v)$, then $N(v)$ is complete, which gives a $K_s$ in $G$.
Thus, $N(v)$ contains exactly two $(s-2)$-cliques.  
Let $S_1 = \{v_1, v_2 , \dots , v_{s-3} , v_{s-2} \}$ be the first $K_{s-2}$, and 
$S_2 = \{ v_1 , v_2  , \dots , v_{s - 3} , v_{s-1} \}$ be the second (so the only edge missing 
from $N(v)$ is $v_{s-2} v_{s-1}$).  

Let $T_1$ be all vertices in $N_2(v)$ that are adjacent to
every vertex in $S_1$, but not adjacent to $v_{s-1}$.  
Similarly, let $T_2$ be all vertices in $N_2(v)$ that are adjacent to 
all vertices in $S_2$, but not adjacent to $v_{s-2}$.  
Lastly, let $T_3$ be all vertices in $N_2(v)$ 
that are adjacent to all vertices in $S_1 \cup S_2 = N(v)$.
Since $N(v) \cap N(t)$ must contain a $K_{s-2}$ for any $t \in N_2(v)$, 
the sets $T_1$, $T_2$, and $T_3$ form a partition of $N_2(v)$.  
Also, both $T_1 \cup T_3$ and $T_2 \cup T_3$ are independent sets since $G$ is $K_{s}$-free.

If $T_1 = T_2 = \emptyset$, 
then $T_3$ is an independent set on $n -s+1$ vertices
that are all joined to each vertex in 
$\{v_1, v_2, \dots , v_{s-2}, v_{s-1} \}$.  Vertex $v$ is also joined 
to these vertices, but is not joined to any vertex in $T_3$. 
This shows that $G$ contains a subgraph isomorphic to 
$(K_{s-1} -e ) + \overline{K}_{n - s + 1}$.  This last graph is $K_s$-saturated and has $n$ 
vertices so $G$ must be this graph.

Now suppose $T_1 \neq \emptyset$ and let $t \in T_1$.  Since $t$ is not adjacent to 
$v_{s-1}$, $N(t) \cap N( v_{s-1} )$ must contain a $K_{s-2}$.  
The intersection $N(t) \cap N( v_{s-1})$ contains the $(s-3)$-clique $\{v_1, v_2 , \dots , v_{s-3} \}$ so there 
must be another vertex $x$ for which $x$ is adjacent to both $t$ and $v_{s-1}$.
If $x \in T_1 \cup T_3$, then we contradict the fact that $T_1 \cup T_3$ is an independent set.
Therefore, $x \in T_2$ and so $T_2 \neq \emptyset$.
This argument shows that $T_1 \neq \emptyset$ if and only if $T_2 \neq \emptyset$.  
Next, let $y \in T_1$ and $z \in T_2$ be arbitrary vertices.  We will show that $y$ and $z$ are adjacent. 
If they are not, then $N(y) \cap N(z)$ must contain a $K_{s-2}$.  Now $N(y) \cap N(z) \cap N(v) = \{v_1, v_2 , \dots , v_{s-3} \}$,
and so there must be a vertex in $N_2(v)$ that is adjacent to both $y$ and $z$.   
This is impossible though since $y \in T_1 \cup T_3$, $z \in T_2 \cup T_3$, 
$T_1 \cup T_3$ and $T_2 \cup T_3$ are independent sets, and 
$N_2(v) = T_1 \cup T_2 \cup T_3$.  Thus, every vertex in $T_1$ is joined to every vertex in $T_2$. 
At this point, we have a $K_s$-saturated subgraph that is isomorphic to 
\[
W_s ( |T_3| + 1 , 1, |T_1 | , | T_2 | , 1).  
\]
Indeed, $\{v_1 , v_2 , \dots , v_{s-3} \}$ is a $(s-3)$-clique and every vertex in this set has degree $n-1$.
If this clique replaces the central vertex $b$ in the graph $W$ defined before Lemma \ref{lemma for degree}, 
and we replace $a_1$ with $T_3 \cup \{v \}$, 
$a_2$ with $v_{s-2}$, $a_3$ with $T_1$, $a_4$ with $T_2$, and $a_5$ with $v_{s-1}$, we obtain 
a $W_s ( m_1 , 1 , m_3 , m_4 , 1)$.  
This last graph is $K_s$-saturated and has $n$ vertices, so
$G$ must be this graph.   
\end{proof}

\bigskip
Let us summarize what we have shown so far.  Let $G$ be an $n$-vertex $K_s$-saturated graph.
\begin{enumerate}
\item If $\delta (G) \leq s - 2$, then $G$ is isomorphic to 
$K_{s-2} + \overline{K}_{n - s + 2}$.  
\item If $\delta (G) = s - 1$, then 
$G$ is isomorphic to $(K_{s-1} -e ) + \overline{K}_{n - s + 1}$, or some 
\[
W_s ( m_1 , 1, m_3 , m_4 , 1)
\]
with $m_1 + m_3 + m_4 = n - s + 1$.    
\end{enumerate}

We now use Lemmas \ref{min degree s-2},
\ref{no single copy}, and \ref{lemma for degree}
to 
prove Theorem \ref{small improvement}.
  
\bigskip  

\begin{proof}[Proof of Theorem \ref{small improvement}]
Let $G$ be a $K_s$-saturated graph with $n$ vertices. 
We first show that there are at least 
\[
\frac{1}{r} \left( \binom{s-2}{r-1} + \binom{s-3}{r-2} \right)n 
\]
copies of $K_r$ in $G$.  

If $\delta (G) = s- 2$, then 
$G$ is isomorphic to $K_{s-2} + \overline{K}_{n - s + 2}$ by Lemma \ref{min degree s-2}.  
This graph has 
$\binom{s-2}{r} + (n - s + 2) \binom{s - 2}{r-1}$ 
copies of $K_r$.  For large enough $n$, 
this is at least 
$\frac{1}{r} \left( \binom{s-2}{r-1} + \binom{s - 3}{r-1}  \right) n$.  

If $\delta (G) =  s  -1$, then by Lemma \ref{lemma for degree}, $G$ is isomorphic to 
$(K_{s-1} -e ) + \overline{K}_{n - s + 1}$ or $W_s (m_1 , 1, m_3  , m_4 , 1)$ for some 
$m_1 + m_3 + m_4 = n - s + 1$.  The first graph has 
\[
\binom{s-2}{r} +  \binom{s-3}{r-1} + 
(n - s +1) \left( \binom{s-2}{r-1} + \binom{s-3}{r-2} \right)
\]
copies of $K_r$.  
A member of $W_s (m_1 , 1 , m_3 , m_4 , 1)$ that minimizes the number of $K_r$'s is 
obtained when two of the $m_i$'s are 1, and the other is $n - s - 1$.  The number of $K_r$'s in this graph is 
\[
(n - s -1) \left( \binom{s-2}{r-1} + \binom{s-3}{r-1} \right) 
+ \binom{s-3}{r} + 4 \binom{s-3}{r-1} + 3 \binom{s-3}{r-2}.
\] 
In both cases, we have at least 
$\frac{1}{r} \left( \binom{s-2}{r-1} +
\binom{s-3}{r-2} \right) n$ copies of $K_r$ for large enough $n$.

Assume $\delta (G) \geq s $.  By Lemma \ref{no single copy}, every vertex has at least two distinct copies of 
$K_{s-2}$ in its neighborhood.  Thus, for all $v \in V(G)$, 
the number of $K_{r-1}$'s in $N(v)$ is at least 
\[
\binom{s-2}{r-1} + \binom{s-3}{r-2} 
\]
as 
the two $(s-2)$-cliques in $N(v)$ cannot form a $K_{s-1}$ (this would create a $K_s$, using $v$, in $G$).
The number of $K_r$'s in $G$ is at least 
 \[
 \frac{1}{r} \sum_{v \in V(G) } 
 ( \mbox{number of $K_{r-1}$'s in $N (v)$}) 
 \geq \frac{1}{r} \left( \binom{s-2}{r-1} + \binom{s-3}{r-2}  \right)n.
 \]
 Next we show that there are also at least 
 \[
 \frac{1}{r-1} \binom{s-2}{r - 1}n - \binom{s-2}{r-1} - o_n(1)
\]
copies of $K_r$ in $G$.  
By a result of Erd\H{o}s \cite{erdos paper} for $r = 3$ and 
Mubayi \cite{mubayi} for $r \geq 4$, there 
is a positive constant $\alpha_{r,s}$, depending 
only on $r$ and $s$, such that if $G$ has at least $\textup{ex}(n , K_r ) + \alpha_{r,s}$ edges, then $G$ has at least $\binom{s-2}{r-1}n$ copies of $K_r$,
in which case we are done.
Now assume that 
\[
e(G) \leq \textup{ex}(n , K_r) + \alpha_{r,s} 
\leq \left( 1 - \frac{1}{r-1} \right) \frac{n^2}{2} + \alpha_{r,s}.
\]
Consider a pair of nonadjacent vertices $x$ and $y$.  Their common 
neighborhood contains at least one copy of $K_{s-2}$ since $G$ is 
$K_s$-saturated.  This gives $2 \binom{s-2}{r-1}$ copies of $K_r$ 
that contain the vertex $x$ or contain the vertex $y$.
Now $x$ has at least $s-2$ neighbors, and so each copy of 
$K_r$ containing $x$ obtained in this way (by choosing a 
nonedge containing $x$ and looking at the common neighborhood)
is counted at most $n - s + 2$ times.  Thus, 
the number of copies of $K_r$ in $G$ is at least
\[
\frac{ 2 \binom{s- 2}{r- 1} e( \overline{G} ) }{n - s  +2}
\geq
\frac{2 \binom{s-2}{r-1} }{n} \left( \binom{n}{2} - e(G) \right)
\geq
\frac{2 \binom{s-2}{r-1} }{n} \left( \frac{n^2}{2 (r - 1) } - \frac{n}{2} - \alpha_{r,s} \right).
\]
For large enough $n$, this is at least 
\[
\frac{1}{r-1} \binom{ s-2 }{r-1} n - 2 \binom{s-2}{r-1}.
\]
\end{proof}

\subsection{Proof of Theorem \ref{K4triangletheorem}}\label{section K4triangletheorem}

Let $G$ be a $K_4$-saturated graph on $n$ vertices. 
We must show that $G$ has at least $n -2$ triangles, and 
if $G$ has $n-2$ triangles, then $G$ is isomorphic to $K_2 + \overline{K}_{n-2}$.
A \emph{triangle block} is a maximal subgraph of $G$ constructed by starting with a triangle and repeatedly adding triangles to it such that each new triangle shares at least one edge with a previous triangle. One can easily see that if a triangle block contains $x$ vertices, then it contains at least $x-2$ triangles. In fact, $K_2 + \overline{K}_{n-2}$ is a triangle block on $n$ vertices. Also notice that if two triangle blocks have at least two vertices in common, and their union contains $x$ vertices, then it contains at least $x-2$ triangles. 

A \emph{triangle cluster} is a maximal union of triangle blocks $B_1, B_2, \ldots, B_k$ such that each block $B_i$ (for $2 \le i \le k$) shares at least two vertices with the union of blocks $B_1, B_2, \ldots, B_{i-1}$. A triangle cluster also has the property that if it has $x$ vertices, then it has at least $x-2$ triangles. More importantly, note that any two triangle clusters share at most one vertex in common.  Indeed, otherwise their union is contained in a triangle cluster contradicting the maximality. 

\begin{claim}
\label{notight3path}
If a triangle cluster $C$ has three triangles of the form $abc, bcd, cde$, then $G$ has more than $n-2$ triangles.
\end{claim}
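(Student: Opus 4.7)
The plan is to exhibit one triangle in the cluster $C$ beyond the baseline count $x(C)-2$ guaranteed by the cluster structure, and then combine this with the cluster decomposition of $G$.

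I would first use $K_4$-freeness to deduce $ad, be \notin E(G)$: each would, together with the edges already present in the tight path, complete a $K_4$ on $\{a,b,c,d\}$ or $\{b,c,d,e\}$. I then split on whether $ae$ is an edge. If $ae \in E(G)$, the triangle $ace$ is a fourth triangle in $C$ (sharing edge $ac$ with $abc$) and uses only vertices already in $C$, so $t(C) \ge x(C) - 1$. If $ae \notin E(G)$, I apply $K_4$-saturation to the non-edge $ae$: there exist $u, v \in N(a) \cap N(e)$ with $uv \in E(G)$, and $c \in N(a) \cap N(e)$ already. If $c \in \{u,v\}$, the other endpoint $z$ must lie outside $\{a,b,c,d,e\}$ (else $z=b$ would force $be \in E$ or $z=d$ would force $ad \in E$), and the triangles $acz, ecz$ both join $C$, with $ecz$ sharing two edges ($ec$ and $cz$) with previously existing triangles, so it supplies the extra triangle with no new vertex. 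If $c \notin \{u,v\}$, the triangles $auv, euv$ form a block that merges into $C$ through the two shared vertices $a, e$; I then apply saturation a second time to $cu$ (or symmetrically $cv$): if $cu \in E(G)$ then $acu, ecu$ are new triangles in $C$ on pre-existing vertices, while if $cu \notin E(G)$ saturation of $cu$ forces at least one of $bu, du \in E(G)$, giving $abu$ or $edu$ as a triangle in $C$ on pre-existing vertices. In every case $t(C) \ge x(C) - 1$.

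Finally, writing the cluster decomposition of $G$ as $C_1 = C, C_2, \ldots, C_m$, I bound
\[
t(G) \;\ge\; (x(C_1)-1) + \sum_{i=2}^m (x(C_i) - 2),
\]
and combine with the two structural facts: every vertex of $G$ lies in at least one cluster (since $K_4$-saturation forces every vertex into a triangle, as discussed in the preamble to the claim), hence $\sum_i x(C_i) \ge n$, and any two clusters share at most one vertex. These together should yield $t(G) \ge n - 1 > n - 2$.

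The main obstacle I anticipate is the second sub-case of $ae \notin E(G)$, where the extra triangle is not visible from the tight path alone and requires a carefully executed second round of saturation with $K_4$-freeness checks at each step to make sure the forced edges do not themselves create a $K_4$. A secondary difficulty is the final bookkeeping when $m \ge 2$: the naive sum above only gives $t(G) \ge n - 2m + 1$, so one must use the pairwise one-vertex overlap between clusters to recover the missing $2(m-1)$ and close the gap to $n-1$.
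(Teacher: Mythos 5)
Your proposal takes a genuinely different route from the paper. You try to show the special cluster $C$ already has at least $x(C)-1$ triangles (one better than the baseline $x(C)-2$), and then close by bookkeeping over the cluster decomposition of $G$. The paper instead looks \emph{outside} $C$: any vertex $v\notin C$ is non-adjacent to at least three of $\{a,b,c,d,e\}$, and since any pair of non-adjacent vertices lies in a common cluster (adding the missing edge produces a $K_4$, hence a shared triangle block), $v$ belongs to three distinct clusters and hence to at least three triangles; then $\sum_u t(u)=3\,t(G)$ closes the argument immediately. Your local improvement to $x(C)-1$ is plausible and even essentially checkable in the cases where the extra triangle sits on $\{a,\dots,e\}$ or adds only one new vertex — a buildup argument shows that a block containing $k+2$ triangles on $k+3$ vertices extends to at least $y-1$ triangles on $y$ vertices — but the two obstacles you flag are real, and the second one is fatal as stated.

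The first gap is in the sub-case $ae\notin E(G)$, $c\notin\{u,v\}$, $cu\notin E(G)$: you assert that $K_4$-saturation of the non-edge $cu$ forces $bu\in E(G)$ or $du\in E(G)$. It does not. Saturation gives adjacent vertices $p,q$ with $cp,cq,up,uq\in E(G)$, but $p,q$ need not lie in $\{a,b,d,e\}$; they may be entirely new, in which case neither $abu$ nor $edu$ is a triangle and your extra triangle evaporates. The second gap is the final accounting. With $t(C_1)\ge x(C_1)-1$ and $t(C_i)\ge x(C_i)-2$ for $i\ge 2$, you get $t(G)\ge \sum_i x(C_i)-2m+1$, but the only global facts you invoke are $\sum_i x(C_i)\ge n$ and pairwise overlap at most one. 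These alone cannot give $\sum_i x(C_i)\ge n+2m-2$: for instance, $m$ clusters sharing a single hub vertex and otherwise disjoint would have $\sum_i x(C_i)=n+m-1$. What actually rescues the count is precisely the observation you never invoke — that every pair of non-adjacent vertices shares a cluster — which forces vertices outside $C$ into several distinct clusters simultaneously; this is the engine of the paper's proof, and without it the cluster-by-cluster bookkeeping cannot be closed.
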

\begin{proof}
Consider any vertex $v$ not in $C$. Then $v$ is not adjacent to at least three of the vertices $x,y,z \in \{a,b,c,d,e\}$. Now notice that any two non-adjacent vertices $p$ and $q$ belong to the same triangle cluster.  Indeed, adding $pq$ to $G$ must create a $K_4$, so there exist vertices $r,s$ such that $prs$ and $qrs$ are triangles in $G$, so $p$ and $q$ belong to the same triangle block, and so they belong to the same triangle cluster as well. Suppose $v$ and $x$ belong to a triangle cluster $C_1$, 
$v$ and $y$ belong to $C_2$, and $v$ and $z$ belong to $C_3$. 
Now $C_1$, $C_2$, $C_3$ are distinct triangle clusters because 
if, say $C_1 = C_2$, then $C_1$ and $C$ would share two vertices ($x$ and $y$), a contradiction. This implies that every vertex $v$ not in $C$ belongs to at least three different triangles. Suppose $C$ has $m$ vertices and let $t(u)$ denote the number of triangles containing a vertex $u$.  Then since $C$ contains at least $m-2$ triangles, we have $\sum_{u} t(u) \ge 3(n-m)+3(m-2)+1 > 3(n-2)$. On the other hand, the sum $\sum_{u} t(u)$ counts each triangle 3 times exactly, proving the claim.
\end{proof}

\bigskip

By Claim \ref{notight3path}, we can assume that 
every triangle cluster $C$ consists of triangles of the form $abx_1, abx_2, \ldots, abx_r$ for integer $r \ge 1$. If $r \ge 2$, let us call $ab$ the base of a triangle $abx_i$ (for any $i \in \{1, 2, \ldots, r\}$)  and $x_i$ as its tip. For a vertex $u$, let us define $p(u)$ as the number of triangles whose tip is the vertex $u$. If there is a triangle cluster with $n$ vertices, then $G$ is 
isomorphic to $K_2 + \overline{K}_{n-2}$ and we are done.  Assume this is not the case.  

\begin{claim}
\label{tip_degree}
For any vertex $v$, there is a triangle cluster that does not contain $v$. Moreover, $p(v) \ge 2$. 
\end{claim}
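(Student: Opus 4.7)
The plan rests on the following consequence of $K_4$-saturation, which I will call the \emph{expansion tool}: for any non-adjacent pair $v, w$, the set $N(v) \cap N(w)$ contains an edge $xy$, so the two triangles $vxy$ and $wxy$ share the edge $xy$ and hence lie in a common triangle cluster with $r \ge 2$ whose base is $xy$, and in which both $v$ and $w$ appear as tips.

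For the first assertion, I would argue by contradiction, assuming $v$ lies in every cluster. First, every vertex of $G$ must lie in some triangle: otherwise a triangle-free vertex $u$ would have an independent neighborhood, and the saturation property would then force $u$ to have no non-neighbors, making $G$ a star, which is not $K_4$-saturated for $n \ge 7$. Hence every vertex lies in some cluster. Since $G \neq K_2 + \overline{K}_{n-2}$, no single cluster covers all $n$ vertices, so at least two clusters $C_1, C_2$ exist; both contain $v$, and any two distinct clusters share at most one vertex, so $C_1 \cap C_2 = \{v\}$. I would then pick $u \in C_1 \setminus \{v\}$ that is adjacent to every other vertex of $C_1$---any base vertex of $C_1$ different from $v$ works when $r \ge 2$, and any vertex of $C_1$ other than $v$ works when $r = 1$. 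No other cluster can contain $u$, because it would also contain $v$, contradicting the one-vertex intersection. Hence by the expansion tool every non-neighbor of $u$ lies in $C_1$; but $u$ is adjacent to all of $C_1 \setminus \{u\}$, so $u$ has no non-neighbors at all, i.e., $u$ is universal. Then $G - u$ must be $K_3$-free (else $G$ would contain $K_4$), so every triangle, and hence every cluster, contains $u$ as well. This forces $\{u, v\} \subseteq C_1 \cap C_2$, the final contradiction.

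For the second assertion, I would use the first to produce a cluster $C$ not containing $v$, writing its base as $ab$ and its tips as $x_1, \dots, x_r$ (with the $r = 1$ case handled identically by treating any two vertices of $C$ as the base). Since $v \notin C$, $v$ cannot be adjacent to both $a$ and $b$, because $vab$ would share $ab$ with $C$'s triangles and pull $v$ into $C$; moreover, if $v$ is adjacent to $b$ but not to $a$, then $v$ cannot be adjacent to any tip $x_i$, because $vbx_i$ would share $bx_i$ with $abx_i \in C$. In every subcase this produces two non-neighbors $p, q$ of $v$ lying in $C$ with $pq \in E(G)$: take $\{p, q\} = \{a, b\}$ if both base vertices are non-neighbors of $v$, and otherwise $\{a, x_1\}$ or $\{b, x_1\}$ (each an edge of the book $C$). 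Applying the expansion tool to $(v, p)$ and $(v, q)$ yields clusters $C_p$ and $C_q$, each containing $v$ as a tip. If $C_p = C_q$, then $p$ and $q$ would both be tips of this common cluster and therefore non-adjacent, contradicting $pq \in E(G)$. Hence $C_p \neq C_q$ and $p(v) \ge 2$.

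The delicate step is the first assertion: I must combine the book structure of each cluster with the one-vertex-intersection property to manufacture a universal vertex, and then observe that universality forces a second ``in every cluster'' vertex distinct from $v$, yielding the desired contradiction. The second assertion is essentially a small case analysis, with the key observation that two witness pairs $(v, p)$ and $(v, q)$ automatically land in distinct clusters precisely because $pq$ is an edge of $C$.
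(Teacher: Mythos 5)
Your proof is correct, but for the first assertion it takes a genuinely different and longer route than the paper. The paper's argument is direct: it picks an arbitrary cluster $C$, a vertex $u \notin C$, and a triangle $abc$ in $C$; since $u$ is non-adjacent to two of $a,b,c$ (say $a$ and $b$), the saturation property places $u$ with $a$ in some cluster $C'$ and $u$ with $b$ in some cluster $C''$, and the one-vertex-intersection property then forces $C \cap C' = \{a\}$, $C \cap C'' = \{b\}$, and $C' \cap C'' = \{u\}$, so no vertex lies in all three and in particular some cluster avoids $v$. You instead argue by contradiction: assuming $v$ lies in every cluster, you fabricate a universal vertex $u$ and observe that $u$ too must then lie in every cluster, violating the one-vertex-intersection property. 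This works, but it costs you an auxiliary lemma (every vertex is in a triangle, hence a star is excluded) that the paper does not need, and a more involved chain of deductions. For the second assertion, your argument and the paper's are essentially the same idea: find two non-neighbors $p,q$ of $v$ in a cluster $D$ that avoids $v$ with $pq$ an edge, apply saturation twice, and observe that the two resulting clusters must be distinct. Your case analysis on base versus tip is correct but unnecessary: the paper simply notes that $v$ must be non-adjacent to two vertices of any triangle of $D$, and those two vertices are automatically adjacent because they lie on a triangle. Both routes are valid, but the paper's treatment is shorter at each step and avoids the contradiction framework and the extra preliminary lemma.
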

\begin{proof}
Consider any triangle cluster $C$ and a vertex $u$ not in $C$.
If we take any triangle $abc$ in $C$, then $u$ is not adjacent to at least two of the vertices $a, b, c$, otherwise $u$ would have to be in $C$. Suppose without loss of generality that $u$ is not adjacent to $a$ or $b$. 
Therefore, the vertices $u$ and $a$ belong to a triangle cluster $C'$, and the vertices $u$ and $b$ belong to a triangle cluster $C''$. Then, by the linearity of triangle clusters, 
$C$, $C'$, $C''$ are distinct and there is no vertex contained in all three of them, proving the first part of the claim. Thus, for any vertex $v$, there is a triangle cluster $D$ not containing it; moreover $v$ is not adjacent to some two vertices $a, b$ in $D$. The second part of the claim simply follows by using the fact that adding the pairs $va$ or $vb$ must create a $K_4$ in $G$.
\end{proof}

\bigskip

By Claim \ref{tip_degree}, $\sum_u p(u) \ge 2n$. Moreover, the sum $\sum_u p(u)$ counts each triangle at most once (notice that the triangles that do not share an edge with another triangle are not counted by this sum). So the number of triangles in $G$ is at least $2n$. Since if $G$ has $2n\geq n-2$ triangles when $G\neq K_2+\overline{K}_{n-2}$, we conclude that $\textup{sat}(n,K_3,K_4)=n-2$ and the extremal example is uniquely achieved by $K_2+\overline{K}_{n-2}$. 



\subsection{Proof of Theorems \ref{cr against ks with constants} and \ref{c6 against k5}}\label{section c6 against k5}

\begin{proof}[Proof of Theorem \ref{cr against ks with constants}]
For an upper bound on $\sat(n,C_r,K_{s})$ consider the graph $G=K_{s-2}+\overline{K}_{n-s+2}$. Let $A$ be an independent set of  $k=\lfloor\frac{r}{2}\rfloor$ vertices in $\overline{K}_{n-s+2}$. There are $\binom{n-s+2}{k}$ ways to pick an independent set of size $k$. If $r$ is even, then there are $\frac{(s-2)_{k}(k-1)!}{2}$ $C_r$ subgraphs containing $A$ and each $C_r$ is counted once. 
If $r$ is odd, then there are $\frac{(s-2)_{k+1}k!}{2}$ $C_r$ subgraphs containing $A$ and each $C_r$ is counted once.  
Furthermore, there does not exist a $C_r$ subgraph with more than $\lfloor\frac{r}{2}\rfloor$ vertices in $\overline{K}_{n-s+2}$ since $\alpha(C_r)=\lfloor \frac{r}{2}\rfloor$. Thus, $$\sat(n,C_r,K_{s})\leq \begin{cases} 
\left( \frac{(s-2)_k}{2k}\right)\left(n^{k}+o(n^{k})\right) & \text{if } 2|r\\
\left( \frac{(s-2)_{k+1}}{2}\right)\left(n^{k}+o(n^{k})\right) & \text{if } 2\not |r
\end{cases}.$$

Let $G$ be a graph that witnesses $\sat(n,C_r,K_s)$ for $s\geq 5$, $2s-4\geq r\geq s+1$. Notice that if $xy\notin E(G)$, then there exists a $K_{s-2}$ subgraph in the common neighborhood of $x$ and $y$. Furthermore, if $xy\notin E(G)$, then there exists $s-2$ internally disjoint $x,y$-paths of length $2$.

\medskip
\noindent
\textit{Case 1}: $r$ is even.
\medskip

Let $A\subseteq V(G)$ be an independent set of size $k$. Fix the index for $a_1\in A$. There are $(k-1)!$ ways to index the remaining elements. Notice that for each $1 \le i \le k-1$, the vertices $a_i,a_{i+1}$ have at least $s-2$ common neighbors (since there is a copy of $K_{s-2}$ in their common neighborhood), and similarly $a_1, a_k$ have at least $s-2$ common neighbors.

If $r$ is even, then we will select distinct $b_i\in N(a_i)\cap N(a_{i+1})$ for $1\leq i \leq k-1$ and $b_k\in  N(a_1)\cap N(a_{k})$. So we pick $k$ different elements to form a set $B=\{b_i: 1\leq i \leq k\}$. Since $r\leq 2s-4$, we have that $k\leq s-2$; so we can always pick $B$ in at least $(s-2)_k$ ways. Since $$a_1b_2a_2\dots a_kb_k$$ is a cycle of length $r$, the total number of cycles of length $r$ we see is at least $\frac{(s-2)_k(k-1)!}{2}$ times the number of independent sets of size $k$ in $G$.


By Theorem 1$^{**}$ in \cite{ES_1983}, there exists $c,c'>0$ such that for any graph $G$ with $$|E(G)|\geq cn^{2-2/r},$$ there exists $$c'n^{r}\left(\frac{|E(G)|}{n^2}\right)^{(r/2)^2}$$ copies of $K_{r/2,r/2}$. Each copy of $K_{r/2,r/2}$ contains many copies of $C_r$. Therefore, if $|E(G)|=\epsilon n^2$ and $n$ is sufficiently large, there are $\Theta(n^r)$ copies of $C_r$. Thus, we can assume that $|E(G)|=o(n^2)$ and that $G$ has $n^2/2-o(n^2)$ non-edges. Using the Moon-Moser Theorem, we know that $G$ has at least $\binom{n}{k}-o(n^k)$ independent sets of size $k$.

Each $C_r$ in $G$ is counted at most $2$ times ($C_r$ induces at most two independent sets of size $k$). Putting our estimates together gives, $$\sat(n,C_r,K_s)\geq\left(\frac{(s-2)_k(k-1)!}{4}\right)\left(\binom{n}{k}-o(n^k)\right)=\left(\frac{(s-2)_k}{4\cdot k}\right)\left(n^k-o(n^k)\right).$$

\medskip
\noindent
\textit{Case 2}: $r$ is odd.
\medskip

Let $xy\notin E(G)$, $Z=\{z_1,\dots, z_{s-2}\}\subseteq N(x)\cap N(y)$ induce a clique, and $A=\{a_2,\dots, a_{k-1}\}\subseteq V(G)\setminus (Z\cup \{x,y\})$. These elements could be indexed in $(k-2)!$ ways.

Construct an $x,y$-path $P$ as follows. Let $a_1=x$ and $a_{k}=y$. For each $1\leq i\leq k$ with $a_ia_{i+1}\notin E(G)$ we can choose $b_i\in N(a_i)\cap N(a_{i+1})$ that has not been used to create an $a_1,a_{k}$-path that traverses the vertices $a_i$ in the order of their index. Let $B$ be the set of the $b_i$'s chosen, $B=\{b_i\in N(a_i)\cap N(a_{i+1}):a_ia_{i+1}\notin E(G) \}$. Notice that $|B|\leq k-1$ since we choose at most one $b_i$ per pair $a_ia_{i+1}$. It follows that  $|V(P)|= k+|B|<r$ and  $|Z\setminus V(P)|= s-2-|B|>0.$ The set $B$ can be chosen in at least $(s-2)_{|B|}$ ways.

Since $|Z\setminus V(P)|=s-2-|B|$ and $|V(P)|=k+|B|$, we can extend $P$ to a $C_r$ subgraph if $r-k-|B|\leq s-2-|B|$. This condition is met by our assumption on $r$. Extending $P$ with $Z$ can be done in $(s-2-|B|)_{k+1-|B|}$ ways. Therefore, for any fixed choice of $xy\notin E(G)$, $Z$, and $A$, we see at least $(k-2)!(s-2)_{k+1}$ copies of $C_s$ in $G$. 

Notice that there are at least $\left(\binom{n}{2}-\textup{ex}(n,K_{s})\right)\geq\left(\frac{n^2}{2(s-1)}-\frac{n}{2}\right)$ ways to choose $xy\notin E(G)$. Then there are $\binom{n-s}{ k-1}$ ways to pick $A$. If we count copies of $C_r$ in this way, each copy of $C_r$ in $G$ is counted at most $\left(\frac{r(r-3)}{2}\right)\cdot\binom{r-2}{k-2}$ times (choose a non-edge in the cycle, and then choose the set $A$). Thus, 
\begin{align*}
\sat(n,C_r,K_{s})&\geq \frac{(k-2)!(s-2)_{k+1}}{\left(\frac{r(r-3)}{2}\right)\cdot\binom{r-2}{k-2}}\left(\frac{n^2}{2(s-1)}-\frac{n}{2}\right)\binom{n-s}{ k-2}\\
&\geq \left(\frac{(s-2)_{k+1}(k-2)!}{r(r-3)(r)_k(s-1)}\right)\left(n^k-o(n^{k})\right).
\end{align*}

\end{proof}


Next we prove Theorem \ref{c6 against k5}.  First we need a lemma 
which relates the number of copies of a $C_6$ in a $K_5$-saturated 
graph to the number of independent sets of size 3.  

\begin{lemma}\label{lemma for c6 in k5 sat}
If $G$ is a $K_5$-saturated graph, then the number of copies of $C_6$ in $G$ is at least 
\[
6 i_3 (G),
\]
where $i_3 (G)$ is the number of independent sets of size 3 in $G$.
\end{lemma}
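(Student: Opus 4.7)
My plan is to show that each independent set $I=\{a,b,c\}$ of size three in the $K_5$-saturated graph $G$ gives rise to at least six copies of $C_6$ in $G$ whose ``other side'' is non-independent in $G$; since such copies can then be attributed uniquely to $I$, summing over all $i_3(G)$ independent triples will produce at least $6\,i_3(G)$ distinct copies of $C_6$.

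I would fix $I$ and set $A=N(a)\cap N(b)$, $B=N(b)\cap N(c)$, $C=N(a)\cap N(c)$. Because $G$ is $K_5$-saturated, adding any non-edge of $I$ creates a $K_5$, so each of $A,B,C$ contains a triangle and in particular has size at least three. For any ordered triple $(x,y,z)\in A\times B\times C$ with $x,y,z$ pairwise distinct, the six-edge subgraph $a-x-b-y-c-z-a$ is a valid copy of $C_6$ in $G$, and distinct triples (for a fixed ordering of $(a,b,c)$) give rise to distinct $C_6$-subgraphs. A routine inclusion--exclusion using only $|A|,|B|,|C|\ge 3$ then shows that the number of valid ordered triples is at least $6$, with equality exactly when $A=B=C$ is a single triangle; thus there are at least six copies of $C_6$ in $G$ having $I$ as one of their two alternating triples of vertices.

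To convert this local count into the global bound, I must ensure that the six copies produced from a given $I$ are not double-counted by some other independent set. In the extremal case $A=B=C=T$, the ``other side'' $\{x,y,z\}$ of each produced cycle equals the triangle $T$ and is therefore non-independent in $G$, so double-counting cannot occur. I would extend this to arbitrary configurations by establishing the subclaim that $N(a)\cap N(b)\cap N(c)$ always contains a triangle $T$ in a $K_5$-saturated graph: the complete bipartite graph between $I$ and any such $T$ is a $K_{3,3}$ and contributes six Hamiltonian $C_6$'s whose non-$I$ side is $T$ and therefore non-independent. To prove the subclaim, the natural idea is to start with the triangle $T_{ab}\subseteq N(a)\cap N(b)$ guaranteed by $K_5$-saturation and, whenever some vertex $u\in T_{ab}$ fails to be adjacent to $c$, apply $K_5$-saturation to the non-edge $uc$ and use the resulting triangle $T_{uc}\subseteq N(u)\cap N(c)$ together with the $K_5$-free property to exchange $u$ for a vertex lying in $N(a)\cap N(b)\cap N(c)$, iterating as necessary. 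The main obstacle I anticipate is carrying out this exchange rigorously in every configuration of the three common neighborhoods; once that is in place, the six $C_6$'s per independent set are uniquely attributable and the lemma follows.
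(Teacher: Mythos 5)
Your plan hinges on the subclaim that in every $K_5$-saturated graph, any independent triple $\{a,b,c\}$ has a triangle inside $N(a)\cap N(b)\cap N(c)$. This is exactly the point where the argument breaks down, and it is not merely a technical difficulty: the paper's own proof begins by \emph{partitioning} independent triples into $\mathcal{I}_3'(G)$ (those with a triangle in the common neighborhood) and $\mathcal{I}_3''(G)$ (those without), and the bulk of the proof is a three-case analysis devoted entirely to handling $\mathcal{I}_3''(G)$. The authors' Case 3, for instance, explicitly considers an independent triple $\{x,y,z\}$ with a triangle $abc$ in $N(x)\cap N(y)$, a triangle $uvw$ in $N(x)\cap N(z)$, and a triangle $rst$ in $N(y)\cap N(z)$, with all twelve vertices distinct, which is precisely the situation your subclaim forbids. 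So the subclaim is not a lemma awaiting a clever proof; it fails, and your scheme of attributing six $C_6$'s to each independent triple via a $K_{3,3}$ into a common triangle $T$ cannot even get started for triples in $\mathcal{I}_3''$.

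Your proposed exchange argument also does not make progress toward the subclaim even informally. If $u\in T_{ab}$ is not adjacent to $c$, saturation of the non-edge $uc$ gives a triangle $T_{uc}\subseteq N(u)\cap N(c)$, but there is no reason for any vertex of $T_{uc}$ to be adjacent to $a$ or $b$; so you cannot simply swap $u$ for one of them to land in $N(a)\cap N(b)\cap N(c)$, nor does iterating repair this. When the common neighborhood genuinely contains no triangle, the count of $C_6$'s must be obtained by a different bookkeeping. The paper's solution is to always produce $C_6$'s of the form $x\alpha y\beta z\gamma x$ whose \emph{vertex set does not induce} $K_6-K_3$ when $\{x,y,z\}\in\mathcal{I}_3''$; this structurally separates them from the $C_6$'s produced for $\mathcal{I}_3'$ (which do induce $K_6-K_3$) and handles the one remaining collision (both alternating triples independent, arising only in their Case 3) by a divide-by-two argument. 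Your proposal has no mechanism playing this role.

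A smaller remark: your lower bound of six ordered triples $(x,y,z)\in A\times B\times C$ pairwise distinct is correct given $|A|,|B|,|C|\ge3$, but on its own it does not tell you the six resulting $C_6$'s have non-independent other sides, which is what the claimed unique attribution requires. That issue is again repaired only by the (false) subclaim. In short, the local count is fine, but without a valid replacement for the subclaim, the argument does not establish the bound $6\,i_3(G)$.
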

\begin{proof}
Let $G$ be a $K_5$-saturated graph.  Let $\mathcal{I}_3 (G)$ be the set of all independent sets in $G$ 
having three vertices.  Partition $\mathcal{I}_3 (G)$ into two sets $\mathcal{I}_{3}' (G)$ and $\mathcal{I}_{3}'' (G)$
where $\{x ,  y , z \} \in \mathcal{I}_{3}' (G)$ if and only if there is a triangle in the common neighborhood of $\{x,y,z \}$.
We will count the number of copies of $C_6$ of the form $x\alpha y \beta z \gamma x$ where at least one of $\{x,y,z \}$ or 
$\{\alpha , \beta , \gamma \}$ is an independent 
set in $G$.   Then the only $C_6$'s that will be counted more than once are those for 
which both $\{x,y,z \}$ and $\{ \alpha , \beta, \gamma \}$ are independent sets. (Note that this situation 
does not occur until Case 3 given below.)  These $C_6$'s will be counted twice.  

For vertices $x$ and $y$, let $N(x,y)$ be the vertices adjacent to both $x$ and $y$.  When $x$ and 
$y$ are not adjacent, $N(x,y)$ contains at least one triangle, since $G$ is $K_5$-saturated.

Let $\{x,y,z \} \in \mathcal{I}_{3}' (G)$.  Choose a triangle $abca$ with $a,b,c \in N(x,y) \cap N(x,z) \cap N(y,z)$.
Then we have six copies of $C_6$ using $x$, $y$, $z$, $a$, $b$, and $c$:
\begin{equation}\label{list of c6}
xaybzcx, ~xayczbx, ~xbyazcx, ~xbyczax, ~xcyazbx, ~xcybzax.
\end{equation}
If we choose another $\{x_1, y_1 , z_1 \} \in \mathcal{I}_{3}' (G)$ distinct from $\{x,y,z \}$, and a 
triangle $a_1 b_1 c_1a_1$ that lies in the common neighborhood of $\{x_1 , y_1 , z_1 \}$, then 
none of the 6-cycles 
\begin{center}
$x_1 a_1y_1b_1z_1c_1 x_1$, ~~~ $x_1a_1y_1c_1z_1b_1 x_1$, ~~~$x_1b_1y_1a_1z_1c_1x_1$,
 
$x_1b_1y_1c_1z_1a_1x_1$, ~~~ $x_1c_1y_1a_1z_1b_1 x_1$, ~~~ $x_1c_1y_1b_1z_1a_1x_1$
\end{center}
will coincide with a 6-cycle listed in (\ref{list of c6}).  This is because the only three independent 
vertices in $\{x,y,z,a,b,c \}$ are $x$, $y$, and $z$, and the only three independent vertices 
in $\{x_1 , y_1 , z_1 , a_1 , b_1 , c_1 \}$ are $x_1$, $y_1$, and $z_1$.  Thus, 
we have that the number of copies of $C_6$ counted so far is 
\begin{equation}\label{first count c6}
6 | \mathcal{I}_{3}' (G)|.
\end{equation}
Furthermore, the vertex set of each $C_6$ counted by (\ref{first count c6}) induces a subgraph that is isomorphic
to $K_6 - K_3$ (the graph obtained by removing a triangle from $K_6$).  

Now let $\{x,y,z \} \in \mathcal{I}_{3}'' (G)$.  
We are going to count copies of $C_6$ of the form $x \alpha y \beta z \gamma x$.  The first observation to 
make is that we will not count copies of $C_6$ that are counted by (\ref{first count c6}).
The reason for this is that if $x \alpha y \beta z \gamma x$ is any 6-cycle with $\{x,y,z \} \in \mathcal{I}_3'' (G)$, 
then $\{x , y , z , \alpha , \beta , \gamma \}$ does not induce $K_6 -  K_3$ since 
$x$, $y$, and $z$ have no triangle in their common neighborhood.   
Let us proceed now with the counting.  Let $abca$ be a triangle in $N(x,y)$.
Then $z$ is not adjacent to all $3$ of of $a$, $b$, and $c$ and so we consider 
cases depending on the number of adjacencies between $z$ and $\{a,b,c \}$.

\medskip
\noindent
\textit{Case 1}: $z$ is adjacent to $a$ and $b$, but not $c$

\smallskip
Let $t \in N(x,z)\setminus \{a,b\}$. 
Such a $t$ exists since $x$ and $z$ have at least 3 common neighbors.  We are not claiming that 
$a$, $b$, and $t$ form a triangle.  We now consider two subcases.

First suppose that $y$ is also adjacent to $t$.  Then we have the following list of 12 $C_6$'s:
\begin{center}
$xaybztx$, ~~~ $xaytzbx$, ~~~ $xbyazt x$, ~~~ $xbytzax$, ~~~ $xcyazbx$, ~~~ $xcyaztx$

$xcybzax$, ~~~ $xcytzax$, ~~~ $xcytzax$, ~~~ $xcytzbx$, ~~~ $xtybzax$, ~~~~ $xtyazbx$. 
\end{center}
Each one of these $C_6$'s contains at least two of $a$, $b$, and $c$.  Thus, each is of the 
form $x \alpha y \beta z \gamma x$ where $\{ \alpha , \beta , \gamma \}$ is not an independent set of size 3.  

Now suppose that other than $a$ and $b$, there is no vertex adjacent to each of $x$, $y$, and $z$.  Let 
$s \in N(y,z)$.  Such a vertex exists since $N(y,z)$ must contain a triangle.  
In this case, we have the following list of 13 $C_6$'s:
\begin{center}
$xaybztx$, ~~~ $xayszbx$, ~~~ $xbyaztx$, ~~~ $xbyszax$, ~~~ $xcyazbx$, 

$xcyaztx$, ~~~ $xcybzax$, ~~~ $xcybztx$, ~~~ $xcyszax$, ~~~ $xcyszbx$, 

$xaysztx$, ~~~ $xbysztx$, ~~~ $xcysztx$.
\end{center}
The first ten in the list are of the form $x \alpha y \beta z \gamma x$ where $\{ \alpha , \beta , \gamma \}$ 
is not an independent set of size 3.  

The conclusion is that in both of these subcases, we have at least 10 copies of 
$C_6$ of the form $x \alpha y \beta z \gamma x$ where $\{x,y,z \}$ is an 
independent set, $\{ \alpha , \beta , \gamma \}$ is not an independent set, 
and $\{x,y,z, \alpha , \beta , \gamma \}$ does not induce a graph isomorphic 
to $K_6 - K_3$.

\medskip
\noindent
\textit{Case 2}: $z$ is adjacent to $a$, but not adjacent to $b$ or $c$

\smallskip

Since $N(x,z)$ must contain a triangle, there must be a pair of adjacent vertices $s$ and $t$ with 
$\{ s ,t \} \cap \{a,b,c \} = \emptyset$ and $s,t \in N(x,z)$.  Our goal is to find at least 6 copies of 
$C_6$ of the form $x \alpha y \beta z \gamma x$ where $\{ \alpha , \beta , \gamma \}$ is not an 
independent set.  Four such $C_6$'s are 
\begin{center}
$xbyaztx$, ~~~ $xbyazsx$, ~~~ $xcyaztx$, ~~~~ $xcyastx$.
\end{center}
If $y$ is adjacent to $t$, then two more are $xaytzsx$ and $xaysztx$ and we are done.  
Assume that $y$ is not adjacent to $t$ or $s$.  Since $N(y,z)$ contains a triangle, there is a new 
vertex $u$ with $u$ adjacent to both $y$ and $z$.  Then $xbyuzax$ and $xcyuzax$ are two more 
$C_6$'s with the property that we need.  

The conclusion is that in Case 2, we have at least 6 copies of $C_6$ of the form $x \alpha y \beta z \gamma x$ 
where $\{ x ,y , z \}$ is an independent set, $\{ \alpha , \beta, \gamma \}$ is not an independent set, 
and $\{x,y,z , \alpha , \beta , \gamma \}$ does not induce a graph isomorphic to $K_6 - K_3$.

\medskip
\noindent
\textit{Case 3}: $z$ is not adjacent to $a$, $b$, or $c$

\smallskip

Let $uvwu$ be a triangle in $N(x,z)$ where the vertices $x,y,z,a,b,c,u,v,w$ are all distinct.  

First suppose that $y$ is adjacent to $w$.  Then we have the following 6 copies of $C_6$:
\begin{center}
$xaywzvx$, ~~~~ $xaywzux$, ~~~~ $xbywzvx$, ~~~ $xbywzux$, ~~~ $xcywzvx$, ~~~ $xcywzux$.
\end{center}
Each of these $C_6$'s is of the form $x \alpha y \beta z \gamma x$ where 
$\{x,y,z \}$ is an independent set, $\{ \alpha , \beta , \gamma \}$ is not (they all contain at least two of 
$\{u,v,w \}$), and $\{x ,y , z , \alpha , \beta , \gamma \}$ does not induce a $K_6 - K_3$.  

Now suppose that $y$ is not adjacent to any of $u$, $v$, or $w$.  Let $rstr$ be a triangle in $N(y,z)$ where 
all of the vertices $x,y,z,a,b,c,u,v,w,r,s,t$ are distinct.  
Using these vertices we find 27 copies of $C_6$ that are all of the form $x \alpha y \beta z \gamma x$ because $\alpha$ can 
be any one of $\{a,b,c \}$, $\beta$ can be any one of $\{r,s,t \}$, and $\gamma$ can be any one of $\{ u , v , w \}$.  
While we know that $\{x,y,z \}$ is independent and $\{x,y,z, \alpha , \beta , \gamma \}$ does not induce 
a $K_6 - K_3$, we do not know if $\{ \alpha , \beta , \gamma \}$ is independent.
In the case that $\{ \alpha , \beta , \gamma \}$ is independent, we obtain the 6-cycle 
$x \alpha y \beta z \gamma x$ in two ways; once when we choose $\{x,y,z \} \in \mathcal{I}_{3}''(G)$ and 
again when we choose $\{ \alpha , \beta , \gamma \} \in \mathcal{I}_{3}''(G)$.  
Dividing by two takes care of this over counting and so we obtain at least 27 copies of $C_6$ provided we divide this count 
by 2.

\medskip

Combining Cases 1 through 3, we get at least 
\[
6 | \mathcal{I}_{3}''(G) | 
\]
copies of $C_6$.  
Therefore, the number of $C_6$'s in $G$ is at least 
\[
6 | \mathcal{I}_{3}'(G) | + 6 | \mathcal{I}_{3}'' (G) | = 6 | \mathcal{I}_3 (G) |.
\]
\end{proof}

\begin{proof}[Proof of Theorem \ref{c6 against k5}]
Let $G$ be a graph that witnesses $\sat(n,C_6, K_5)$ with $n$ vertices. First we claim that we may assume $G$ has $O(n^{3/2})$ edges. Suppose that $G$ contains 
at least 
$Cn^{3/2}$ edges for some large enough constant $C > 0$. A supersaturation result of Simonovits (see \cite{ES_1984}) implies that there is a constant $c > 0$ such that $G$ contains at least $c \cdot C^{6}n^3$ many copies of $C_6$. Thus we can choose $C$ large enough to obtain the desired lower bound on the number of $C_6$'s. From now on, we assume $G$ has $O(n^{3/2})$ edges, so the number of pairs $xy\not\in E(G)$ is $\binom{n}{2} - C'n^{3/2}$ for some constant $C'> 0$. 

Now using the Goodman bound on the number of triangles, we know that $G$ has $\binom{n}{3}$ independent sets of size $3$, asymptotically. If $m$ is the number of non-edges in $G$, then 
\begin{align*}\displaystyle
\#(\{x,y,z\}:xy,zy,zx\notin E(G))&\geq \frac{m(4m-n^2)}{3n}\\
&=\frac{(\binom{n}{2} - C'n^{3/2})(4\binom{n}{2} - 4C'n^{3/2}-n^2)}{3n}\\
&=\frac{n^4-o(n^4)}{6n}\\
&=\frac{n^{3}}{6}-o(n^3).
\end{align*}
By Lemma \ref{lemma for c6 in k5 sat}, the number of copies of 
$C_6$ in $G$ is at least 
\[
6 \left( \frac{n^3}{6} - o(n^3) \right) = (1  - o(1)) n^3.
\]
\end{proof}

\section{Cycles in $C_k$-saturated graphs}\label{section cycles against cycles}

The focus of this section is cycles in $C_k$-saturated graphs.  We begin with a few easy propositions.

\begin{proposition}\label{H against odd cycle}
Let 
$n \geq 2k + 2 \geq 4$ be integers.  For any nonbipartite graph $H$, 
\[
\textup{sat}(n , H , C_{2k+1} ) = 0.
\]
\end{proposition}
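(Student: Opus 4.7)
The plan is to exhibit an explicit bipartite graph on $n$ vertices which is $C_{2k+1}$-saturated. Since a bipartite graph has no odd cycles, in particular it contains no copy of any nonbipartite $H$, so zero copies of $H$ is automatic once saturation is verified.

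Concretely, I would take $G = K_{a,b}$ with $a + b = n$ and $a,b \geq k+1$; this is possible precisely because $n \geq 2k+2$. As a bipartite graph, $G$ is $C_{2k+1}$-free. To check $C_{2k+1}$-saturation, consider any nonedge $uv$ of $G$; such a nonedge lies inside one part, say $A$. After adding $uv$, I want to exhibit a $C_{2k+1}$ through $uv$, which amounts to producing a $u$--$v$ path of length exactly $2k$ in $K_{a,b}$ using distinct vertices. Such a path must alternate between parts, using $k$ vertices of $B$ and $k-1$ internal vertices of $A \setminus \{u,v\}$; these exist because $|B| \geq k$ and $|A| \geq k+1$ by construction, and any such alternating choice indeed gives a path in the complete bipartite graph. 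The symmetric argument works when the added nonedge lies in $B$.

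The only step requiring a small check is the parity/length count in the path construction, and it is immediate: a walk of $2k$ steps starting in $A$ ends in $A$, matching the location of $u$ and $v$. The existence of sufficiently many distinct intermediate vertices is guaranteed by the hypothesis $n \geq 2k+2$, which lets us pick $a,b \geq k+1$. There is no real obstacle here; the proposition essentially repackages the informal remark about complete bipartite graphs that already appears in the introduction of the paper.
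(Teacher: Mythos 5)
Your proof is correct and follows exactly the same approach as the paper: take a complete bipartite graph with both parts of size at least $k+1$, which is automatically $H$-free for nonbipartite $H$, and verify $C_{2k+1}$-saturation by exhibiting an alternating path of length $2k$ between the endpoints of any added nonedge. The paper states this more tersely but relies on precisely the same construction and parity observation.
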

\begin{proof}
Let $n \geq 2k +2$ and consider a complete bipartite graph where both parts have at least $k+1$ vertices.
This graph is $H$-free and $C_{2k+1}$-saturated.  
\end{proof} 

\begin{proposition}\label{long against short}
(i) For any $n \geq t \geq k \geq 3$, 
\[
\textup{sat}(n , C_t , C_k ) = 0.
\]

\noindent
(ii) Let $r \geq 3$ be an integer.  For any integer $m \geq 1$, 
\[
\textup{sat}( m ( r- 1) + 1 , C_t , C_k ) = 0
\]
whenever $t \geq r +1$ and $r+1 \leq k \leq 2r - 1$.
\end{proposition}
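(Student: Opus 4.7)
The plan is to prove both parts by exhibiting explicit \emph{windmill} graphs---graphs obtained by taking several cliques and identifying a single vertex across all of them---that are simultaneously $C_k$-saturated and $C_t$-free. The key general observation is that if $Q_1,\dots,Q_m$ are cliques meeting pairwise in exactly one vertex $v$, then every simple cycle of the union $\bigcup_i Q_i$ is contained in some single $Q_i$, because a simple cycle cannot visit $v$ twice.

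For part (ii), I would take $G$ to be the graph obtained from $m$ disjoint copies of $K_r$ by identifying one distinguished vertex of each copy into a common vertex $v$, so that $|V(G)|=1+m(r-1)$. By the observation above, every cycle of $G$ lies inside some $K_r$, so the longest cycle has length $r$ and $G$ is $C_t$-free for every $t\ge r+1$. For saturation, every non-edge of $G$ joins $x\in Q_i\setminus\{v\}$ to $y\in Q_j\setminus\{v\}$ with $i\neq j$. Inside $K_r$ there is a $v$--$x$ path of every length $\ell\in\{1,2,\dots,r-1\}$, and similarly for $y$, so adding $xy$ creates a cycle of length $1+\ell_1+\ell_2$ for every $\ell_1,\ell_2\in\{1,\dots,r-1\}$, realizing all lengths in $\{3,4,\dots,2r-1\}$. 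Since $r+1\le k\le 2r-1$, a $C_k$ is always produced, so $G$ is $C_k$-saturated and $\textup{sat}(m(r-1)+1,C_t,C_k)=0$.

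For part (i), I would adjust the construction so that it has exactly $n$ vertices. When $k=3$ the star $K_{1,n-1}$ already works: it is acyclic (hence $C_t$-free for every $t\ge 3$) and any added edge closes a triangle through the center. For $k\ge 4$, I would take $m-1$ copies of $K_{k-1}$ together with one additional clique $K_s$ of size $s\in\{2,\dots,k-1\}$, all sharing a single vertex $v$; this yields $1+(m-1)(k-2)+(s-1)$ vertices, and setting $m=\lfloor(n-k)/(k-2)\rfloor+2$ and $s=2+\big((n-k)\bmod(k-2)\big)$ realizes every $n\ge k$. The cycle-containment observation again confines all cycles to a single clique, so the longest cycle has length at most $k-1$ and $G$ is $C_t$-free for all $t\ge k$. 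For saturation, any non-edge joins two distinct cliques; since at most one of them is smaller than $K_{k-1}$, on the $K_{k-1}$ side one can route a $v$--endpoint Hamiltonian path of length $k-2$, and on the other side a length-$1$ edge to $v$, which together with the new edge forms a $C_k$.

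The main thing to verify carefully is the edge case of part (i) when $s=2$, i.e.\ when the extra clique is a single pendant $w$ attached to $v$: the only non-edges incident to $w$ go from $w$ to a vertex $x$ of some $K_{k-1}$ copy, and the required $C_k$ is then the new edge $wx$ together with a Hamiltonian $x$--$v$ path in that $K_{k-1}$ (of length $k-2$) and the edge $vw$. Every other case reduces to the elementary fact that $K_{k-1}$ contains a $v$--$u$ path of every length from $1$ to $k-2$ between any two of its vertices, so beyond checking the parameter arithmetic there is no substantial obstacle.
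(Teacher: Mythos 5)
Your proof is correct and uses essentially the same constructions as the paper: for part (ii), $m$ copies of $K_r$ glued at a single vertex $v$, and for part (i), a windmill of cliques of size at most $k-1$ through $v$, sized to realize every $n\ge k$. The paper describes part (i) as $q$ copies of $K_{k-2}$ and one $K_r$ (with $0\le r\le k-3$) all joined to a new vertex $v$, which after reparametrization is exactly your collection of $K_{k-1}$'s and one $K_s$ sharing $v$; you merely spell out the path-routing through $v$ in more detail.
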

\begin{proof}
First we prove (i).  Given a positive integer $n \geq t$, 
write $n = 1 + q ( k - 2) + r$ where $q$ and $r$ are nonnegative integers 
with $r \in \{0,1, \dots , k - 3 \}$.  
Take $q$ copies of $K_{k-2}$ and one copy of $K_r$, and 
join every vertex in these complete graphs to a new vertex $v$.
This graph is $C_k$-saturated, and has no cycle of length 
greater than $k-1$.  

Next we prove (ii).  Let $t \geq r + 1 $ and $r + 1 \leq k \leq 2r - 1$ 
where $r \geq 3$.  
Consider $m$ copies of $K_r$ identified on a single vertex $v$ (the case when $r = 3$ is the Friendship Graph on $2m +1$ vertices 
and $3m$ edges).  The longest cycle in this graph has length $r$.  When an edge is added in this graph, we obtain a cycle of length $k$
for each $k \in \{r+1 , r+2 , \dots , 2r - 1 \}$.  
\end{proof}

\bigskip

In light of Propositions \ref{H against odd cycle} and \ref{long against short}, we focus our attention on 
\[
\textup{sat}(n , C_t , C_k) 
\]
where $t < k$, and at least one of $t$ or $k$ is even.  
Our arguments used to prove upper bounds on $\textup{sat}(n , C_t , C_k)$ depend on $t$, and so we 
divide this section into some further subsections.


\subsection{Triangles in $C_4$-saturated graphs}\label{k3s in c4-sat}

By Proposition \ref{H against odd cycle},
\[
\textup{sat}(n , K_3  , C_{2k+1} ) = 0
\]
for all $k \geq 1$ and $n \geq 2k +2$.

The first nontrivial case that we consider is  
$\textup{sat}(n , K_3 , C_4)$.  
Note that a $C_4$-saturated graph has diameter at most 3, otherwise adding an edge between a 
pair of vertices at distance more than 3 does not create a $C_4$.  
Both the 5-cycle and the Petersen graph 
have girth 5 and so are $K_3$-free and $C_4$-free.  One can also check that 
these two graphs are $C_4$-saturated.  Both are examples of Moore graphs, and the next 
proposition makes this connection between triangle-free $C_4$-saturated graphs 
and Moore graphs precise.  

\begin{proposition}\label{moore graphs}
Let $G$ be a triangle-free $C_4$-saturated graph. Then either $G$ has diameter $3$, or $G$ is a Moore graph.
\end{proposition}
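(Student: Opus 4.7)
The plan is: first argue $G$ has diameter at most $2$ under the assumption that it does not have diameter $3$, then extract the Moore-graph parameters (adjacent vertices share no common neighbor, non-adjacent pairs share exactly one), and finally prove that $G$ is regular (the only serious obstacle), which together with the resulting vertex count forces $G$ to be a Moore graph.

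Since any $C_4$-saturated graph has diameter at most $3$ (if $d(u,v) \geq 4$ then adding $uv$ cannot complete a $4$-cycle), assuming $G$ does not have diameter $3$ gives diameter at most $2$. Combining triangle-freeness, $C_4$-freeness, and diameter $\leq 2$, adjacent vertices share $0$ common neighbors and non-adjacent vertices share exactly $1$; in particular, the girth is at least $5$.

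The central computation I have in mind is to show that for every edge $uv$ we have $d(u)d(v) = n - 1$. The route is a bijection between $N_2(u) \cap N_2(v)$ and the product $(N(u) \setminus \{v\}) \times (N(v) \setminus \{u\})$. One first observes that no edges run between $N(u) \setminus \{v\}$ and $N(v) \setminus \{u\}$: if $w \in N(u) \setminus \{v\}$, the unique common neighbor of the non-adjacent pair $w, v$ must be $u$, so $w$ has no other neighbor in $N(v)$. Then $z \in N_2(u) \cap N_2(v)$ corresponds to the pair consisting of its unique common neighbor with $u$ and its unique common neighbor with $v$; conversely, any pair $(w, x)$ with $w \in N(u) \setminus \{v\}$ and $x \in N(v) \setminus \{u\}$ is non-adjacent and has a unique common neighbor, which one checks lies in $N_2(u) \cap N_2(v)$. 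Since $|N_2(u) \cap N_2(v)| = n - d(u) - d(v)$, the bijection yields $(d(u)-1)(d(v)-1) = n - d(u) - d(v)$, hence $d(u)d(v) = n - 1$.

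The main obstacle is deducing full regularity from this edge identity. My plan is a dichotomy. If $G$ contains an odd cycle, then chaining $d(x)d(y) = n - 1$ along the cycle forces all degrees on the cycle to coincide (equal to $\sqrt{n-1}$), and the connectedness implied by diameter $\leq 2$ spreads this common value to all of $V(G)$. If instead $G$ is bipartite, then diameter $\leq 2$ forces $G$ to be complete bipartite (a non-adjacent cross-part pair has nowhere to find a common neighbor), and $C_4$-freeness collapses $G$ to a star $K_{1,n-1}$; but $K_{1,n-1}$ is not $C_4$-saturated for $n \geq 3$ (an added edge creates a triangle, not a $C_4$), a contradiction. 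Hence $G$ is $k$-regular with $n = k^2 + 1$; combined with girth $\geq 5$ and diameter $\leq 2$ this attains the Moore bound, so $G$ is a Moore graph of diameter $2$.
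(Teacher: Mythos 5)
Your proof is correct, but it takes a genuinely different and more self-contained route than the paper. Once the diameter is at most $2$, the paper uses $C_4$-saturation directly to exhibit a $5$-cycle: adding a nonedge $xy$ creates a $C_4$, hence there is an $x$--$y$ path $P$ of length $3$ in $G$; since $G$ is triangle-free, $P$ cannot pass through any common neighbor $v$ of $x$ and $y$, so $Pv$ is a $C_5$. The paper then concludes by citing Singleton's theorem \cite{Singleton_1968}: a diameter-$2$ graph of girth $5$ is a Moore graph (in particular, regular). You, by contrast, re-derive the regularity claim from first principles via the degree identity $d(u)d(v)=n-1$ for every edge, established through the bijection between $N_2(u)\cap N_2(v)$ and $(N(u)\setminus\{v\})\times(N(v)\setminus\{u\})$, followed by the odd-cycle versus bipartite dichotomy to propagate a single common degree across the graph. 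This is essentially a proof of Singleton's lemma, so your argument is longer but does not rely on an external result, while the paper black-boxes the nontrivial step. Your bipartite branch is handled correctly: a bipartite graph of diameter $\leq 2$ is complete bipartite, $C_4$-freeness reduces it to a star $K_{1,n-1}$, and a star is not $C_4$-saturated because adding a leaf--leaf edge makes only a triangle. One small remark: the paper's conclusion invokes the ``diameter $d$, girth $2d+1$'' characterization of Moore graphs, while you land on ``$k$-regular with $n=k^2+1$ and diameter $2$''; these are equivalent, and from your conclusion one also recovers girth exactly $5$, since a graph of girth $g$ that contains a cycle has diameter at least $\lfloor g/2\rfloor$.
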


\begin{proof}
Suppose that $G$ has diameter $2$ and let $x$ and $y$ be non-adjacent vertices 
and  $v\in N(x)\cap N(y)$. Since $G$ is $C_4$-saturated, there exists an $x,y$-path $P$ of length $3$. Since $G$ is triangle free, the path $P$ does not contain $v$. Therefore, $Pv$ is a $C_5$ subgraph of $G$ and the girth of $G$ is $5$. Since $G$ has diameter $2$ and girth $5$, it is a Moore graph \cite{Singleton_1968}.
\end{proof}

\bigskip

The 5-cycle, the Petersen graph, and the Hoffman-Singleton graph 
are all examples of triangle-free $C_4$-saturated graphs with diameter $2$.
For several small values of $n$, 
there are $n$-vertex triangle-free $C_4$-saturated graphs with diameter $3$.  These were found by a computer search and show that 
\[
\textup{sat}(n , K_3 , C_4) = 0 ~~ \mbox{for}~~ n \in \{8, 9 , \dots , 24 \}.
\]
Miller and Codish \cite{cm} investigated extremal graphs of girth at least 5 and at most 32 vertices.  
They determined all graphs with $n$ vertices, girth 5, and the maximum 
number of edges for $n$ in the range $\{20,21, \dots , 32 \}$.  We checked several of their extremal graphs.
Some were $C_4$-saturated while others were not.  For example, the unique extremal graph of girth 5 having 
20 vertices is $C_4$-saturated. 
On the other hand, of the three extremal graphs of girth 5 with 21 vertices, none are $C_4$-saturated.  
All three 
extremal graphs on 22 vertices are $C_4$-saturated, and the largest girth 5 extremal graph found in \cite{cm},  
which has 32 vertices, is also $C_4$-saturated.  These claims were verified using Mathematica \cite{mathematica}.     

We do not know 
if 
$\textup{sat}(n , K_3 , C_4)=0$ for infinitely many $n$, and we were 
unable to show that $\textup{sat}(n , K_3 , C_4) > 0$ for infinitely many $n$.
By taking a vertex of degree $n-1$ and putting a matching of size $\lfloor \frac{n-1}{2} \rfloor$ in its neighborhood, we 
obtain the upper bound
\[
\textup{sat}(n , K_3 , C_4 ) \leq \left\lfloor \frac{n-1}{2} \right\rfloor.
\]

Determining the behavior of this function is an intriguing open problem.


\subsection{Triangles in $C_k$-saturated graphs ($k  \geq 5$) and the proof of Theorem \ref{main theorem k3 against c2k}}

The method we use here is to find a small $K_3$-free $C_{2k}$-saturated graph 
with a special set of vertices.  The presence of this special set of vertices, which will be made precise in a moment, will allow us to 
clone vertices, yet maintain both the $K_3$-free property and the $C_{2k}$-saturated property.   

\begin{lemma}\label{duplicating 2}
Let $k \geq 5$, $d \geq 2$, and $G$ be a $C_k$-saturated graph.  
Suppose that $G$ contains $d$ vertices $v_1 , \dots , v_d$ such that 
$v_i$ and $v_j$ have the same neighborhood for all $1 \leq i , j \leq d$, and this neighborhood has 
size at most $d$.  If $G_u$ is the graph obtained by adding a new vertex $u$ to $G$ and 
making $N(u)$ the same as $N(v_1)$, then $G_u$ is $C_k$-saturated.  
\end{lemma}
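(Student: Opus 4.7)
The plan is to verify the two defining properties of $C_k$-saturation for $G_u$: first that $G_u$ is $C_k$-free, and second that adding any nonedge creates a copy of $C_k$. The key observation is that $u$ together with $v_1,\dots,v_d$ forms a set $S$ of $d+1$ pairwise non-adjacent twins in $G_u$: any two members of $S$ share the neighborhood $N(v_1)$, and none lies in $N(v_1)$ (otherwise it would be adjacent to itself).

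First I would show that $G_u$ contains no $C_k$. Any such cycle must use $u$, because $G$ itself is $C_k$-free. If some twin $w \in S$ were missing from the cycle, substituting $w$ for $u$ would give a $C_k$ in $G$ (legitimate because $N(w)=N(u)$), a contradiction. Hence every element of $S$ lies on the cycle. Being pairwise non-adjacent, the $d+1$ twins partition the cycle into $d+1$ arcs (``gaps'') of lengths $\ell_i \ge 1$. The endpoints of each gap are cycle-neighbors of twins and therefore lie in $N(v_1)$. A gap of length $1$ contributes one vertex of $N(v_1)$ to the cycle, a gap of length $\ge 2$ contributes two, and these contributions are pairwise disjoint since distinct gaps are disjoint arcs of the cycle. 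Letting $a$ and $b$ count gaps of lengths $1$ and $\ge 2$ respectively, one has $a+b=d+1$ and $a+2b \le |N(v_1)| \le d$; subtracting gives $b \le -1$, a contradiction.

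For the saturation property, fix a nonedge $xy$ in $G_u$. If both endpoints lie in $V(G)$, then $xy$ is already a nonedge of $G$, and the $C_k$ produced by $C_k$-saturation of $G$ sits inside $G_u+xy$. Otherwise the nonedge has the form $uy$ with $y \in V(G) \setminus N(v_1)$. Since $d \ge 2$, I can choose an index $i$ with $v_i \ne y$; then $v_iy$ is a nonedge of $G$ (including the case $y \in \{v_1,\dots,v_d\}$, where twins are pairwise non-adjacent). By $C_k$-saturation of $G$, some $C_k$ in $G+v_iy$ uses the new edge $v_iy$. Relabelling the occurrence of $v_i$ on that cycle as $u$ produces a $C_k$ in $G_u+uy$: the two cycle-edges incident to this vertex become $uy$ (the added edge) and $uw$ for some $w \in N(v_i)=N(u)$, while all other edges already lie in $G$. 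No vertex is repeated because $u \notin V(G)$.

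The crux of the argument is the gap-counting step of the $C_k$-freeness proof, where one must carefully track both the distinctness of $N(v_1)$-vertices across gaps and invoke the bound $|N(v_1)| \le d$ to force $b \le -1$. Once this is in hand, the saturation direction is a routine twin-substitution, with the only subtlety being the choice of $i$ that makes $v_iy$ a nonedge of $G$, which is handled by the hypothesis $d \ge 2$.
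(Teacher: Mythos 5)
Your proof is correct and follows essentially the same route as the paper's: both establish $C_k$-freeness by locating a twin $v_j$ absent from a hypothetical $k$-cycle through $u$ and swapping it in (your gap-count $a+2b\le d$ with $a+b=d+1$ is a slightly more elaborate version of the paper's observation that at most $d$ pairwise non-adjacent twins can sit on the cycle), and both establish saturation by transferring a structure witnessing $C_k$-saturation of $G$ along a proxy nonedge at a twin. One small point in your favor: your saturation step explicitly invokes $d\ge 2$ to pick $v_i\neq y$, which cleanly covers the corner case $y\in\{v_1,\dots,v_d\}$ (the nonedge $uv_1$), whereas the paper's printed argument, which takes a path of length $k-1$ from $w$ to $v_1$ and replaces $v_1$ by $u$, tacitly assumes $w\neq v_1$.
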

\begin{proof}
Suppose $G$ is $C_k$-saturated and has $d$ vertices $v_1 , \dots , v_d$ with the same neighborhood 
$N(v_1)$, and $|N(v_1) | \leq d$.  Let $G_u$ be as in the statement of the lemma.  
If $G_u$ contains a $k$-cycle $C$, then $C$ must contain $u$.  Let $x u $ and $uy$ be the unique edges of $C$ that 
contain $u$.  If a vertex $v_i$ is on $C$, then two vertices in $N(v_i)$ are also on $C$.  Since $u$ is on $C$, at most 
$d-1$ of the vertices $v_1 , \dots , v_d$ can be on $C$ because 
$| N(v_1) | \leq d$.  Without loss of generality, assume that $v_d$ is not on $C$.
Then we can replace $xu$ and $uy$ on $C$ with $x v_d $ and $v_d y$ to get a  $k$-cycle that is in $G$. 
This is a contradiction, so $G_u$ must be $C_k$-free.  To finish the proof, we must show that 
if $w$ is a vertex that is not adjacent to $u$, then there is a path of length $k-1$ from $w$ to $u$.
If $w$ is not adjacent to $u$, then $w$ is not adjacent to $v_1$ and so there is a path $P$ of length 
$k-1$ from $w$ to $v_1$.  We then remove $v_1$ from $P$ and replace it with $u$ to get a path of 
length $k-1$ from $w$ to $u$.   
\end{proof}

\bigskip
Lemma \ref{duplicating 2} is very useful in proving upper bounds on 
$\textup{sat}(n , K_3 , C_{2k})$ because adding a vertex, in the way that is described in Lemma \ref{duplicating 2},
will not create a triangle.  Once we find a $K_3$-free $C_{2k}$-free graph on $m$ vertices 
with a subset of vertices having the same neighborhood as in Lemma \ref{duplicating 2}, 
we get $\textup{sat}(n , K_3 , C_{2k}) = 0$ for all $n \geq m$.  
The next step then is to construct a small $K_3$-free $C_{2k}$-saturated graph.  

Let $k \geq 2$ be an integer.  Let $G(4k)$ be the graph with $4k+2$ vertices whose 
vertex 
set is the disjoint union of five sets 
\[
\{ v ,u_1 , u_2 \} \cup X \cup Y \cup A \cup B
\]
where $X = \{x_1 , \dots , x_k \}$, $Y = \{ y_1 , \dots , y_k \}$, $A = \{a_1 , \dots , a_k \}$, 
$B = \{ b_1 , \dots , b_{k-1} \}$, and 
\begin{itemize}
\item $v$ is a degree 2 vertex with neighbors $u_1$ and $u_2$, 
\item $u_1$ is adjacent to $v$ and all vertices in $X$, and $u_2$ is adjacent to $v$ and all vertices in $A$, 
\item every vertex in $X$ is joined to every vertex in $\{u_1 \} \cup A \cup Y$, and 
\item every vertex in $A$ is joined to every vertex in $\{ u_2 \} \cup X \cup B$.
\end{itemize}
This completes the description of the first graph that is needed.  Now we introduce the second graph which is similar.  

Let $k \geq 1$ be an an integer.  
Let $G(4k+2)$ be the graph with $4k+4$ vertices whose vertex set, like $G(4k)$, is the disjoint union of five sets 
$\{v , u_1 , u_2 \} \cup X \cup Y \cup A \cup B$, except now 
$A = \{ a_1 , \dots , a_k , a_{k+1} \}$ and $B = \{b_1, \dots , b_{k-1} , b_k \}$.  
The adjacencies are defined in the same way as they were defined for $G(4k)$ so the 
edge sets are the same, except in $G(4k+2)$ we also join $b_k$ to every vertex in $A$, and 
join $a_{k+1}$ to every vertex in $\{ u_2 \} \cup X \cup B$.  

\begin{lemma}\label{lemma for G(4k)}
(i) If $k \geq 2$ is an integer, then $G(4k)$ is $K_3$-free and $C_{4k}$-saturated.  

\medskip
\noindent
(ii) If $k \geq 1$ is an integer, then $G(4k+2)$ is $K_3$-free and $C_{4k+2}$-saturated.  
\end{lemma}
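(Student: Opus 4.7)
My plan has three stages corresponding to the three properties required.

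\textbf{Triangle-freeness.} For every edge $pq$ of $G(4k)$ I will verify directly from the layer structure that $N(p)\cap N(q) = \emptyset$. The only nontrivial case is an $XA$-edge $x_i a_j$, where $N(x_i) = \{u_1\}\cup A \cup Y$ and $N(a_j) = \{u_2\}\cup X \cup B$ are visibly disjoint; the other edge types ($vu_1, vu_2, u_1 x_i, u_2 a_j, x_i y_l, a_j b_m$) are handled identically. The same check works for $G(4k+2)$, since $a_{k+1}$ and $b_k$ play exactly the same roles as the existing $A$ and $B$ vertices.

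\textbf{Absence of $C_{4k}$.} The argument is a double-counting. Let $C$ be a hypothetical $4k$-cycle; write $X_c = |V(C)\cap X|$ and similarly $Y_c, A_c, B_c$, and let $U_1, U_2, V \in \{0,1\}$ encode whether $u_1, u_2, v$ lie on $C$. Because $N(v) = \{u_1,u_2\}$, having $v\in C$ forces $U_1 = U_2 = 1$ and places both edges $vu_1, vu_2$ on $C$. Counting the cycle-edges incident to $X$ (from $u_1 X$, $XY$, $XA$) and to $A$ (from $u_2 A$, $AB$, $XA$) produces two linear relations in the parameters and in $e$, the number of $XA$-edges on $C$. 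Combined with $X_c+Y_c+A_c+B_c+U_1+U_2+V = 4k$ and the caps $X_c, Y_c, A_c \le k$, $B_c \le k-1$, this reduces to a very small integer system. After the substitution $X_c = k-\alpha$, $Y_c = k-\beta$, $A_c = k-\gamma$, $B_c = (k-1)-\delta$, the case $v\in C$ collapses to $2\beta+2\gamma = 3$, which has no integer solution by parity. In the case $v\notin C$, since $XA$ is the only cross-edge between $\{u_1\}\cup X\cup Y$ and $\{u_2\}\cup A\cup B$, the integer $e$ must be even; and if $e=0$ the cycle is trapped in one side, whose vertex count is at most $2k+1 < 4k$. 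So $e\geq 2$, which forces $\beta + \epsilon_1 \geq \alpha+2$ while the length equation forces the same sum to be at most $1$. The proof for $G(4k+2)$ is word-for-word the same, with the updated counts $|A|=k+1$, $|B|=k$, and target length $4k+2$.

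\textbf{Saturation.} For each non-edge $pq$ I must produce a path of length $4k-1$ from $p$ to $q$ in $G(4k)$, equivalently a $4k$-cycle through the added edge $pq$, which uses $4k$ of the $4k+2$ vertices (so it drops exactly two). The non-edges partition naturally by the layers of $p$ and $q$: same-layer ($XX$, $YY$, $AA$, $BB$), cross-layer ($YA$, $YB$, $XB$), or with one endpoint among $\{v, u_1, u_2\}$. In each class I will exhibit an explicit $(4k{-}1)$-path, the two dropped vertices being dictated by the layers of $p$ and $q$. The near-symmetry between the sides $\{u_1\}\cup X\cup Y$ and $\{u_2\}\cup A\cup B$ collapses most cases; the only asymmetry, $|Y|=k$ versus $|B|=k-1$, controls which vertex may be dropped. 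The argument for $G(4k+2)$ is parallel and in fact slightly simpler due to the restored symmetry.

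The main obstacle is the saturation step: each individual path is elementary to construct, but the number of non-edge classes makes the bookkeeping the most substantial part of the proof. The $C_{4k}$-freeness, by contrast, is conceptually delicate but compressible to a two-line parity obstruction once the incidence equations are set up.
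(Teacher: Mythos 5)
Your plan is sound in outline, and two of its three pieces are actually carried through. The triangle-freeness check is complete and correct as written. More interestingly, your argument for $C_{4k}$-freeness is correct and is a genuinely different route from the paper's proof: the paper first argues that some vertex of $Y$ can be assumed absent from a hypothetical $4k$-cycle, then traces the cycle explicitly and reaches a contradiction by a structural case analysis. You instead set up the incidence identities $e_{u_1X}+e_{XY}+e_{XA}=2X_c$, $e_{u_2A}+e_{AB}+e_{XA}=2A_c$, $e_{XY}=2Y_c$, $e_{AB}=2B_c$, use $e_{u_1X}=2U_1-\chi_1$ and the analogous relation at $u_2$, and reduce to a small integer system. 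I checked the arithmetic: in the case $v\in C$ one gets $\alpha+\beta+\gamma+\delta=2$ together with $\beta+\gamma=\alpha+\delta+1$, which indeed collapses to $2\beta+2\gamma=3$; and in the case $v\notin C$ one gets $\alpha+\beta+\gamma+\delta+\epsilon_1+\epsilon_2=1$ while $e_{XA}\ge 2$ forces $\beta+\epsilon_1\ge \alpha+2$, a contradiction. The same computations go through for $G(4k+2)$ with $|A|=k+1$, $|B|=k$ and target length $4k+2$. This is cleaner and more self-contained than the paper's argument, which is a nice simplification.

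The gap is in the saturation step, and it is a real one because that step is the bulk of the lemma. You correctly classify the non-edges into roughly eighteen types for $G(4k)$ (and similarly for $G(4k+2)$), and you state that for each type you ``will exhibit an explicit $(4k-1)$-path.'' But you never exhibit any of them. The paper's proof consists almost entirely of this enumeration: it writes down, one by one, an explicit $4k$-cycle through each representative non-edge, together with the two omitted vertices, and then separately for $G(4k+2)$. Producing those paths is not automatic; the asymmetry $|Y|=k$ versus $|B|=k-1$ that you flag is exactly what makes several cases (for example $u_1b_1$, $u_2b_1$, $b_1b_2$, $b_1y_1$) awkward, so the appeal to ``near-symmetry'' cannot be used to discharge them. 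In other words, your proof of saturation is a plan with the hard content left unwritten. To close the gap you would need to actually list a $(4k-1)$-path for a representative of each non-edge class in $G(4k)$ and analogously in $G(4k+2)$, verify each by inspection, and confirm that your class representatives in fact cover all non-edges up to the automorphisms that actually exist (permutations within $X$, within $Y$, within $A$, within $B$).
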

\begin{proof}
(i) First we show that for each nonedge $e$ of $G(4k)$, when $e$ is added to $G(4k)$ there is a $4k$-cycle 
that contains $e$.  
There are several cases to consider and it will be extremely 
useful to introduce some notation to make the argument concise.      
Suppose that
\begin{enumerate}
\item $\alpha_1 \alpha_2 $ is a nonedge of $G(4k)$,  
\item $C = \alpha_1 \alpha_2 \alpha_3 \alpha_4 \dots \alpha_{4k} \alpha_1$ is a $4k$-cycle in the graph obtained by 
adding $\alpha_1 \alpha_2$ to $G(4k)$,
\item $\beta_1 $ and $\beta_2$ are the unique pair of vertices in $G(4k)$ not on $C$.
\end{enumerate}
In this case, we will write 
\begin{center}
$\alpha_1 \alpha_2$~:~$\alpha_1 \alpha_2 \alpha_3 \alpha_4 \dots \alpha_{4k} \alpha_1$, 
~~~ $\{ \beta_1 , \beta_2 \}$
\end{center} 
Now we use this notation and list $4k$-cycles we get when adding nonedges to $G(4k)$.

Starting with nonedges on $v$, 

\medskip
$v a_1$~:~
$v a_1 b_1 a_2 b_2 a_3 \dots b_{k-1} a_k x_k y_{k-1} x_{k-1} y_{k-2} \dots y_1 x_1 u_1 v$, 
~~ $\{ y_k, u_2 \}$ 

\medskip
$v b_1$~:~$v b_1 a_1 x_1 y_1 x_2 y_2 \dots y_{k-1} x_k a_k b_{k-1} a_{k-1} b_{k-2} \dots b_2 a_2 u_2 v$, 
~~ $\{ y_k , u_1 \}$

\medskip
$v x_1$~:~$v x_1 y_1 x_2 y_2 x_3 \dots x_{k-1} y_{k-1} x_k a_k b_{k-1} a_{k-1} b_{k-2} \dots a_2 b_1 a_1 u_2 v$, 
~~ $\{ y_k , u_1 \}$

\medskip
$v y_1$~:~$v y_1 x_2 y_2 x_3 \dots x_{k-1} y_{k-1} x_k a_k b_{k-1} a_{k-1} b_{k-2} \dots b_2 a_2 b_1 a_1 x_1 u_1 v$, 
~~ $\{ y_k , u_2 \}$

\medskip

This covers all possible missing edges on $v$ and we no longer check missing edges that contain $v$.
Moving on to those containing $u_1$, the possibilities we must consider are adding the nonedges 
$u_1z$ where $z \in \{ u_2 \}  \cup A \cup B \cup  Y$. 

\medskip
$u_1 u_2$~:~
$u_1 u_2 a_1 b_1 a_2 b_2 a_3 \dots b_{k-1} a_k x_k y_{k-1} x_{k-1} y_{k-2} \dots y_1 x_1 u_1$, 
~~ $\{ y_k , v \}$

\medskip
$u_1 a_1$~:~
$u_1 a_1 b_1 a_2 b_2 \dots a_{k-2} b_{k-2} a_{k-1} x_1 y_1 x_2 y_2 \dots x_{k-1} y_{k-1} x_k a_k u_2 v u_1$, 
~~ $\{ y_k , b_{k-1} \}$

\medskip
$u_1 b_1$~:~
$u_1 b_1 a_1 u_2 a_2 b_2 a_3 b_3 \dots b_{k-1} a_k x_k y_{k-1} x_{k-1} y_{k-2} \dots y_1 x_1 u_1 $, 
~~ $\{ y_k , v \}$

\medskip
$u_1 y_1$~:~
$u_1 y_1 x_2 y_2 x_3 y_3 \dots y_{k-1} x_k a_k b_{k-2} a_{k-1} b_{k-3} \dots a_3 b_1 a_2 x_1 a_1 u_2 v u_1$, 
~~$\{y_k , b_{k-1} \}$

\medskip

This covers all possible missing edges on $u_1$, and we no longer check missing edges that contain $u_1$ or $v$.  
Concerning missing edges on $u_2$, we must check nonedges of the form $u_2 z$ with $z \in X \cup Y \cup B$. 

\medskip

$u_2 x_1$~:~
$u_2 x_1 y_1 x_2 y_2 x_3 y_3 \dots y_{k-2} x_{k-1} a_1 b_1 a_2 b_2 \dots a_{k-1}b_{k-1} a_k x_k u_1 v u_2$, 
~~$\{ y_{k-1} , y_k \}$

\medskip
$u_2 y_1$~:~
$u_2 y_1 x_k y_{k-1} x_{k-1} y_{k-2} \dots y_2 x_2 u_1 x_1 a_k b_{k-1} a_{k-1} b_{k-2} \dots b_2a_2 b_1 a_1 u_2$, 
~~$\{ y_k , v \}$,

\medskip
$u_2 b_1$~:~
$u_2 b_1 a_1 x_1 a_2 b_2 a_3 b_3 \dots a_{k-1} b_{k-1} a_k x_2 y_1 x_3 y_2 x_4 \dots y_{k-2} x_k u_1 v u_2$, 
~~$\{ y_{k-1} , y_k \}$

\medskip

The remaining nonedges all have their endpoints in $A \cup B \cup X \cup Y$.  A careful check 
shows that the list below covers the remaining cases.  

\medskip

$x_1 b_1$~:~
$x_1 b_1 a_2 b_2 a_3 b_3 \dots b_{k-1} a_k x_k y_{k-2} x_{k-1} y_{k-3} x_{k-2} \dots y_2 x_3 y_1 x_2 a_1 u_2 v u_1 x_1$,
~~$\{ y_{k-1} , y_k \}$

\medskip
$x_1 x_2$~:~
$x_1 x_2 y_3 x_3 y_4 x_4 \dots y_k x_k u_1 v u_2 a_1 b_1 a_2 b_2 \dots b_{k-1} a_k x_1$,
~~$\{ y_1 , y_2 \}$

\medskip
$a_1 a_2$~:~
$a_1 a_2 b_1 a_3 b_2 a_4 \dots b_{k-3} a_{k-1} b_{k-2} a_k x_k y_{k-1} x_{k-1} \dots y_2 x_2 y_1 x_1 u_1 v u_2 a_1$,
~~$ \{ y_k , b_{k-1} \}$

\medskip
$a_1 y_1$~:~
$a_1 y_1 x_2 y_2 x_3 y_3 \dots x_{k-1} y_{k-1} x_k a_k u_2 v u_1 x_1 a_{k-1} b_{k-2} a_{k-2} b_{k-3} \dots b_1 a_1$,
~~$ \{ y_k , b_{k-1} \}$

\medskip
$b_1 b_2$~:~
$b_1 b_2 a_2 b_3 a_3 b_4 \dots  b_{k-1} a_{k-1} x_k a_k u_2 v u_1 x_{k-1} y_{k-2} x_{k-2} y_{k-3} 
\dots y_2 x_2 y_1 x_1 a_1 b_1$,
$\{ y_{k-1} , y_k \}$

\medskip
$ y_1 y_2$~:~
$ y_1 y_2 x_3 y_3 x_4  \dots x_{k-1}y_{k-1} x_k a_k b_{k-2} a_{k-1} b_{k-3} \dots b_2 a_3 b_1 a_2 x_1 u_1 v u_2 a_1 x_2 y_1$, 
$\{ y_k , b_{k-1}  \}$

\medskip
$b_1 y_1$~:~
$ b_1 y_1 x_1 y_2 x_2 \dots y_{k-1} x_{k-1} u_1 x_k a_k b_{k-1} a_{k-1} b_{k-2} \dots a_3 b_2 a_2 u_2 a_1 b_1 $, 
~~$\{ v , y_k \}$

\bigskip

We finish the proof of (i) by showing that $G(4k)$ is $C_{4k}$-free.  Suppose, for contradiction, that $C$ is a $4k$-cycle in 
$G(4k)$.  Observe that any cycle of length more than 
$2k$ cannot contain all vertices in $Y$, because the only cycles in $G(4k)$ that contain all vertices in $Y$
are cycles of length $2k$ having $k$ vertices in $X$ and $k$ vertices in $Y$.  
Without loss of generality, assume $y_k$ is not on $C$, and let 
\[
Y' = Y \backslash \{ y_k \}.
\]
If $u_1 $ is not on $C$, then $v$ cannot be on 
$C$, but then $C$ has less than $4k$ vertices.  Thus, $u_1$ is on $C$ and similarly, $u_2$ is also on $C$.
We consider two cases depending on whether or not $v$ is on $C$.

If $v$ is not on $C$, then $C$ must contain all vertices in $\{u_1 , u_2 \} \cup A \cup B \cup X \cup Y'$. 
To contain all vertices in $Y'$, $C$ must have a subpath of length $2k-1$ that starts and ends in $X$, and alternates between vertices in 
$X$ and $Y'$.  By relabeling vertices if necessary, we may assume that 
\[
P = x_1 y_1 x_2 y_2 \dots x_{k-1} y_{k-1} x_k
\]
is this subpath.  Now $u_1$ is on $C$ but $v$ is not on $C$, and the only neighbors of $u_1$ in 
the union $\{u_1 , u_2 \} \cup A \cup B \cup X \cup Y'$ are vertices in $X$.
Therefore, $x_1$ and $x_k$ must be joined to $u_1$ on 
$C$.  This is a contradiction as adding $u_1$ to the endpoints of $P$ closes the cycle $C$ before it touches vertices in $A$.
We conclude that $v$ is on $C$, and so $u_1$ and $u_2$ must also be on $C$.
The rest of the vertices on $C$ are all but one vertex in $A \cup B \cup X \cup Y'$.  
Since $|A| = |X|$ and $|B | = |Y'|$, the sets $X,Y'$ and $A$, $B$ are symmetric.  We may assume, 
without loss of generality that $C$ contains all vertices in $Y'$, otherwise if $C$ misses a vertex in $Y'$ and $B$, 
then $C$ has at most $4k -1$ vertices.  As $C$ contains all vertices in $Y'$, $C$ contains the subpath 
\[
P_1 = a_1 u_2 v u_1 x_1 y_1 x_2 y_2 \dots x_{k-1}y_{k-1} x_k a_2
\]
where we relabel vertices within the sets $A$, $X$, and $Y$ as needed.  The path $P_1$ contains all vertices in $X$ and the edges
touching $u_1$ and $u_2$, so all of the other edges of $C$ that are not in $P_1$ must have one endpoint in $A$ and the other 
in $B$.  If $P_2$ is the subpath of $C$ from $a_2$ to $a_1$ that is different from $P_1$, then $P_2$ must 
contain an even number of edges as it starts and ends in $A$, and alternates between vertices in $A$ and $B$.
This is a contradiction however since $P_1$ has $2k +3$ edges.  

We have shown that no $4k$-cycle exists in $G(4k)$ 
so $G(4k)$ is $C_{4k}$-free.

\bigskip
\noindent
(ii) Now we show that $G(4k+2)$ is $C_{4k+2}$-saturated.  Recall that $G(4k+2)$ is almost the same as 
$G(4k)$, except $A$ and $B$ contain one more vertex each so $A = \{a_1 , \dots , a_k , a_{k + 1} \}$, 
and $B = \{ b_1 , \dots ,b_{k-1},  b_k \}$.  We use the same notation as used in (i),
and we give our list of 
nonedges and $(4k+2)$-cycles containing the nonedges 
here:


\medskip
$v a_1$~:~
$v a_1 u_2 a_2 b_1 a_3 b_2 \dots a_{k-1} b_{k-2} a_k b_{k-1} a_{k+1} x_k y_{k-1} x_{k-1} y_{k-2} \dots x_2 y_1 x_1 u_1 v$,
~~ $\{ b_k, y_k \}$ 

\medskip
$v b_1$~:~$v b_1 a_1 b_2 a_2 \dots b_{k-1} a_{k-1} b_k a_k x_k y_{k-1} x_{k-1} y_{k-2} \dots 
x_2 y_1 x_1 a_{k+1} u_2 v$,
~~ $\{ u_1 , y_k \}$

\medskip
$v x_1$~:~$v x_1 y_1 x_2 y_2 \dots   y_{k-1} x_k a_{k+1} b_k a_k b_{k-1} \dots a_2 b_1 a_1 u_2 v$, 
~~ $\{ u_1 , y_k \}$

\medskip
$v y_1$~:~$v y_1 x_1 a_1 b_1 a_2 b_2 \dots a_{k-1} b_{k-1} a_k b_k a_{k+1} x_2 y_2 x_3 y_3 
\dots x_{k-1} y_{k-1} x_k u_1 v$,
~~ $\{ u_2 , y_k \}$

\medskip
$u_1 u_2$~:~
$u_1 u_2 a_1 b_1 a_2 b_2 \dots a_{k-1} b_{k-1} a_k b_k a_{k+1} x_1 y_1 x_2 y_2 \dots x_{k-1} y_{k-1} x_k u_1$,
~~ $\{ v , y_k \}$

\medskip
$u_1 a_1$~:~
$u_1 a_1 x_1 y_1 x_2 y_2 \dots x_{k-1} y_{k-1} x_k a_{k+1} b_k a_k b_{k-1} a_{k-1} \dots a_3 b_2 a_2 u_2 v u_1$,
~~ $\{ y_k , b_1 \}$

\medskip
$u_1 b_1$~:~
$u_1 b_1 a_1 b_2 a_2 b_3 \dots a_{k-2} b_{k-1} a_{k-1} b_k a_k u_2 a_{k+1} x_1 y_1 x_2 y_2 \dots y_{k-1}x_k u_1$,
~~ $\{ y_k , v \}$

\medskip
$u_1 y_1$~:~
$u_1 y_1 x_1 y_2 x_2 y_3 \dots x_{k-1} y_k x_k a_1 b_1 a_2 b_2 \dots a_{k-1} b_{k-1} a_k u_2 v u_1$,
~~$\{a_{k+1} , b_{k} \}$

\medskip

$u_2 x_1$~:~
$u_2 x_1 a_1 b_1 a_2 b_2 \dots a_k b_k a_{k+1} x_2  y_1 x_3 y_2 \dots x_{k-1} y_{k-2} x_k u_1 v u_2$,
~~$\{ y_{k-1} , y_k \}$

\medskip
$u_2 y_1$~:~
$u_2 y_1 x_1 u_1 x_2 y_2 x_3 y_3 \dots x_{k-1} y_{k-1} x_k a_1 b_1 a_2 b_2 \dots a_k b_k a_{k+1} u_2$,
~~$\{ y_k , v \}$

\medskip
$u_2 b_1$~:~
$u_2 b_1 a_1 b_2 a_2  \dots  b_{k} a_k x_1 y_1 x_2 y_2  \dots x_{k-1} y_{k-1} x_k u_1 v u_2 $, 
~~$\{ a_{k+1} , y_k \}$

\medskip
$x_1 b_1$~:~
$x_1 b_1 a_1 b_2 a_2 b_3 a_3 \dots b_k a_k u_2 v u_1 x_2 y_1 x_3 y_2 \dots x_{k-1} y_{k-2} x_k y_{k-1} x_1$,
~~$\{ y_k , a_{k+1} \}$

\medskip
$x_1 x_2$~:~
$x_1 x_2 y_1 x_3 y_2 x_4 y_3 \dots y_{k-2} x_k a_1 b_1 a_2 b_2 \dots a_k b_k a_{k+1} u_2 v u_1 x_1$,
~~$\{ y_k , y_{k-1} \}$

\medskip
$a_1 a_2$~:~
$a_1 a_2 b_1 a_3 b_2 a_4 \dots a_k b_{k-1} a_{k+1} x_1 y_1 x_2 y_2 \dots x_{k-1} y_{k-1} x_k u_1 v u_2 a_1$,
~~$ \{ b_k , y_k \}$

\medskip
$a_1 y_1$~:~
$a_1 y_1 x_1 y_2 x_2 \dots y_k x_k u_1 v u_2 a_2 b_1 a_3 b_2 \dots a_k b_{k-1} a_1$,
~~$ \{ a_{k+1} , b_k \}$

\medskip
$b_1 b_2$~:~
$b_1 b_2 a_1 b_3 a_2 b_4 \dots a_{k-2} b_k a_{k-1} x_1 y_1 x_2 y_2 \dots x_{k-1} y_{k-1} x_k u_1 v u_2 a_k b_1$,
~~$\{ a_{k+1} , y_k \}$

\medskip
$ y_1 y_2$~:~
$ y_1 y_2 x_1 y_3 x_2 \dots y_k x_{k-1} u_1 v u_2 a_1 b_1 a_2 b_2 \dots a_{k-1} b_{k-1} a_k x_k y_1$,
~~$\{ b_k ,a_{k+1}  \}$

\medskip
$b_1 y_1$~:~
$b_1 y_1 x_1 y_2 x_2 y_3 \dots y_{k-1} x_{k-1} u_1 x_k a_{k+1} u_2 a_k b_k a_{k-1} b_{k-1} \dots a_1 b_1$, 
~~$\{ v , y_k \}$
  
\bigskip

To show $G(4k+2)$ is $C_{4k+2}$-free, we again use proof by contradiction.
Suppose $C$ is a $(4k+2)$-cycle in $G(4k+2)$.  If $C$ contains a subpath of the form $aba'$ where $a , a' \in A$ and $b \in B$, 
then by replacing $aba'$ with $a$, we can obtain a cycle of length $4k$ in $G(4k)$ which we have already shown is impossible.
This means that $C$ cannot contain any vertices in $B$, and we also know that $C$ must miss at least one vertex in $Y$ (the same argument 
used to show this for $G(4k)$ applies here as well since 
$|X| = |Y|$ in $G(4k+2)$).
Thus, $k = |B| \leq 1$. It is then easy to check that $G(6)$ is $C_6$-free.  
\end{proof}

\bigskip

We now have all of the lemmas needed to prove 
Theorem  \ref{main theorem k3 against c2k}.  

\begin{proof}[Proof of Theorem  \ref{main theorem k3 against c2k}]
By the comments preceding 
the statement of Theorem \ref{main theorem k3 against c2k}, we only need to consider cycles of even length. 
That is, we must show that for all $n \geq 2k +2 \geq 6$, 
there is a $K_3$-free $C_{2k}$-saturated graph 
on $n$ vertices. By Lemma \ref{duplicating 2}, it is enough to find a $K_3$-free $C_{2k}$-saturated 
graph with a set of $d \geq 2$ vertices having the same neighborhood whose size is at most $d$.
When $k$ is even, say $k = 2 r$, then the graph $G( 4r )$ has $4r+2$ vertices and is $K_3$-free and 
$C_{2k}$-saturated by Lemma \ref{lemma for G(4k)}.  The vertices in $Y$ form a set of 
$k$ vertices that all have the same neighborhood $X$ which has size $k$.  By Lemma \ref{duplicating 2}, we 
may duplicate vertices in $y$ as many times as needed to obtain a $K_3$-free $C_{2k}$-saturated graph 
on $n \geq 2k+2$ vertices.  When $k$ is odd, say $k = 2r + 1$, the same argument applies except we use the 
graph $G( 4r + 2)$.   
\end{proof}


\subsection{4-cycles in $C_k$-saturated graphs, $k > 6$}

In this subsection, we consider how many $C_4$'s must be in a $C_k$-saturated graph.  We will assume that $k \geq 5$ 
throughout.  
Our approach does not use Lemma \ref{duplicating 2} because 
when a vertex is duplicated, we will create new $C_4$'s.  
Instead, we use 
the idea of $C_k$-builders introduced by Barefoot et.\ al.\ \cite{barefoot}.
 A graph $G$ is a \emph{$C_k$-builder} if $G$ is $C_k$-saturated, and there is a 
 distinguished vertex $v$ in $G$ such that 
 if $v$ in one copy of $G$ is identified with $v$ in the other copy 
 of $G$, then the resulting graph is $C_k$-saturated.  
 Barefoot et.\ al.\ \cite{barefoot} use $C_k$-builders to obtain 
 upper bounds on $\textup{sat}(n , C_k)$ for different values of $k$.  They were also used 
 by Zhang, Luo, and Shigno \cite{zls} in the special case $k = 6$.  
 
 If $G$ is a $C_k$-builder with distinguished vertex $v$ and $G$ is $C_4$-free, 
 then the graph obtained by taking two copies of $G$ and identifying $v$ is $C_k$-saturated and $C_4$-free.
 This observation, like in the case of $\textup{sat}(n , K_3 , C_{2k})$, allows us to reduce the problem 
 to finding a small $C_4$-free $C_k$-builder.   If $G$ is a $C_k$-builder with distinguished vertex $v$,
 then for any ordered pair of vertices $(u_1,u_2)$ where $u_1 \neq v$ and $u_2 \neq v$, 
 there must be positive integers $k_1$, $k_2$ with $k_1 + k_2 = k -1$ 
 and $u_i$ is joined to $v$ by a path of length $k_i$ ($i=1,2$).  We generalize this observation to 
 two different builders in the next lemma.
 
 \begin{lemma}\label{joining builders}
 Let $m_1$ and $m_2$ be positive integers.  
 Let $G_1$ and $G_2$ be $C_k$-builders with distinguished vertices $v_1$ and $v_2$, respectively.
 Suppose for every ordered pair of vertices $(u , w) \in (G_1 \backslash v_1 ) \times (G_2 \backslash v_2)$ 
 there is a path of length $k_1$ from $u$ to $v_1$ in $G_1$, and a path of length $k_2$ from $w$ to $v_2$ 
 in $G_2$ with $k_1 + k_2 = k-1$.  If $G$ is the graph obtained by taking $m_1$ copies of $G_1$ and $m_2$ copies of 
 $G_2$ and identifying each of the $m_1$ copies of $v_1$ and the $m_2$ copies of $v_2$ all into a single vertex $v$, 
 then $G$ is $C_k$-saturated and has 
 \[
 m_1 ( |V (G_1)  | -1) + m_2 ( |V (G_2) |-1) + 1
 \]
 vertices.  Furthermore, if each of $G_1$ and $G_2$ are $H$-free where $H$ is a graph 
 with no cut vertex, then $G$ is also $H$-free.    
 \end{lemma}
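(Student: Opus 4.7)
The plan is to first dispatch the easy vertex count and then handle saturation and $H$-freeness using the structural fact that the identification vertex $v$ is a cut vertex of $G$ (whenever $m_1+m_2 \ge 2$, which is the only nontrivial case). The vertex count is immediate: each of the $m_1$ copies of $G_1$ contributes $|V(G_1)|-1$ new vertices besides $v$, each of the $m_2$ copies of $G_2$ contributes $|V(G_2)|-1$, and $v$ itself is counted once, giving the stated formula.

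For $C_k$-freeness, I would observe that a cycle is $2$-connected, so any cycle in $G$ lies inside a single block. Since the blocks of $G$ are precisely the individual copies of $G_1$ and $G_2$, and each such copy is $C_k$-free by hypothesis (being a $C_k$-builder), no $C_k$ exists in $G$. The same argument will handle the last claim of the lemma: if $H$ is a graph with no cut vertex, then $H$ is $2$-connected, so any copy of $H$ in $G$ is contained in a single block, i.e., in some copy of $G_1$ or $G_2$, contradicting $H$-freeness of $G_i$.

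The main work is verifying that adding any nonedge $uw$ to $G$ creates a $C_k$. I will case-split on where the endpoints sit. If $u$ and $w$ lie in the same copy of $G_i$, then saturation is inherited directly from $G_i$ being $C_k$-saturated. If $u$ and $w$ lie in two distinct copies of $G_1$, then since $G_1$ is a $C_k$-builder, the graph obtained by identifying two copies of $G_1$ at $v_1$ is $C_k$-saturated; the nonedge $uw$ in that two-copy subgraph produces a $C_k$, which sits inside $G$. The symmetric argument handles two distinct copies of $G_2$. The remaining case, where $u$ lies in a copy of $G_1$ and $w$ in a copy of $G_2$, is exactly what the extra hypothesis of the lemma is designed for: it provides integers $k_1, k_2$ with $k_1 + k_2 = k - 1$, a path $P_1$ of length $k_1$ from $u$ to $v_1$ inside the relevant copy of $G_1$, and a path $P_2$ of length $k_2$ from $w$ to $v_2$ inside the relevant copy of $G_2$. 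Since all distinguished vertices are identified to the single vertex $v$ in $G$, concatenating $P_1$, $P_2$, and the new edge $uw$ yields a cycle of length $k$.

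The only delicate step is verifying that the $C_k$ produced in each case actually sits inside the full graph $G$ rather than some two-copy subgraph that might have edges $G$ does not. This is immediate since the two-copy subgraphs used above are induced subgraphs of $G$ (the blocks are disjoint outside $v$), so no spurious edges are involved and no additional $C_k$'s are introduced by the other copies. The expected main obstacle is just carefully tracking that the cycle produced in the mixed $G_1$--$G_2$ case uses the hypothesis on path lengths correctly; once that is set up, the rest is bookkeeping via the cut-vertex structure of $G$.
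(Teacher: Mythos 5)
Your proposal is correct and follows essentially the same approach as the paper: case-split on the location of the two nonadjacent vertices (same copy, two copies of the same builder, one copy of each builder), and invoke the cut-vertex structure at $v$ for the $H$-freeness claim. You are in fact slightly more careful than the published proof in that you explicitly address $C_k$-freeness via two-connectedness of cycles, which the paper glosses over. One small imprecision: the blocks of $G$ need not be exactly the copies of $G_1$ and $G_2$ (a copy of $G_i$ could itself have cut vertices), but the conclusion you actually need --- that any two-connected subgraph of $G$ lies entirely within a single copy, because $v$ is a cut vertex separating the copies --- is correct and is the same fact the paper uses.
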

 \begin{proof}
 Let $G$ be the graph described in the lemma.  It is clear that $G$ has 
 \[
 m_1 ( |V (G_1)  | -1) + m_2 ( |V (G_2) |-1) + 1
 \]
 vertices.  Consider now a pair of nonadjacent vertices $x$ and $y$ in 
 $G$.  If this pair belongs to the same copy of some $G_i$, $i=1$ or $i=2$, then they are joined by a path of length 
 $k-1$ since $G_i$ is $C_k$-saturated.  
 If $x$ and $y$ are in different copies of $G_i$, then since $G_i$ is a $C_k$-builder, there is a path of length $k_1$ from 
 $x$ to $v$ in the copy of $G_i$ that contains $x$, and a path of length $k_2$ from $y$ to $v$ in the copy of $G_i$ that contains $y$, 
 where $k_1 + k _2 = k-1$.  Finally, assume that 
 $x$ is in a copy of $G_1$ and $y$ is in a copy of $G_2$.  Then, by hypothesis, 
 there is a path of length $k-1$ from 
 $x$ to $y$ that uses the vertex $v$.  
 
 Lastly, since $v$ is a cut-vertex, any 
 copy of $H$ in $G$ must be contained in some copy of $G_1$ or $G_2$.  
 \end{proof}

\bigskip

Let us call a pair of $C_k$-builders satisfying the conditions of Lemma \ref{joining builders} \emph{compatible}.  

\begin{lemma}\label{coprime builders}
Let $H$ be a graph with no cut vertex and 
$k \geq 3$ be an integer. If $G_1$ and $G_2$ are 
$H$-free $C_k$-builders that are compatible and 
$|V(G_1) |-1$ is relatively prime to $|V(G_2) |-1$, 
then 
\[
\textup{sat}(n , H , C_k) = 0
\]
for all $n  \geq n_0$ where $n_0$ depends 
only on $|V(G_1)|$ and $|V(G_2)|$.
\end{lemma}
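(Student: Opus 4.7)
The plan is to realize every sufficiently large $n$ as the vertex count of a graph built by the gluing procedure of Lemma \ref{joining builders} applied to copies of $G_1$ and $G_2$. Setting $a_i = |V(G_i)| - 1$ for $i = 1, 2$, any such graph obtained from $m_1$ copies of $G_1$ and $m_2$ copies of $G_2$ glued at the distinguished vertices has exactly $m_1 a_1 + m_2 a_2 + 1$ vertices, is $C_k$-saturated, and is $H$-free. So the task reduces to expressing $n - 1$ as a positive integer combination $m_1 a_1 + m_2 a_2$ for all sufficiently large $n$.

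The key number-theoretic input is the Sylvester--Frobenius theorem (the ``Chicken McNugget'' theorem): since $\gcd(a_1, a_2) = 1$, every integer strictly greater than $a_1 a_2 - a_1 - a_2$ can be written as a non-negative combination of $a_1$ and $a_2$. To accommodate the positivity requirement $m_1, m_2 \geq 1$ in Lemma \ref{joining builders}, I would apply this to $n - 1 - a_1 - a_2$ rather than $n - 1$: whenever $n - 1 - a_1 - a_2 > a_1 a_2 - a_1 - a_2$, i.e.\ $n \geq a_1 a_2 + 2$, we can write $n - 1 - a_1 - a_2 = (m_1 - 1) a_1 + (m_2 - 1) a_2$ with non-negative exponents, and then $m_1, m_2 \geq 1$ satisfy $n = m_1 a_1 + m_2 a_2 + 1$.

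Taking $n_0 = a_1 a_2 + 2$, this recipe produces an $n$-vertex $H$-free $C_k$-saturated graph for every $n \geq n_0$, and $n_0$ depends only on $|V(G_1)|$ and $|V(G_2)|$, as required. The whole argument is essentially bookkeeping once Lemma \ref{joining builders} is in place; the only subtlety worth flagging is that Lemma \ref{joining builders} requires both $m_i$ to be strictly positive, which is why I would work with $n - 1 - a_1 - a_2$ instead of $n - 1$ in the Frobenius step rather than with the bare threshold $a_1 a_2 - a_1 - a_2$.
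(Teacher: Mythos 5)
Your proposal is correct and takes essentially the same route as the paper's proof: apply Lemma \ref{joining builders} to glue $m_1$ copies of $G_1$ and $m_2$ copies of $G_2$, note that $H$-freeness passes to the glued graph because $H$ has no cut vertex, and then use the coprimality of $a_1=|V(G_1)|-1$ and $a_2=|V(G_2)|-1$ plus the Sylvester--Frobenius theorem to realize every sufficiently large $n-1$ as $m_1 a_1 + m_2 a_2$. You are in fact a bit more careful than the paper: the paper says ``nonnegative integers $m_1,m_2$'' even though Lemma \ref{joining builders} is stated for positive $m_i$, whereas your shift by $a_1+a_2$ cleanly enforces $m_1,m_2\geq 1$ and yields an explicit threshold $n_0=a_1 a_2 + 2$. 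This is a refinement of the bookkeeping, not a different argument.
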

\begin{proof}
By Lemma \ref{joining builders}, 
the graph obtained by identifying the 
distinguished vertices in $m_1$ copies of $G_1$ and 
$m_2$ copies of $G_2$ into a single vertex is $C_k$-saturated.
This graph is also $H$-free since each of the 
builders $G_1$ and $G_2$ are $H$-free, so no copy of 
$H$ is contained in a single copy of a builder.
If we find a copy of $H$ whose vertices are
in more than one builder, then $H$ contains a 
cut vertex which is not possible. Therefore, 
we have an $H$-free $C_k$-saturated graph on 
\begin{equation}\label{builder eq}
 m_1 ( |V (G_1)  | -1) + m_2 ( |V (G_2) |-1) + 1
\end{equation}
vertices.  Since $|V(G_1)|-1$ and $|V (G_2) |-1$ are 
relatively prime, all sufficiently large positive 
integers can be written in the form (\ref{builder eq}) 
for some nonnegative integers $m_1$ and $m_2$.  
\end{proof}

\bigskip

\begin{proof}[Proof of Theorem \ref{main theorem c4 against ck}]
By Lemma \ref{coprime builders}, it is enough 
to find compatible $C_4$-free $C_k$-builders such that the respective number of vertices minus one are coprime.
The adjacency matrices of 
$C_4$-free compatible $C_k$-builders for $k \in \{7,8,9,10 \}$ 
are given in the appendix.  
The computations 
establishing that the corresponding graphs 
have the needed properties was done using Mathematica 
\cite{mathematica}.  
\end{proof}

\bigskip

\textbf{Remark:} The lower bound on $n$ in Theorem \ref{main theorem c4 against ck} comes from the 
number of vertices in the compatible $C_k$-builders.  The worst case is $k = 10$ where 
our builders have 12 and 13 vertices.  A short computation shows that every integer $n \geq 111$ 
can be written in the from $1 + 11m_1 + 12 m_2$ for some nonnegative integers $m_1$ and $m_2$.  
As mentioned in the introduction, we have verified computationally that 
$\textup{sat}(n , K_3 , C_7 )  = 0$ and $\textup{sat}(n , K_3 , C_8) = 0$ for the cases 
not covered by Theorem \ref{main theorem c4 against ck} \cite{mathematica}.


\subsection{4-cycles in $C_6$-saturated graphs}

In this subsection we discuss 4-cycles in 
$C_6$-saturated graphs.  Like 
$\textup{sat}(n , K_3 , C_4)$, we were unable 
to show that $\textup{sat}(n , C_4 , C_6) > 0$ 
for infinitely many $n$.  Using a computer search, we were able to find $C_4$-free $C_6$-saturated 
graphs for $n \in \{14, 15,18, 20,22, 26 \}$.  
The graphs on 26 vertices that 
are $C_4$-free and $C_6$-saturated are 
two of the three 3-regular graphs of girth 7 \cite{mmwebsite}.
The Coxeter graph on 28 vertices is also 
has girth 7 and is $C_6$-saturated.  

Using a $C_6$-builder with 11 vertices and 
exactly two copies of $C_4$, we 
can prove the following upper bound 
on $\textup{sat}(n , C_4 , C_6)$.

\begin{theorem}\label{c4 against c6}
If $t \geq 1$ is an integer, then 
\[
\textup{sat}(10t + 1 , C_4 , C_6) \leq 2t.
\]
\end{theorem}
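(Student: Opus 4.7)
The plan is to exhibit an $11$-vertex $C_6$-builder $B$ with distinguished vertex $v$ that contains exactly two copies of $C_4$ and is compatible with itself in the sense of Lemma \ref{joining builders}. Granted such a $B$, form $G_t$ by taking $t$ copies of $B$ and identifying all of their distinguished vertices into a single vertex $v$; then $G_t$ has $1 + 10t$ vertices. Applying Lemma \ref{joining builders} with $G_1 = G_2 = B$ and $m_1 + m_2 = t$, the graph $G_t$ is $C_6$-saturated. Since $v$ is a cut vertex of $G_t$ and $C_4$ has no cut vertex, every $C_4$ in $G_t$ lies entirely inside a single copy of $B$, so $G_t$ contains exactly $2t$ copies of $C_4$. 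This gives $\textup{sat}(10t+1, C_4, C_6) \le 2t$.

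The bulk of the work is the construction of $B$. Following the strategy used for Theorem \ref{main theorem c4 against ck}, I would search (likely by computer) for an $11$-vertex graph $B$ with a marked vertex $v$ satisfying three properties simultaneously: (i) $B$ is $C_6$-saturated, so that every nonedge of $B$ lies on a $C_6$ once added; (ii) $B$ contains exactly two copies of $C_4$; and (iii) for every ordered pair $(u, w)$ of vertices distinct from $v$, there exist positive integers $k_1, k_2$ with $k_1 + k_2 = 5$ together with a $u$-to-$v$ path of length $k_1$ and a $w$-to-$v$ path of length $k_2$ in $B$, so that two copies of $B$ glued at $v$ remain $C_6$-saturated. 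Once such a $B$ is found, all three properties are straightforwardly verifiable on $11$ vertices: (i) requires checking each of the at most $\binom{11}{2}$ nonedges for a closing $C_6$, (ii) is a direct enumeration of $4$-cycles, and (iii) is a short path-length analysis from $v$, all checkable by hand or using Mathematica as in \cite{mathematica}.

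The main obstacle is reconciling (i) with (ii): $C_6$-saturation forces many short paths, which tends to create a large number of $C_4$'s, so a delicate construction is required. A natural starting point is to build $B$ around a sparse core of girth close to $5$, say a $7$-cycle with a few carefully placed chords, and arrange the two unavoidable $C_4$'s to sit on a common edge that is incident to the distinguished vertex $v$, so that $v$ lies on both cycles but remains a cut vertex after gluing. The distinguished vertex $v$ should also have several short-path ``handles'' of lengths $1$ and $2$ (and indirectly $3$ and $4$) to other vertices, to ensure condition (iii). Once a candidate $B$ passing (i)--(iii) is identified, the remainder of the proof is immediate from Lemma \ref{joining builders} and the cut-vertex observation above.
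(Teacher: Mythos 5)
Your high-level architecture matches the paper's exactly: find an $11$-vertex $C_6$-builder $B$ with distinguished vertex $v$ and exactly two copies of $C_4$, glue $t$ copies of $B$ at $v$ to obtain a graph on $10t+1$ vertices, invoke Lemma \ref{joining builders} (or equivalently the $C_k$-builder property) to conclude $C_6$-saturation, and use the cut-vertex observation — $C_4$ is $2$-connected, so every $C_4$ lies entirely within one copy of $B$ — to conclude the glued graph has exactly $2t$ copies of $C_4$. That framing is correct and is exactly what the paper does.

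The gap is that you never actually produce the builder $B$. The entire theorem reduces to the existence of an $11$-vertex $C_6$-saturated graph with exactly two $4$-cycles and a marked vertex $v$ satisfying the self-compatibility condition (for every ordered pair $(u,w)$ of non-$v$ vertices there are $k_1,k_2 > 0$ with $k_1+k_2=5$ and $u$--$v$, $w$--$v$ paths of those lengths). You describe heuristics for how such a $B$ \emph{might} look (a $7$-cycle with chords, the two $C_4$'s sharing an edge near $v$) and say you would search by computer, but you do not exhibit a candidate or verify any of properties (i)--(iii) on it. Since the bound in the statement is entirely certified by this one concrete graph, the proof is not complete without it. The paper fills this gap by drawing the specific $11$-vertex graph explicitly and verifying its builder property and $C_4$-count computationally; any completion of your argument must likewise present and check a concrete $B$.
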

\begin{proof}
The graph 
\begin{center}
\begin{picture}(180,120)
\put(0,60){\circle*{5}}
\put(60,0){\circle*{5}}
\put(60,60){\circle*{5}}
\put(60,120){\circle*{5}}
\put(120,0){\circle*{5}}
\put(120,60){\circle*{5}}
\put(120,120){\circle*{5}}
\put(150,0){\circle*{5}}
\put(180,30){\circle*{5}}
\put(180,60){\circle*{5}}
\put(180,120){\circle*{5}}

\put(125,65){$v$}

\put(0,60){\line(1,0){60}}
\put(0,60){\line(1,1){60}}
\put(60,0){\line(0,1){60}}
\put(60,0){\line(1,0){60}}
\put(60,60){\line(1,1){60}}
\put(60,60){\line(0,1){60}}
\put(60,60){\line(1,0){60}}
\put(120,0){\line(0,1){60}}
\put(120,0){\line(1,0){30}}
\put(120,60){\line(0,1){60}}
\put(120,60){\line(1,0){60}}
\put(120,120){\line(1,0){60}}
\put(150,0){\line(1,1){30}}
\put(180,30){\line(0,1){30}}
\put(180,60){\line(0,1){60}}
\end{picture}
\end{center}
is a $C_6$-builder 
on 11 vertices with exactly 2 copies of 
$C_4$.  
The vertex $v$ is a distinguished vertex.
If $t$ copies of this builder are 
glued together at $v$,  
then we obtain a $C_6$-saturated graph 
on $10t +1$ vertices with $2t$ triangles.  The 
computations that show this graph 
is a $C_6$-builder (and has 2 copies of $C_4$) may be 
found in \cite{mathematica}.
\end{proof}

 \section{Open Problems}\label{section problems}
 We end with some open problems. First, when $H$ and $F$ were both cliques we were not able to determine the function $\sat(n, H, F)$ except when counting triangles in a $K_4$-saturated graph. We believe that the natural construction giving the upper bound in Theorem \ref{small improvement} is correct.
 
 \begin{problem}
 Let $s>r\geq 3$ be integers. Determine the exact value of $\mathrm{sat}(n, K_r, K_s)$.
 \end{problem}
 
One of the most intriguing questions for us was counting triangles in $C_4$-saturated graphs. In Section \ref{k3s in c4-sat} we showed that $\limsup_{n\to\infty} \mathrm{sat}(n, K_3, C_4) \leq \frac{n-1}{2}$, but we could not show that $\liminf_{n\to \infty} \textup{sat}(n,K_3,C_4)$ is positive.

\begin{problem}
Determine if $\textup{sat}(n , K_3 , C_4)$ is positive for infinitely many $n$.  
\end{problem}

We ask the same question when counting copies of $C_4$ in a $C_6$-saturated graph.

\begin{problem}
Determine if $\textup{sat}(n , C_4 , C_6)$ is positive for infinitely many $n$.  
\end{problem}

Finally, we focused on graphs which are either $C_k$-saturated or $K_s$-saturated. It would be interesting to consider other nontrivial combinations of graphs $H$ and $F$, for example when one of them is a tree.

\subsection*{Acknowledgement}
The second author is grateful to Beka Ergemlidze for many helpful discussions.

 \end{document}